\newtheorem{theorem}{Theorem}[section]
\newtheorem{lemm}[theorem]{Lemma}
\def\loc{_1^\circ}
\def\sO{{\mathscr O}}
\def\sC{{\mathscr C}}
\def\sN{{\mathscr N}}
\def\sL{{\mathscr L}}
 \def\sZ{{\mathscr Z}}
\def\sO{\mathscr{O}}
\def\sF{\mathscr{F}}
\def\sR{\mathscr{R}}
\def\sZ{\mathscr{Z}}
\newcommand{\CC}{\mathbb{C}}
\newcommand{\NN}{\mathbb{N}}
\newcommand{\EE}{\mathbb{E}}
\newcommand{\PP}{\mathbb{P}}
\def\P5{{\PP^5}}
\newcommand{\QQ}{\mathbb{Q}}
\newcommand{\RR}{\mathbb{R}}
\newcommand{\ZZ}{\mathbb{Z}}
\def\sK{{\mathscr K}}
\newcommand{\bL}{\mathbf{L}}
\def\Gm{{\bG_m}}
\def\upmo{^{-1}}
\newcommand{\vir}{ {\mathrm{vir}} }
\newcommand{\ev}{ \mathrm{ev} }
\newcommand{\vGa}{\Ga} 
\newcommand{\cal}{\mathcal}
\def\ee{\mathfrak e}
\def\cC{{\cal C}}
\def\cL{{\cal L}}
\def\cM{{\cal M}}
\def\cN{{\cal N}}
\def\cO{{\cal O}}
\def\cW{{\cal W}}
\def\cZ{{\cal Z}}
\def\fB{\mathfrak{B}}
\def\fD{\mathfrak{D}}
\def\fX{\mathfrak{X}}
\def\ff{\mathfrak{f}}
\def\ft{\mathfrak{t}}
\def\v1{{\vec{1}}}
\def\sP{{\mathscr P}}
\newcommand{\Mbar}{\overline{\cM}}
\newcommand{\vd}{\vec{d}}
\def\sta{^\ast}
\def\virt{^{\mathrm{vir}}}
\def\upmo{^{-1}}
\def\sta{^{\ast}}
\def\sta{\sta}
\def\lsta{_{\ast}}
\newcommand{\Si}{\Sigma}
\newcommand{\Ga}{\Gamma}
\newcommand{\lam}{\lambda}
\def\sQ{\mathscr Q}
\def\begeq{\begin{equation}}
\def\endeq{\end{equation}}
\def\and{\quad{\rm and}\quad}
\def\bl{\bigl(}
\def\br{\bigr)}
\def\sub{\subset}
\def\and{\quad\text{and}\quad}
 \DeclareMathOperator{\Aut}{Aut}
 \DeclareMathOperator{\rank}{rank}
\def\lggd{_{g,\gamma,\bd}}
\def\loc{_{\mathrm{loc}}}
\def\ev{\text{ev}}
\def\sta{^\ast}
\def\ti{\tilde}
\def\sO{{\mathscr O}}
\def\sW{{\mathscr W}}
\def\sR{{\mathscr R}}
 \def\ga{{\Gamma}}
\def\beq{\begin{equation}}
\def\eeq{\end{equation}}
\def\Pf{{\PP^4}}
\def\bee{\begin{equation}}
\def\eeq{\end{equation}}
\def\sC{{\mathscr C}}
\def\fA{{\mathfrak A}}
\def\fR{{\mathfrak R}}
\def\bd{{\mathbf d}}
\def\ti{\tilde}
\def\barM{{\overline{M}}}
\def\Gm{T}
\def\mT{{\mathbb T}}
\let\ga=\Ga
\def\lorho{_{^{(1,\rho)}}}
\def\1rho{1_\rho}
\def\virt{^{\vir}}
\def\virtloc{\virt\loc}
\def\sO{{\mathscr O}}
\def\sC{{\mathscr C}}
\def\sN{{\mathscr N}}
\def\sL{{\mathscr L}}
 \def\sZ{{\mathscr Z}}
\def\sO{\mathscr{O}}
\def\sF{\mathscr{F}}
\def\sR{\mathscr{R}}
\def\sZ{\mathscr{Z}}
\def\licolor{\color{cyan}}
\newtheorem{prop}{Proposition}[section]
\newtheorem{theo}[prop]{Theorem}
\newtheorem{coro}[prop]{Corollary}
\newtheorem{rema}[prop]{Remark}
\newtheorem{defi}[prop]{Definition}
\newtheorem{defi-prop}[prop]{Definition-Proposition}
\newtheorem{defi-theo}[prop]{Definition-Theorem}
\begin{document}

\title[genus one]{Genus one GW invariants of quintic threefolds via MSP localization}

\author[Huai-Liang Chang]{Huai-Liang Chang$^1$}
\address{Mathematics Department, Hong Kong University of Science and Technology}
 \email{mahlchang@ust.hk}
\thanks{${}^1$Partially supported by   Hong Kong GRF Grant 6301515}
\author[Shuai Guo]{Shuai Guo$^2$}
\address{School of Mathematical Sciences and Beijing International Center for Mathematical Research, Peking University}
\email{guoshuai@math.pku.edu.cn}
\thanks{${}^2$Partially supported by NSFC grants 11431001 and 11501013}
\author[Wei-Ping Li]{Wei-Ping Li$^3$}
\address{Mathematics Department, Hong Kong University of Science and Technology}
\email{mawpli@ust.hk}
\thanks{${}^3$Partially supported by   Hong Kong GRF Grant 6301515}
\author[Jie Zhou]{Jie Zhou$^4$}
\address{Mathematical Institute, University of Cologne, Weyertal  86-90, 50931 Cologne, Germany}
\email{zhouj@math.uni-koeln.de}
\thanks{${}^4$Partially supported by   German Research Foundation Grant CRC/TRR 191.}


\maketitle

\begin{abstract}  
 The moduli stack of Mixed Spin P-fields (MSP)   provides an effective algorithm  to evaluate all genus 
Gromov-Witten invariants of quintic Calabi-Yau threefolds.
This paper is to apply the algorithm in genus one case. We use the localization formula, the proposed algorithm in \cite{CLLL1, CLLL2}, and Zinger's packaging technique to compute the genus one Gromov-Witten invariants of quintic Calabi-Yau threefolds. 
New hypergeometric series identities are also discovered in the process. 

\end{abstract}

\section{Introduction} 
The genus one Gromov-Witten invariants of Calabi-Yau hypersurfaces were calculated by Zinger in \cite{Zi2}. The method consists of two steps. The first step is the detailed study of the moduli space of genus one stable maps to CY hypersurfaces in \cite{LZ, VZ}. The second step is the  computation part using torus localization and some packaging techniques in \cite{Zi2}.

In this paper, we will use the MSP set-up to calculate the genus one Gromov-Witten invariants for quintic CY hypersurfaces in $\mathbb P^4$. As a by product, we find integral relations for 
certain differential polynomials of $I$ functions (e.g. section \ref{diffrelation}). \black

Mixed-spin $P$-fields (MSP)  are defined in \cite{CLLL1} (see \cite{CLLL3} for a survey). An MSP field   is given  by $$
\xi=(\sC,\Si^\sC,\cL,\cN, \varphi,\rho,\nu)
$$
with   stability conditions,
where $\sC$ is a twisted nodal curve (or an orbifold curve) with markings $\Si^\sC$, $\cL$ and $\cN$ are invertible sheaves on $\sC$, and $\varphi,\rho,\nu$ are sections of $\cL^{\oplus 5}, \cL^{\vee5}\otimes \omega^{\log}_{\sC}, \cL\otimes \cN\oplus \cN$ respectively. For simplicity, let's consider the case without markings. Then $\xi$ has the numerical data $(g, \bd)$ where $g$ is the genus of the curve $\sC$ and $\bd=(d_0, d_\infty)$ with $d_0=\deg(\cL\otimes \cN)$ and $d_\infty=\deg( \cN)$. We use $\cW_{g,\bd}$ to denote the moduli stack of MSP fields with numerical data $(g, \bd)$. The virtual dimension of $\cW_{g,\bd}$ is $d_0+d_\infty-g+1$. 

  MSP moduli stacks can be regarded as a platform interpolating  the moduli of Gromov-Witten theory  and  the moduli of FJRW theory (see \cite{FJR1, FJR2}). Using the $P$-field theory of \cite{CL} and the cosection localization technique of \cite{KL}, a virtual cycle can be defined  and  the properness can be proved in \cite{CLLL1}. The moduli stack admits a $\mathbb C^*$-action. Using this torus action, if the virtual dimension of $\cW_{g,\bd}$ is bigger than zero, the virtual cycle $[\cW_{g,\bd}]^{\virt}$ can give us the identity:
  \begin{eqnarray}
  \label{PI}
  0 =\int_{[\cW_{g,\bd}]\virt} \ft \ c_{\rm top-1}\big(R\pi_{\cW\ast}(\cL_\cW \cN_\cW \cL_\ft)\big),
  \end{eqnarray}
   where $\cL_\cW$ and $\cN_\cW$ are the universal line bundles on the universal curve $\cC$ over the moduli   $\cW:=\cW_{g,\bd}$ with a projection $\pi_\cW\colon\cC\to \cW$,  top is the virtual rank of  $R\pi_{\cW\ast}(\cL_\cW \cN_\cW)$, $\ft$ is the generator of the equivariant cohomology $H_{\mathbb C^*}^*(B\mathbb C^*, \mathbb Q)=\mathbb Q[\ft]$, and $\cL_\ft$ is the trivial line bundle with the weight-$1$ $\mathbb C^*$-action on fibers .
  
  The identify \eqref{PI} provides lots of equations among GW invariants, Hodge integrals, and FJRW invariants. This  will potentially provide an effective algorithm to calculate GW invariants as well as FJRW invariants. In this paper, we will consider the virtual cycles $[\cW_{1,(0, n)}]^{\virt} $ for all $n$ to compute genus one GW invariants of Fermat quintic CY three-folds. 
  
  Comparing with Zinger's work, MSP moduli replaces the detailed study of moduli spaces of stable  maps, especially the complicated issue of the ghost component,  by FJRW invariants.   Since the study of the ghost components for higher genus is very complicated  as it requires separation of components with different $P$-field (ghost) rank (c.f. \cite{CL1}),  \black MSP moduli provides a good way  to work on higher genus GW invariants  (see \cite{MP} for another approach).  For example, our method does not need genus zero two point functions. 
   The $g=1$ package 
  in this paper is expected to be directly extended to higher genus package via MSP moduli.  
    
  The $\mathbb C^*$-action gives four types of graphs from the standard localization procedure, called type A, B, C and D (see Figure \ref{FigABCD}). The contribution from type A graphs involves genus one GW invariants of the quintic, while that from type D graphs involves FJRW invariants. Our packaging uses Zinger's  technique (\cite{Zi2}). We also discover (c.f. Section C.1.2) new hypergeometric series identities in the package of loop type (B) graphs.
  
   The method of quasimaps also provides another way to calculate genus one GW  invariants of CY hypersurfaces \cite{CK1, CK2, KLh}.

  The organization of the paper is as follows. In \S2, we review the
  graphs notations and their contributions in the localization of MSP theory in \cite{CLLL1, CLLL2}. In \S3, we review the mirror theorem for genus zero GW invariants of  quintic threefolds \cite{Gi1, Gi2} (also see \cite{LLY}) and discuss how to transform Givental's set-up to our MSP set-up. In \S4, we define $\sK$-series and $\sP$-series, which are essential for the calculations in later sections. We also calculate some $\sK$-series. In \S5, we  compute the contribution of type A graphs to \eqref{PI}. In \S6, we  compute the contribution of type B graphs to \eqref{PI}. In \S7, we  compute the contribution of type C graphs to \eqref{PI}. In \S8, we  compute the contribution of type D graphs to \eqref{PI}.  Many computations in the last four sections are contained in the Appendix.  
  
  \medskip
  
  {\sl Acknowledgement}. We would  like to thank Professor Jun Li for all  his help and many discussions about the computations. We also thank Professor Melissa Liu for all her help.
      
  Part of the J.~Z.'s work is done while he was a postdoc whose research was supported by
  Perimeter Institute for Theoretical Physics. Research at Perimeter Institute is supported by the Government of Canada through Innovation, Science and Economic Development Canada and by the Province of Ontario through the Ministry of Research, Innovation and Science.
      
  \medskip

Notations:  Throughout the  paper we fix the notations  $T=\CC\sta$, $\ft =-c_1(\sO_{\CC\PP^\infty}(1))$ for $BT=\CC\PP^\infty$, $q=e^t$, and $h\in H^2(\Pf,\ZZ)$ always denotes the hyperplane class.

\black

 \section{MSP theory and localization set-up}
 
 In this section, we will review the localization scheme of the moduli stacks of 
 mixed-spin P-fields   in \cite{CLLL1,CLLL2}.  There are four types of graphs in genus one localization, labeled as type A, B, C and D. 
 
 \subsection{Review of MSP theory}

  The MSP moduli $\cW=\cW_{g,(1,\rho)^n,\bd}$ is constructed in \cite{CLLL1}. It is indexed
  by genus $g$, the number $n$ of markings labelled by $(1,\rho)$, and two integers $(d_0,d_\infty)=\bd$. It is a separated DM stack
  locally of finite type equipped with a cosection $\sigma$ and a 
  $T=\CC\sta$ action. The cosection's zero locus $\cW^-_{g,(1,\rho)^n,\bd}$
  is shown to be proper \cite{CLLL1}  and the torus action gives rise to 
  an equivariant virtual cycle $[\cW\lggd]\virtloc\in A^{\Gm}\lsta (\cW\lggd^-)^T
$. It has a localization formula (for $\delta:=d_0+d_\infty+1-g+n$)
\beq\label{loc-form}[\cW]\virtloc=
\sum_{\ga \black 
} \jmath^-_{\ga\ast}\Bigl( \frac{[\cW_{\Ga}]\virtloc}{e(N_\ga\virt)}\Bigr)\in
A_{\delta}^\Gm (\cW^{-})^T [\ft\upmo],
\eeq
where each connected component of fixed locus $\cW^T$ is indexed by a graph $\ga$ and $\jmath^-$ is the natural inclusion from $\cW_\ga$ to $\cW^-$. Each graph $\ga$ is connected, and has all vertices labelled by $0,1$ or $\infty$, namely $V(\ga)=V_0(\ga)\cup V_1(\ga)\cup V_\infty(\ga)$ according to $\nu_1=0, \nu_1=1=\nu_2$ and $\nu_2=0$ respectively,  and edges 
$E(\ga)=E_0(\ga)\cup E_\infty(\ga)$
where edges in $E_0(\ga)$ connect vertices $V_0(\ga)$ with $V_1(\ga)$, and  
edges in $E_\infty(\ga)$ connect vertices $V_0(\ga)$ with $V_1(\ga)$.
 Denote the set $F(\Ga)$ of flags  to be the set of  $(e,v)\in E(\ga)\times V(\ga)$ such that $e,v$ are adjacent.  The graphs is required to possess decorations as follows:

\begin{itemize}
\item[(a)] (genus)  $g_\cdot: V(\Ga)\to \ZZ_{\geq 0}$; 
\item[(b)] (degree)  $\vd: E(\Ga)\cup V(\Ga)\to \QQ^{\oplus 2}$ via $\vd(a)=(d_{0a},d_{\infty a})$,
where $d_{0a}= \deg \sL\otimes\sN|_{\sC_a}$ and $d_{\infty a}= \deg\sN|_{\sC_a}$  with $\sC_a$ being the curve associated to $a$. 
\item[(c)] (marking)  $S_\cdot: V(\Ga)\to 2^{L(\Ga)}$ via
$v\mapsto S_v\subset L(\Ga)$, where $S_v$ is the subset of markings $\Si^{\sC}_i\in \sC_v$. 
\end{itemize}
 We call a vertex $v\in V(\ga)$ unstable if $g_v=0$, $d_v=\deg(\sL|_{\sC_v})=0$  and $|E_v\cup S_v|
<3$, otherwise stable. Thus $V(\ga)=V^S(\ga)\cup V^{US}(\ga)$. For any unstable vertex $v\in V^{US}(\ga)$,  it must lie in $V^{0,1}$, $V^{1,1}$ or $V^{0,2}$, where
\beq\label{VV0}
V^{a,b} (\Ga):=\{v\in V(\Ga)-V^S(\Ga)\, |\,  |S_v|=a, |E_v|=b\}.
\eeq

Let  $V_c^{a,b}(\Ga)=V_c(\Ga)\cap V^{a,b}(\Ga)$ and 
$\Delta\lggd$ denote the set of all (regular) graphs, up to graph isomorphisms preserving decorations. For each graph $\ga\in\Delta\lggd$ we use $(\ga)$ to denote its
class in $\Delta\lggd/\sim$, where $\sim$ is by automorphisms of graphs.
Then $\cW\lggd^{T}$ is a disjoint union of $\cW_{(\ga)}$, the locus whose curves are associated to the graph $\ga$ (c.f. \cite[Def 2.5]{CLLL2}), over $\ga$'s in $\Delta\lggd$. In \cite{CLLL2} the fixed locus of $\cW\lggd$ decorated by $\ga$ is denoted by $\cW_{\ga}$. In this paper we will no distinguish $\cW_{\ga}$ from $\cW_{(\ga)}$, and we always use $\pi_\ga:\cC_{\cW_\ga}\to\cW_\ga, \cL_\ga$ to denote its universal curve and universal line bundle.

 For   each $v\in V^S(\Ga)$, there is   a moduli stack $\cW_v$ (\cite[Cor. 2.37]{CLLL2}), where
$$\cW_{v}\cong \Mbar_{g_v, E_v\cup S_v}(\PP^4,d_v)^p,\quad
\cW_{v}\cong \Mbar_{g_v,E_v\cup S_v},\quad \cW_v\cong \barM_{g_v,\gamma}^{1/5.5p},$$
when $v\in V^S_0(\vGa), V_1^S(\ga)$ and $V_\infty^S(\ga)$  respectively. Here $\Mbar_{g_v, E_v\cup S_v}(\PP^4,d_v)^p$ is the moduli stack of degree $d_v$ stable maps from genus $g_v$ nodal curves  with markings in $E_v\cup S_v$ to $\PP^4$ with  P-fields studied in \cite{CL}, and  $\barM_{g_v,\gamma}^{1/5.5p}$ is the moduli stack
of genus $g_v$ $\mu_5$-twisted curve with a five-spin structure, markings labeled by $\gamma$ and five $P$-fields studied in \cite{CLL}.
\black

   For $e\in E_0$, there is an MSP moduli $\cW_e=\sqrt[d_e]{\cO_{\PP^4}(1)/\PP^4}$  with $\pi_e: \cW_e\to \PP^4$  the map to its coarse moduli space. Let $h\in H^2(\PP^4;\QQ)$ be the hyperplane class,    $h_e=\pi_e^*h \in H^2(\cW_e;\QQ)$, and   $d_e$ be the degree of $\cL|_{\sC_e}$.

 For unstable $v\in V_0-V_0^S$, we let $[\cW_v]\virtloc=-[Q_5]$, where $Q_5\sub \Pf$ is the Fermat quintic.
\black


\begin{prop} For each graph $\ga$ appearing in localizing $\cW_{g=1,(1,\rho)^n,(d,0)}$,  we have $$[\cW_{(\Ga)}]\virtloc= \frac{1}{|\Aut(\Ga)|} \frac{1}{\prod_{e\in E_0} d_e} 
\frac{1}{\prod_{e\in E_\infty} |5d_e|}
\prod_{v\in V_0\cup V_1^S\cup V_\infty^S} [\cW_v]\virtloc
$$
\end{prop}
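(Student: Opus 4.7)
The plan is to decompose $\cW_{(\Gamma)}$ according to the combinatorial structure of $\Gamma$, following the localization scheme of \cite{CLLL1, CLLL2}, and to reduce $[\cW_{(\Gamma)}]\virtloc$ to vertex virtual cycles multiplied by explicit automorphism/gerbe factors from the edges. Concretely, normalizing the universal curve $\cC_{\cW_\Gamma}\to \cW_\Gamma$ at every $T$-fixed node yields a canonical morphism
$$\prod_{v\in V^S(\Gamma)}\cW_v \;\times_{\mathrm{flags}}\; \prod_{e\in E(\Gamma)}\cW_e \;\longrightarrow\; \cW_\Gamma,$$
where the fiber product is taken with respect to the flag matching at every $(e,v)\in F(\Gamma)$, identifying the evaluation at the flag marking on $\cW_v$ with the restriction of the edge map on $\cW_e$ at the appropriate $T$-fixed point $0$ or $\infty$ of $\PP^1$. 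This morphism has degree $|\Aut(\Gamma)|$, producing the factor $1/|\Aut(\Gamma)|$ in the final formula.

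Next, I would compute the edge contributions. For $e \in E_0(\Gamma)$, the edge moduli $\cW_e = \sqrt[d_e]{\sO_{\PP^4}(1)/\PP^4}$ is a $\mu_{d_e}$-gerbe over $\PP^4$, contributing the factor $1/d_e$ from its stacky automorphism group. For $e \in E_\infty(\Gamma)$, the analogous five-spin analysis incorporating the $\mu_5$-orbifold structure at $\infty$ gives $1/|5d_e|$. These are the only edge contributions: $\cW_e$ has no virtual class beyond its fundamental class, and the normal directions to the $T$-fixed edge maps are absorbed into $e(N_\Gamma\virt)$ rather than $[\cW_\Gamma]\virtloc$.

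Finally, the perfect obstruction theory of $\cW_\Gamma$ splits under the normalization sequence of $\cC_{\cW_\Gamma}\to\cW_\Gamma$, so the cosection-localized virtual cycle factorises as the external product $\prod_{v\in V^S(\Gamma)}[\cW_v]\virtloc$ along the flag-matching fiber product. For an unstable vertex $v\in V^{US}(\Gamma)\cap V_0(\Gamma)$, the flag matching forces the corresponding point of $\PP^4$ to lie on the Fermat quintic $Q_5$, so the convention $[\cW_v]\virtloc = -[Q_5]$ recorded above captures the cosection localization at that node. Combining the $\Aut(\Gamma)$, edge, and vertex contributions yields the stated product formula. The main obstacle is the careful bookkeeping at the flags: one must verify that node-smoothing obstructions and automorphism orders are distributed correctly between the edge factors and the vertex moduli, and in particular that the MSP cosection on $\cW_\Gamma$ restricts on each vertex component to the cosection implicit in the definition of $[\cW_v]\virtloc$, so that the $-[Q_5]$ convention is indeed the correct cosection-localized class at each unstable $V_0$-node.
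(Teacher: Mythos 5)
Your outline is essentially the argument this statement rests on; note, however, that the paper itself offers no proof here -- the Proposition is quoted from \cite{CLLL1,CLLL2} as part of the review of the MSP localization set-up, so the comparison is really against the proof in those references. Your decomposition (normalize $\cC_{\cW_\Gamma}$ at the $T$-fixed nodes, form the flag-matching fiber product of the $\cW_v$ and the root-stack edge moduli $\cW_e$, extract $1/|\Aut(\Gamma)|$ from the covering and $1/d_e$, $1/|5d_e|$ from the gerbe/orbifold structure of the edges, and treat unstable $V_0$ vertices via the $-[Q_5]$ convention coming from the P-field cosection) is the same route taken there, and the bookkeeping you describe is correct. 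The one point I would not let pass as "careful bookkeeping at the flags" is the assertion that the cosection-localized virtual cycle factorises as an external product along the fiber product: this is not a formal consequence of the splitting of the perfect obstruction theory, but requires the virtual localization theorem for cosection-localized classes together with its compatibility with the node-smoothing (normalization) exact sequence, which is precisely the technical content of \cite{CLLL2} and the reason the present paper simply cites it rather than reproving it. With that theorem granted, your sketch is complete and correct.
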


 \black
 
\begin{theo}
	$$
	\frac{1}{e_T(N^\vir_{\vGa})} =\prod_{v\in V(\Ga)}B_v \prod_{e\in E(\Ga)} A_e,
	$$
	where 
	$$
	B_v=\begin{cases}
	A_v\cdot \displaystyle{ \prod_{e\in E_v}\frac{1}{w_{(e,v)}-\psi_{(e,v)}} } , &  v\in V^S;\\
	\displaystyle{ \frac{A_v}{w_{(e,v)}+w_{(e',v)}} }, & v\in V^{0,2}, E_v=\{e,e'\};\\
	A_v, & v\in V^{1,1};\\
	A_v\cdot w_{(e,v)}, & v\in V^{0,1}, E_v=\{e\}.
	\end{cases}
	$$
\end{theo}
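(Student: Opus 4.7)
The plan is to apply the virtual localization formula of Graber--Pandharipande to the $T$-action on $\cW\lggd$, using the node-smoothing decomposition of the virtual tangent complex along the dual graph $\Ga$.

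First I would recall that the $T$-fixed locus indexed by $\Ga$ decomposes, up to finite automorphisms and root gerbes, as a product of the factors $\cW_v$ for $v\in V^S(\Ga)$, one factor for each stable vertex, together with rigid edge pieces (the root gerbes $\sqrt[d_e]{\sO_{\Pf}(1)/\Pf}$ for $e\in E_0$ and the analogous rigid contributions for $e\in E_\infty$). Pulling back the MSP perfect obstruction theory to $\cW_\Ga$ and taking the moving part, one obtains a complex whose cohomology splits into pieces supported on vertex components, edge components, and nodes. This is the standard splitting that underlies virtual localization in GW, FJRW, and their hybrids, and it was already set up for MSP in \cite{CLLL2}.

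Next I would identify the individual pieces and how they combine into $A_e$ and $A_v$. The moving part of the deformation/obstruction of $(\cL,\cN,\varphi,\rho,\nu)$ on an edge component yields the edge factor $A_e$, which is the equivariant Euler class of the moving part of $H^\bu(\sC_e,\cL_e\oplus\cL_e\dual\otimes\omega\ulog\oplus(\cL_e\otimes\cN_e)\oplus\cN_e)$ (after cancellations from the five $\varphi$-sections, the $\rho$-section, and the two $\nu$-sections). The moving part of deformations on a stable vertex component produces $A_v$, the usual vertex-theoretic contribution (hybrid GW/P-field at $V^S_0$, Hodge-type at $V^S_1$, FJRW at $V^S_\infty$). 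From each flag $(e,v)$ with $v\in V^S$ one obtains, via the node-smoothing term in the normalization sequence, a factor $1/(w_{(e,v)}-\psi_{(e,v)})$, explaining the product over $E_v$ in the first case of $B_v$.

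Then I would handle the unstable vertices, which are the main technical nuisance. For $v\in V^{0,2}$, the vertex itself contributes no moduli, but it sits at a node of two incident edges, so the moving part of $T_{n}\sC_v$ at that node is $w_{(e,v)}+w_{(e',v)}$, which appears in the denominator of $B_v$; an analogous bookkeeping with the vanishing of $\varphi,\rho,\nu$ at the node gives the remaining $A_v$ factor. For $v\in V^{1,1}$ one marked point sits on the edge, so no smoothing factor appears and $B_v=A_v$. For $v\in V^{0,1}$, the flag $(e,v)$ is a free end of the edge where no node-smoothing denominator is introduced and, instead, one gains the weight $w_{(e,v)}$ from the trivialization of the line bundles at that end, which yields the factor $A_v\cdot w_{(e,v)}$. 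These formulas are dictated by comparing to the stable-map GW localization of Graber--Pandharipande and the FJRW localization of \cite{CLL}, using that MSP simply enlarges the target by the extra line bundle $\cN$ and the $\nu$-field.

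The main obstacle is the careful combinatorial bookkeeping at the unstable vertices, especially to check that cancellations between the automorphism weights on $\cL$, $\cN$, and $\omega\ulog$ at the node and the weights contributed by the adjacent edges really produce exactly the stated $B_v$ (and not merely an equivalent expression up to a sign or an automorphism factor). Once this node-by-node comparison is verified and combined with the vertex-edge splitting of the virtual complex described above, multiplying over all $v$ and $e$ gives the product formula for $1/e_T(N^\vir_\Ga)$ as claimed.
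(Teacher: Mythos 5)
Your outline follows the standard Graber--Pandharipande virtual localization strategy (splitting the moving part of the virtual tangent complex along the normalization sequence into vertex, edge, and node pieces), which is exactly the route taken in the source this paper is quoting: note that the theorem is stated here as part of a \emph{review} of the localization scheme of \cite{CLLL1, CLLL2}, and no proof is given in the present paper, so the genuine comparison is with the proof in \cite{CLLL2}, which your plan matches in structure.

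That said, as written your proposal is a plan rather than a proof: the actual content of the theorem lies in the explicit weight computations that you defer to ``careful combinatorial bookkeeping,'' namely the verification that the moving parts of $H^\bu$ of $\cL^{\oplus 5}$, $\cL^{\vee 5}\otimes\omega^{\log}$, $\cL\otimes\cN\oplus\cN$ on each edge really assemble into the stated $A_e$, and that the unstable-vertex factors come out exactly as claimed. One point worth correcting in your sketch: for $v\in V^{0,1}$ with $E_v=\{e\}$, the numerator factor $w_{(e,v)}$ does not arise from ``trivialization of the line bundles at that end'' but from the moving part of the infinitesimal automorphisms of the unstable edge component (the vector fields on the rational curve $\sC_e$ vanishing at the node on the other end), which enters $1/e_T(N^\vir_\Ga)$ with the opposite parity to deformations and hence multiplies rather than divides. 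Similarly, for $v\in V^{0,2}$ the denominator $w_{(e,v)}+w_{(e',v)}$ is the weight of the node-smoothing line (the tensor product of the two tangent lines at the node), as you say, but the residual $A_v$ there also absorbs the evaluation of the bundle data at the node via the normalization sequence; these identifications need to be checked against the sign and automorphism conventions of \cite{CLLL2} before the product over $V(\Ga)$ and $E(\Ga)$ can be asserted.
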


The formulae  for  $A_v, A_e$ and $w_{(e, v)}$ are provided below. 

\begin{lemm}\label{wev} Let $v$ and $v'$ be the two vertices of an edge $e$.
\begin{enumerate}
\item When $v\in V_0$, then $w_{(e,v)}=\frac{h_e +\ft}{d_e}$ and $ w_{(e,v')}=-\frac{h_e+\ft}{d_e}$.
\item When $v\in V_\infty\setminus V_\infty^{0,1}$, then $w_{(e,v)}=\frac{\ft}{r_e d_e}$ and $ w_{(e,v')}=-\frac{\ft}{d_e}$.
\end{enumerate}
\end{lemm}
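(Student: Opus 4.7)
The plan is to compute each tangent weight $w_{(e,v)}$ as the $T$-character on the tangent line $T_p\sC_e$ at the node $p=\sC_e\cap\sC_v$ singled out by the flag $(e,v)$. This is a standard Atiyah--Bott type computation for MSP moduli: the inputs are the degree $d_e$ of $\cL|_{\sC_e}$, the orbifold structure of $\sC_e$ at its two nodes, and the $T$-equivariance of the fields $\varphi,\rho,\nu=(\nu_1,\nu_2)$ which is fixed by the convention $t\cdot\nu_1=t\nu_1$, $t\cdot\nu_2=\nu_2$ (with $t$ trivial on $\varphi$ and $\rho$).

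For part (1), the edge $e\in E_0$ connects a vertex $v\in V_0$ to a vertex $v'\in V_1$, and the edge moduli is $\cW_e=\sqrt[d_e]{\sO_{\PP^4}(1)/\PP^4}$; the universal curve $\sC_e$ is an orbifold $\PP^1$ of index $d_e$ at each of its two nodes, and $\cL|_{\sC_e}$ has degree $d_e$. The first step is to identify the $T$-characters of $(\cL\otimes\cN)$ at the two nodes from the equivariant sections: since $\nu_1$ is a $T$-equivariant section of weight $\ft$ that vanishes only at the $V_0$-side node $p$, the fiber $(\cL\otimes\cN)|_p$ carries character $\ft$ while the fiber at $p'$ is trivial. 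Combining this with the pullback $h_e=\pi_e^*h$ for the $\PP^4$-direction of the map $\sC_e\to\PP^4$ induced by $\varphi$, and then dividing by $d_e$ to account for the $d_e$-th root structure at the node, one reads off $w_{(e,v)}=(h_e+\ft)/d_e$. The relation $w_{(e,v)}+w_{(e,v')}=0$ for an orbifold $\PP^1$ under the rotation $T$-action then forces $w_{(e,v')}=-(h_e+\ft)/d_e$.

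For part (2), the edge $e\in E_\infty$ connects $v\in V_\infty$ to $v'\in V_1$, and $\sC_e$ now carries an additional $\mu_5$-structure from the spin side, so that the orbifold index at the $V_\infty$-node is $5d_e=r_ed_e$ while at the $V_1$-side it remains $d_e$. Here $\deg\cL<0$, so $\varphi$ vanishes and $\PP^4$ drops out, but $\cN$ is nontrivial and $\nu_2$ is an equivariant section of $T$-weight $0$; the same weight reading, now with the asymmetric root indices at the two ends, yields $\ft/(r_ed_e)$ at $v$ and $-\ft/d_e$ at $v'$, and the $h_e$ contribution is absent. The hypothesis $v\notin V_\infty^{0,1}$ removes the unstable rational tail at the $V_\infty$-side where the above setup would have to be modified. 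The main obstacle throughout is purely bookkeeping: one must keep straight the $T$-character conventions on $\varphi,\rho,\nu_1,\nu_2$, the orbifold indices at each end of $\sC_e$, and the relation between $\deg\cL$, $\deg\cN$ and the $T$-characters of $\cL\otimes\cN$ at the two nodes. Once these ingredients are correctly in place, as already carried out in \cite{CLLL2}, the formulas drop out directly.
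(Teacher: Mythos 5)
The paper does not actually prove Lemma \ref{wev}: Section 2 is an explicit review of the localization package of \cite{CLLL1,CLLL2}, and this lemma is quoted from \cite{CLLL2} without argument. So there is no in-paper proof to compare against; your proposal should be judged against the computation in the cited source, and on that score your approach is the right one and is essentially the same as what \cite{CLLL2} carries out: identify $w_{(e,v)}$ with the $T$-character of the tangent line of $\sC_e$ at the node determined by the flag, read off the equivariant characters of $\cL\otimes\cN$ (resp.\ $\cN$) at the two ends from the weight-one section $\nu_1$ (resp.\ the weight-zero section $\nu_2$) together with the $h_e$-twist coming from $\varphi$, divide by the degree $d_e$, and insert the extra factor $1/r_e$ at the $\mu_{r_e}$-stacky node on the $V_\infty$ side.

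Two points of bookkeeping in your write-up deserve care, though neither derails the argument. First, in part (1) the equivariance of $\nu_1$ pins down the character of $\cL\otimes\cN$ at the node where $\nu_1$ is \emph{nonvanishing} (the $V_1$-end $p'$), not at the $V_0$-end where it vanishes; the character at $p$ is then forced by the degree and the $h_e$-twist, and the difference of the two characters divided by $d_e$ gives $w_{(e,v)}=(h_e+\ft)/d_e$. As stated, your sentence assigns the characters to the wrong ends, although the difference (hence the weight) comes out the same. Second, in part (2) the phrase ``the orbifold index at the $V_\infty$-node is $5d_e=r_ed_e$'' conflates two different quantities: the isotropy order at that node is $r_e$ (here $5$), and it is the tangent weight of the coarse edge, $\ft/d_e$ up to sign, that gets divided by $r_e$ to produce $\ft/(r_ed_e)$; the number $5d_e$ plays a role only in the automorphism factor $|5d_e|$ and in the edge contribution $A_e$, not as an isotropy order. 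Finally, since your last sentence defers the actual verification to \cite{CLLL2}, the proposal is a correct plan rather than a self-contained proof --- which is an acceptable standard here, given that the paper itself only cites the result.
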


\begin{lemm}[Contribution from  edges]\label{lem:all-edges} For the two cases $(1)$\, $e\in E_0$, (2)\, $e\in E_\infty,\ (e,v)\in F,\ v\in V_\infty \setminus V_\infty^{0,1}$,
we have respectively
$$ A_e=\displaystyle{ \frac{\prod\limits_{j=1}^{5d_e-1}(-5 h_e +\frac{j(h_e+\ft)}{d_e})}{
			\prod\limits_{j=1}^{d_e}(h_e-\frac{j(h_e+\ft)}{d_e})^5 \cdot \prod\limits_{j=1}^{d_e}\frac{j(h_e+\ft)}{d_e} }}, \qquad \qquad
		\displaystyle{\frac{\prod\limits_{j=1}^{\lceil-d_e\rceil-1}(-\ft-\frac{j\ft}{d_e} )^5 }{
			\prod\limits_{j=1}^{{-5d_e}}(-\frac{j\ft}{d_e}) 
			\prod\limits_{j=1}^{\lfloor-d_e \rfloor}(\frac{j\ft}{d_e})}
	}. 
		$$

 	\end{lemm}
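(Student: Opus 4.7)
The plan is to compute $A_e = 1/e_T(N^{\vir}_e)$ directly by applying the Atiyah--Bott equivariant localization formula to the edge component $\cW_e$. First I would restrict the MSP deformation/obstruction complex to the universal rational curve $\sC_e \to \cW_e$. Away from the node-smoothing terms at the two endpoints of $e$ (which are absorbed into the vertex factors $B_v$), this complex is the derived pushforward $R^\bullet\pi_{e\ast}$ of the direct sum
\[
\cL^{\oplus 5} \;\oplus\; \cL^{\vee 5}\otimes\omega^{\log}_{\sC_e} \;\oplus\; \cL\otimes\cN \;\oplus\; \cN,
\]
corresponding to deformations of the map to $\PP^4$, of the $P$-field $\rho$, and of the sections $\nu_1,\nu_2$ respectively. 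The edge contribution $A_e$ is then the reciprocal of the $T$-equivariant Euler class of the $T$-moving part of this complex.

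By Lemma~\ref{wev} the $T$-tangent weights of $\sC_e$ at its two fixed points are known explicitly, and this determines the $T$-character of $H^\bullet(\sC_e,\sL)$ for every line bundle $\sL$ on $\sC_e$ via the Atiyah--Bott fixed-point formula: sections decompose into one-dimensional weight spaces whose weights form arithmetic progressions indexed by (possibly fractional) $j$. The hypergeometric-type products in the statement are exactly the Euler-class products of these weight decompositions.

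For case (1) $e\in E_0$, the edge curve $\sC_e$ is a degree-$d_e$ cover of a line in $\PP^4$ with $\cL|_{\sC_e}=\sO(d_e)$ and $\cN|_{\sC_e}=\sO$. The five copies of $\cL$ in $H^0$ (from the $\varphi$-direction of map deformations) give the denominator factor $\prod_{j=1}^{d_e}(h_e-j(h_e+\ft)/d_e)^{5}$; $H^1(\cL^{\vee 5}\otimes\omega^{\log})$ contributes the numerator $\prod_{j=1}^{5d_e-1}(-5h_e+j(h_e+\ft)/d_e)$; and $H^0(\cL\otimes\cN)$ (the $\nu_1$-direction) gives the remaining denominator factor $\prod_{j=1}^{d_e}j(h_e+\ft)/d_e$. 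For case (2) $e\in E_\infty$, $\cL|_{\sC_e}$ has negative orbifold degree $d_e$ with a $\mu_5$-twist at the $V_\infty$-end; the $H^0$-contributions vanish and $H^1$ takes over, with $H^1(\cL^{\oplus 5})$ producing the numerator $\prod_{j=1}^{\lceil -d_e\rceil-1}(-\ft-j\ft/d_e)^5$ and $H^1(\cL\otimes\cN)\oplus H^1(\cN)$ producing the denominator.

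The main obstacle is the $\mu_5$-orbifold bookkeeping in case (2): because one endpoint of $\sC_e$ carries a $\mu_5$-isotropy coming from the FJRW-type vertex, sections of $\cL$ decompose into isotypic components and only the $\mu_5$-invariant characters contribute to $H^\bullet$. This is exactly what forces $\lceil-d_e\rceil$ and $\lfloor-d_e\rfloor$ to appear in place of $-d_e$. Once this combinatorics is sorted out, both formulas follow from routine Bott-residue calculations, entirely parallel to the edge computations already carried out in \cite{CLLL1, CLLL2}.
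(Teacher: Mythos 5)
Your overall strategy --- restricting the MSP deformation--obstruction complex to the edge curve, decomposing $H^0$ and $H^1$ into $T$-weight spaces via Atiyah--Bott, and handling the $\mu_5$-isotropy by keeping only invariant characters (which is indeed what produces $\lceil -d_e\rceil$ and $\lfloor -d_e\rfloor$) --- is the standard and correct route, and it is the one taken in \cite{CLLL1,CLLL2}, which the present paper simply cites for this lemma. Your accounting for case (1) is consistent: $H^0(\cL)^{\oplus 5}$ and $H^0(\cL\otimes\cN)$ (moving parts) in the denominator, $H^1(\cL^{\vee 5}\otimes\omega^{\log}_{\sC_e})$ in the numerator.

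However, your accounting for case (2) contains a genuine error that would prevent the computation from reproducing the stated formula. You attribute the denominator to ``$H^1(\cL\otimes\cN)\oplus H^1(\cN)$,'' but this is wrong on three counts. First, in $A_e=1/e_T(N^\vir)$ the $H^1$ (obstruction) weights always land in the \emph{numerator}, as you yourself use in case (1); a denominator factor must come from an $H^0$. Second, on an edge $e\in E_\infty$ the section $\nu_1$ is nowhere vanishing, so $\cL\otimes\cN$ is trivial on $\sC_e$ and contributes nothing moving, while $\cN\cong\cL^{\vee}$ has \emph{positive} degree $-d_e$, so $H^1(\cN)=0$ and it is the moving part of $H^0(\cN)$, of dimension $\lfloor -d_e\rfloor$, that yields the factor $\prod_{j=1}^{\lfloor -d_e\rfloor}(j\ft/d_e)$. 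Third, and most importantly, your list omits the $\rho$-field direction entirely in case (2): the factor $\prod_{j=1}^{-5d_e}(-j\ft/d_e)$ has exactly $-5d_e$ terms and comes from the moving part of $H^0(\cL^{\vee 5}\otimes\omega^{\log}_{\sC_e})$, a line bundle of degree $-5d_e>0$ whose one fixed weight is absorbed by the linearization along the nonvanishing $\rho$. Your blanket claim that ``the $H^0$-contributions vanish and $H^1$ takes over'' is true only for $\cL$ itself (whence the fifth-power numerator), not for $\cL^{\vee5}\otimes\omega^{\log}_{\sC_e}$ or $\cN$. Until the case-(2) bookkeeping is corrected along these lines, the Bott-residue computation will not match the right-hand side of the lemma.
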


\begin{lemm}[Contribution from unstable  vertices]\label{lem:unstable-vertex}
	If $v\in V^{0,2}$, then
	$$
	A_v= \begin{cases}
	h_e +\ft = h_{e'}+\ft, & v\in V_0^{0,2}\ \text{and}\ E_v=\{e,e'\},\\
	-5\ft^6, & v\in V_1^{0,2}, 
	\end{cases} 
	$$

	If $v\in V_1^{1,1},\ S_v\subset \Si^{\sC}\lorho$, then we have 
	$
	A_v= 
 	5\ft. 
	$
	If $v\in V_0^{1,1}$, we have  $A_v=1$. 

\end{lemm}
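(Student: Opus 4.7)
The plan is to compute each $A_v$ by a direct local analysis at the unstable vertex $v$. Since every unstable $v$ is either a single node (for $v \in V^{0,2}$) or a marked point joined to an edge (for $v \in V^{1,1}$), the component $\sC_v$ carries no intrinsic moduli and $\cW_v$ is a point. The factor $B_v$ appearing in the Theorem therefore decomposes into a denominator already displayed in that formula (node-smoothing weight $w_{(e,v)}+w_{(e',v)}$ or a $w-\psi$ from a marking) and a numerator $A_v$ equal to the $T$-equivariant Euler class of the fixed part of the obstruction complex at $v$. The weights on $\cL|_v$ and $\cN|_v$ are determined by the vertex label ($v \in V_0, V_1,$ or $V_\infty$) in accordance with Lemma \ref{wev}, and which of the sections $\varphi,\rho,\nu_1,\nu_2$ are forced to vanish or be non-vanishing at $v$ is likewise fixed by the labels of the adjacent edges.

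The general recipe I would follow is to use the normalization exact sequence for the MSP deformation complex at $v$: one relates $\Def$ and $\Obs$ at $v$ to the fibers at $v$ of $\cL^{\oplus 5}$, $\cL^{\vee 5}\otimes \omega^{\log}_{\sC}$, $\cL\otimes\cN$, and $\cN$, modulo the already-extracted smoothing or marking directions. For $v \in V_0^{0,2}$ the two edges $e,e'$ lie in $E_0$ and carry Gromov-Witten-type data ($\varphi \ne 0$, $\rho=\nu=0$); by Lemma \ref{wev}(1) the node-smoothing weight is separated off, and the unique remaining fixed obstruction comes from the gluing of $\nu_1 \in H^0(\cL\otimes\cN)$ across the node. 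Its $T$-weight at $v$ is $h_e+\ft$, giving $A_v=h_e+\ft=h_{e'}+\ft$ by node compatibility.

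For $v \in V_1^{0,2}$ the vertex sits on a middle component on which $\nu_1,\nu_2$ are both non-vanishing; a case-by-case computation of the $T$-weights on the fibers of the four MSP bundles at $v$, combined with the cosection-localized contribution from the five $P$-fields, produces five factors of $\ft$ from $\cL^{\oplus 5}|_v$ and one factor of $\ft$ from $\cN|_v$, together with a sign and a combinatorial factor of $5$ coming from the cosection structure, yielding $A_v=-5\ft^6$. For $v \in V_1^{1,1}$ with $S_v \subset \Si^{\sC}\lorho$ the marking lies on a $\mu_5$-gerbe; the fixed obstruction at the marking contributes $\ft$ while the orbifold stabilizer contributes an overall factor $5$, giving $A_v=5\ft$. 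For $v \in V_0^{1,1}$ the marking and its edge are of pure Gromov-Witten type, so no fixed obstruction survives beyond what is already captured in the $w-\psi$ denominator of $B_v$, giving $A_v=1$.

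The main obstacle is the case $v\in V_1^{0,2}$: there one must carefully track the signs and combinatorial multiplicities arising from the cosection-localized virtual class across a node joining two qualitatively different MSP field configurations. The precise separation of fixed and moving parts of the obstruction theory at such a node requires explicit knowledge of the gluing data for all four bundles $\cL^{\oplus 5}$, $\cL^{\vee 5}\otimes\omega^{\log}_{\sC}$, $\cL\otimes\cN$, and $\cN$, and is the technical heart of the computation.
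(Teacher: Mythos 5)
Your proposal cannot be matched against an argument in this paper: Lemma \ref{lem:unstable-vertex} is stated here without proof, as part of the review of the localization package imported from \cite{CLLL1,CLLL2}. What you have written is an outline of the first-principles computation that is actually carried out in \cite{CLLL2}, and as an outline it points in the right direction (normalization sequence at the node, separation of the fixed part of the MSP obstruction complex from the smoothing/marking weights already displayed in $B_v$, weight bookkeeping on the fibers of $\cL^{\oplus 5}$, $\cL^{\vee 5}\otimes\omega^{\log}_{\sC}$, $\cL\otimes\cN$ and $\cN$). However, as a proof it has two genuine defects.

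First, the decisive case $v\in V_1^{0,2}$ is precisely the one you defer: you assert that ``five factors of $\ft$ \ldots one factor of $\ft$ \ldots a sign and a combinatorial factor of $5$'' combine to $-5\ft^6$, but you never exhibit the weight decomposition that produces this, and you yourself label it the technical heart of the computation. A correct and self-contained route, entirely inside the paper's own formulas, is to observe that the unstable values must be the $g_v=0$, $\EE_v=0$ degenerations of the stable-vertex formula \eqref{eqn:Av}: there $\bigl(\frac{1}{-\ft}\bigr)^5\cdot 5\ft\cdot(-\ft^5)^{|E_v|}$ gives $-5\ft^6$ for $|E_v|=2$ and $5\ft$ for $|E_v|=1$, matching the lemma. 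Your write-up does not make this identification, and without it the sign and the factor of $5$ are unaccounted for.

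Second, your justification of $A_v=5\ft$ for $v\in V_1^{1,1}$ is wrong: you attribute the factor $5$ to a $\mu_5$-gerbe stabilizer at the marking. In the MSP setup the $\mu_5$-twisted structure lives on the $V_\infty$ side (the FJRW moduli $\barM^{1/5,5p}_{g_v,\gamma}$); the $(1,\rho)$-markings on $V_1$ components are ordinary scheme points, and the $5$ comes from the weight-$5$ direction $\cL^{\vee 5}\otimes\omega^{\log}_{\sC}$, i.e.\ the factor $\frac{5\ft}{e_T(\EE_v\otimes\bL_5)}$ in \eqref{eqn:Av} specialized to $g_v=0$. The final answer is right, but the mechanism you give for it is not, which matters if one tries to extend the computation to other decorations.
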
 
 
Let's introduce some notations. 
\begin{itemize}
\item Given a stable vertex $v\in V^S$, let $\pi_v: \cC_v\to \cW_v$ be the universal curve,  $\cL_v$  be
the universal line bundle over $\cC_v$.
\item Given $v\in V^S_1$, let $\EE_v:= \pi_*\omega_{\pi_v}$ be the Hodge bundle, where
$\omega_{\pi_v}\to \cC_v$ is the relative dualizing sheaf. Then $\EE^\vee_v=R^1\pi_{v*}\cO_{\cC_v}$.
\item Let $\bL_k$ be the one-dimensional weight $k$ $T$-representation.
\end{itemize}

\begin{lemm}[Contribution from all stable vertices]\label{lem:stable-vertex}
 For $v\in V^S$, define
\begin{equation}\label{eqn:Av}
	\begin{aligned}
		 A_v:= \begin{cases}
			\displaystyle{ \frac{1}{e_T(R\pi_{v*} {\phi}_v^* \cO_{\PP^4}(1)\otimes \bL_1)}
				\prod_{e\in E_v}(h_e+\ft) },
			& v\in V^S_0; \\
			\displaystyle{\Big(\frac{e_T(\EE_v^\vee \otimes \bL_{-1})}{-\ft}\Big)^5
				\cdot \frac{5\ft}{e_T(\EE_v\otimes \bL_5)}\cdot (-\ft^5)^{|E_v|}\cdot \big(\frac{-\ft^4}{5}\big)^{|S_v^{(1,\varphi)}|}  , } 
			& v\in V_1^S;\\
			\displaystyle{ \frac{1}{e_T(R\pi_{v*}\cL_v^\vee \otimes \bL_{-1})} 
				 }, & v\in V^S_\infty.
		\end{cases}
	\end{aligned}
\end{equation}
\end{lemm}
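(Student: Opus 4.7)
The plan is to derive each case of \eqref{eqn:Av} from the $T$-equivariant perfect obstruction theory of $\cW = \cW_{g,(1,\rho)^n,\bd}$ restricted to the fixed locus $\cW_v$, isolating the $T$-moving part that contributes to $e_T(N^\vir_\Ga)$. Recall that the MSP obstruction theory is built from the deformation of the underlying twisted curve $\sC$ together with the pairs $(\cL,\varphi)$, $(\cL^{\vee 5}\otimes\omega^{\log}_{\sC},\rho)$ and $(\cN,\cL\otimes\cN\oplus\cN)$-sections encoding $\nu=(\nu_1,\nu_2)$. At a $T$-fixed point the curve decomposes into components indexed by $V(\Ga)$ joined by rational bridges indexed by $E(\Ga)$, and the pullbacks of $(\cL,\cN)$ to $\sC_v$ acquire definite $T$-weights determined by the vertex label $0,1$ or $\infty$. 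First I would write the tangent--obstruction two-term complex as the alternating sum of the pushforwards $R\pi_{v\ast}\bigl(\cL_v^{\oplus 5}\otimes\bL_1\bigr)$, $R\pi_{v\ast}\bigl(\cL_v^{\vee 5}\otimes\omega^{\log}_{\pi_v}\otimes\bL_{-5}\bigr)$, and the two $\cN$-summands $R\pi_{v\ast}\bigl(\cL_v\otimes\cN_v\otimes\bL_?\bigr)$, $R\pi_{v\ast}\bigl(\cN_v\otimes\bL_?\bigr)$, with weights dictated by the $T$-action $t\cdot(u_1,u_2)=(tu_1,u_2)$ on the two sections $\nu$.

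For $v\in V^S_0$, the restriction forces $\cN|_{\sC_v}\cong\sO$, $\rho|_{\sC_v}=0$, and $\varphi$ makes $\sC_v$ a stable map to $\PP^4$ with $P$-field; the $\cN$--sections contribute only fixed obstructions that cancel against fixed deformations, so the moving part is $R\pi_{v\ast}\phi_v^\ast\sO_{\PP^4}(1)\otimes\bL_1$ together with the smoothing-the-node classes $w_{(e,v)}-\psi_{(e,v)}$ at each $e\in E_v$; absorbing these smoothing weights into $B_v$ via Lemma \ref{wev} produces the stated factor $\prod_{e\in E_v}(h_e+\ft)$, because the node direction at $v\in V_0$ carries weight $(h_e+\ft)/d_e$. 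For $v\in V^S_\infty$, a parallel argument gives $\cL_v\otimes\cN_v\cong\sO$ (so $\cL_v\cong\cN_v^\vee$ is a $5$-spin structure) and only $\rho$ remains, yielding the single factor $1/e_T(R\pi_{v\ast}\cL_v^\vee\otimes\bL_{-1})$; again the $(1,\varphi)$-markings and edge incidences are accounted for in $B_v$.

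The case $v\in V^S_1$ is the main obstacle because both line bundles $\cL_v,\cN_v$ restrict to trivial bundles but now carry \emph{opposite} nontrivial $T$-weights, so all four obstruction summands contribute. For the five copies of $(\cL,\varphi)$ with weight $\bL_1$, $R\pi_{v\ast}\sO\otimes\bL_1$ has $H^0=\bL_1$ (giving $-\ft$) and $H^1=\EE_v^\vee\otimes\bL_{-1}$ (giving the factor $(e_T(\EE_v^\vee\otimes\bL_{-1})/(-\ft))^5$). For the $(\rho)$-factor with weight $\bL_{-5}$, the same computation on $\omega_{\pi_v}$ produces $5\ft/e_T(\EE_v\otimes\bL_5)$ after twisting by the log-divisor contributions. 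The edge contributions $(-\ft^5)^{|E_v|}$ come from deforming the node where the bridge (with weights $\frac{\ft}{r_e d_e}$ on one side) attaches, and the marking factor $(-\ft^4/5)^{|S_v^{(1,\varphi)}|}$ comes from the $(1,\varphi)$-markings which impose a $\mufive$-orbifold condition. I would verify this last piece by a direct local computation on a $\mufive$-twisted marked point carrying weight-$1$ line bundle data, which is where careful bookkeeping of the orbifold structure (a factor $1/5$) versus the equivariant weights (contributing $\ft^4$) is essential.

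Finally I would assemble the three formulas and check that summing $B_v = A_v\cdot\prod_{e\in E_v}(w_{(e,v)}-\psi_{(e,v)})^{-1}$ for stable vertices, together with the explicit $A_v$ for unstable vertices (Lemma \ref{lem:unstable-vertex}) and the edge contributions $A_e$ (Lemma \ref{lem:all-edges}), reproduces $1/e_T(N^\vir_\Ga)$. The consistency check is the standard node-splitting exact sequence $0\to\sO_{\sC}\to\bigoplus_{v}\sO_{\sC_v}\oplus\bigoplus_e\sO_{\sC_e}\to\bigoplus_{\text{nodes}}\sO_p\to 0$ tensored with the universal line bundles, which expresses the global $R\pi_\ast$ as an alternating sum of vertex and edge pieces minus node corrections; the latter are exactly $w_{(e,v)}+w_{(e',v)}$-denominators distributed between $B_v$ and $A_e$ as in the statement. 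Expect the $V^S_1$ bookkeeping of the five Hodge contributions against the $\bL_5$-twisted $(\rho)$-contribution, and the correct power of $\ft$ at orbifold markings, to be the only nontrivial verification.
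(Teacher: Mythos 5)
First, note that the paper does not prove this statement at all: Section 2 is an explicit review of the localization scheme of \cite{CLLL1,CLLL2}, and Lemma \ref{lem:stable-vertex} is quoted from there as a definition of $A_v$ whose justification (that $\prod_v B_v\prod_e A_e$ really equals $1/e_T(N^\vir_\Ga)$) lives in \cite{CLLL2}. Your overall strategy --- decompose the $T$-equivariant obstruction theory via the normalization sequence into vertex, edge and node pieces, and read off the moving part at each type of vertex --- is the standard Graber--Pandharipande route and is the right one.

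However, in a statement whose entire content is the equivariant weights, your bookkeeping has concrete errors. At $v\in V_1^S$ you assert the five $(\cL,\varphi)$-summands carry weight $\bL_1$, yet then write $H^1=\EE_v^\vee\otimes\bL_{-1}$ and $e_T(H^0)=-\ft$; these only follow if the weight is $\bL_{-1}$ (which is what the target formula $\bigl(e_T(\EE_v^\vee\otimes\bL_{-1})/(-\ft)\bigr)^5$ requires), so as written the $V_1^S$ case is internally inconsistent and you are reverse-engineering the answer rather than deriving it. Second, you attribute the numerator factors $\prod_{e\in E_v}(h_e+\ft)$ (at $V_0^S$) and $(-\ft^5)^{|E_v|}$ (at $V_1^S$) to ``smoothing the node,'' but the node-smoothing deformation is precisely the separate factor $\prod_{e\in E_v}(w_{(e,v)}-\psi_{(e,v)})^{-1}$ already placed in $B_v$ by Theorem 2.3; the numerator factors instead arise from the node-evaluation terms of the normalization sequence applied to the moving bundles (the fiber of $\cL\cN\bL_1$ at a node over $V_0$ has weight $h_e+\ft$, and the fibers of $\cL^{\oplus 5}\otimes\bL_{-1}$ at a node over $V_1$ contribute $(-\ft)^5$). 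Until the weights of the four summands of the obstruction theory at each vertex type are fixed consistently from the actual $T$-action on $(\varphi,\rho,\nu)$, rather than inferred from the answer, the argument does not establish the formula.
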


\subsection{Types A, B, C and D} When the genus is one, we can divide the type of regular graphs associated to $T$-fixed points of $\cW$ into four types A, B, C and D. They are given as follows.

 Type A: This graph has one stable vertex $v\in V_0$ with genus $g_v=1$.  
 
 Type B: This graph has a loop with $\sC_0$ consisting of possibly many rational curves and/or points and $\sC_1$ being  rational curves and/or  points. $\sC_\infty$ is empty. Here $\sC_0=\sC\cap (\nu_1=0)$ and similarly for $\sC_1$ and $\sC_\infty$. 
 
 Type C: This graph has one stable vertex $v\in V_1$ with genus $g_v=1$. 
 
Type D: This graph has one stable vertex $v\in V_\infty$ with genus $g_v=1$. All edges $e$ in $E_{\infty}$  has $\deg(\cL|_{\sC_e})=-\displaystyle\frac{1}{5}$.

\label{FigABCD}\begin{center}
\begin{picture}(100,30)

\put(-11,24){\circle{5}}
 \put(-11,22){\line(0,-1){16}}
\put(-11,5){\circle*{1}}

\put(2,24){\circle{5}}
 \put(2,22){\line(0,-1){16}}
\put(2,5){\circle*{1}}

\put(-5, 15){$\cdots$}

 \put(-15,5){\line(1,0){17}}

\put(-18, 5){$v$} 
\put(-9,7){$g_v=1$}

    \put (-11, -2){Graph A}


\put(14,23){\circle*{1}}

 \put(14,22){\line(0,-1){17}}

\put(27,23){\circle*{1}}
 \put(27,22){\line(0,-1){17}}

\put(12,5){\line(1,0){17}}
\put(19.5, 5.5){$v$}

\put(39,23){\circle*{1}}
\put(44,18){$\cdots$}
 \put(39,22){\line(0,-1){15}}
\put(39,5){\circle{5}}

\put(52,23){\circle*{1}}
 \put(52,22){\line(0,-1){15}}
\put(52,5){\circle{5}}

 \put(15,23){\line(1,0){40}}

\put(28, 25){$\ti v$}
\put(32,25){$g_{\ti v}=0$}

 \put(15,23){\line(1,0){40}}

\put(15,8){$g_{v}=0$}

\put(15, -2){Graph B}

\put(69,23){\circle*{1}}
\put(74,18){$\cdots$}
 \put(69,22){\line(0,-1){15}}
\put(69,5){\circle{5}}

\put(82,23){\circle*{1}}
 \put(82,22){\line(0,-1){15}}
\put(82,5){\circle{5}}

 \put(65,23){\line(1,0){20}}

\put(62, 22){$v$} 
\put(70,25){$g_v=1$}

\put(70,-3){Graph C}

\put(99,23){\circle*{1}}
\put(104,18){$\cdots$}
 \put(99,22){\line(0,-1){15}}
\put(93,15){$-\frac{1}{5}$}
\put(99,5){\circle{5}}

\put(112,23){\circle*{1}}
 \put(112,22){\line(0,-1){15}}
\put(106,15){$-\frac{1}{5}$}
\put(112,5){\circle{5}}

 \put(95,23){\line(1,0){20}}

\put(92, 22){$v$} 
\put(100,25){$g_v=1$}

    \put(100,-3){Graph D}
    
\end{picture}
\end{center}       
\bigskip

  Each   circle represents    all possible graphs (with one marking) for  genus zero and $d_\infty=0$ MSP moduli, where the marking lies on $V_0$ or $V_1$ accordingly. \black
 
\section{Equivariant Mirror Theorem}

\subsection{Equivariant Mirror Theorem} 


	
	
	
   Let $G=\CC\sta$ act on $\P5$ as follows:
	 $t\cdot [x_0,\ldots, x_{5}]= [x_0,\ldots, tx_{5}].$ 
Over
 $$
H\sta_{G}(\P5;\QQ)=\QQ[p,\ft]/\langle p^{5}(p+\ft)\rangle
$$
  there is a  $G$-equivariant pairing  given by $( p^i, p^j) = 
(-\ft)^{i+j-5}$ if  $i+j\geq 5$ and $0$ otherwise.  The ordered basis
$$
(\mT^0,\mT^1,\mT^2,\mT^3,\mT^4,\mT^5):=(p^4(p+\ft), p^3(p+\ft), p^2(p+\ft), p(p+\ft), (p+\ft), 1)
$$
is dual to the ordered basis $$(\mT_0,\mT_1,\mT_2,\mT_3,\mT_4,\mT_5):=(1,p,p^2,p^3,p^4,p^5)$$ with respect to the $G$-equivariant Poincar\'{e} pairing,  namely $(\mT^i,\mT_j)=\delta_{i,j}$.

Let $ G'=\CC\sta$ acts on $\P5$ trivially. Consider the $G\times G'$ equivariant bundle over $\P5$
\beq\label{defV} V=L_H^{\otimes 5}\oplus (L_H\otimes L_{\ft}\otimes L_{\ft'}),\eeq
 where $\ft'=c_1\big(\sO_{BG'}(-1)\big)$ (with $BG'\cong \CC\mathbb P^\infty$) denotes the first Chern class of $L_{\ft'}$ and similarly for $\ft$ with the group $G'$ replaced by $G$. Let  $(\pi_d,\ff_d):\cC_d\to \barM_{0,1}(\P5,d)\times \P5$ be the universal family and $E_d=\pi_{d\ast}\ff_d^* V$.


  Recall the conventions $q=e^t$,
 $  f(q) =\sum_{d=0}^\infty \frac{(5d)!}{(d!)^5} q^d =I_0(t), \  g_l(q)=\sum_{d=1}^\infty q^d \frac{(5d)!}{(d!)^5}\Big(\sum_{m=1}^{ld}\frac{1}{m}\Big), $
$$\  I_1(t):=5[g_5-g_1](q)+tI_0(t),  \quad T(t):=I_1(t)/I_0(t).$$
We set 
$
T_0 
= \hbar \log f +\ft'\frac{g_1}{f}.
$  \black
 Then 
the equivariant mirror theorem \cite{Gi2} states that

 $$ 
I(q)= q^{  p/\hbar} 5p(p+\ft+\ft') \sum_{d=0}^\infty q^d
\frac{ \prod_{m=1}^{5d}(5 p +m\hbar)\prod_{m=1}^d(p+\ft+\ft'+m\hbar)}
{\prod_{m=1}^d(p+ m\hbar)^5(p+\ft+m\hbar)} 
 $$
 is equal to, under the mirror map $\sQ(q)=e^{I_1(t)/I_0(t)}=e^T$,
 \begin{eqnarray*}J(\sQ)&=& e^{(T_0+p\log \sQ)/\hbar} \bigg( 5p(p+\ft+\ft') 
+\sum_{d=1}^\infty \sQ^d   \sum_{k=0}^{5} \mT_k \int_{\Mbar_{0,1}(\P5,d)} \ev^*\mT^k\frac{e(E_d)}{\hbar(\hbar-\psi)}    \bigg).
\end{eqnarray*}

 \subsection{Application to the  MSP set-up}

 Recall $\cW_d:=\cW_{g=0,(1,\rho),(d,0)}=\barM_{0,1}(\P5,d)$. 
Let  $$F_{d}=\pi_{d\ast}\ff_d^* L_H^{\otimes 5}, \qquad R_d=\pi_{d\ast}\ff_d^* (L_H\otimes L_\ft).$$  
 \black
  Then  $\rank R_d=d+1$ and $E_d=F_d\oplus (R_d\otimes L_{\ft'})$. Thus
   $$e(E_d)=e(F_d)e(R_d\otimes L_{\ft'})=e(F_d) \sum_{k=0}^\infty c_{\rank R_d-k}(R_d) (\ft')^k.$$

 With $[\Mbar_{0,1}(\P5,d)]\cap  e(F_d) = (-1)^{d+1}[\cW_{g=0,(1,\rho),(d,0)}]\virt$, the mirror theorem is rephrased as follows.
 \begin{coro}\label{closer}  
 The series $e^{-(T_0+p\log \sQ)/\hbar} I(q) $ is equal to 

\begin{eqnarray*} 5p(p+\ft+\ft')
-\sum_{d=1}^\infty (-\sQ)^d   \int_{[\cW_{g=0,(1,\rho),(d,0)}]\virt} \bigg( \sum_{k=0}^{4} p^k \frac{e(R_d\otimes L_{\ft'})}{\hbar(\hbar-\psi)} \cup  {\rm ev}^*_1 [p^{4-k}(p+\ft)]
+ p^5 \frac{e(R_d\otimes L_{\ft'})}{\hbar(\hbar-\psi)}  \bigg). 
\end{eqnarray*}
 \end{coro}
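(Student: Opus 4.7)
The plan is to derive the corollary as a direct algebraic rearrangement of the equivariant mirror identity $I(q) = J(\sQ)$ (under the mirror map $\sQ = e^{T}$) that was just quoted. First I would multiply both sides by the scalar $e^{-(T_0+p\log\sQ)/\hbar}$, which cancels the exponential prefactor on the $J$-side and produces
$$e^{-(T_0+p\log\sQ)/\hbar}\,I(q) \;=\; 5p(p+\ft+\ft') \;+\; \sum_{d\geq 1}\sQ^d \sum_{k=0}^{5}\mT_k \int_{\barM_{0,1}(\P5,d)} \ev^*\mT^k \cdot \frac{e(E_d)}{\hbar(\hbar-\psi)}.$$

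Next I would convert each $\barM_{0,1}(\P5,d)$-integral into an MSP integral. The direct sum decomposition $V = L_H^{\otimes 5}\oplus (L_H\otimes L_\ft\otimes L_{\ft'})$ gives $e(E_d) = e(F_d)\cdot e(R_d\otimes L_{\ft'})$, and the GW/MSP cycle comparison $[\barM_{0,1}(\P5,d)]\cap e(F_d) = (-1)^{d+1}[\cW_{g=0,(1,\rho),(d,0)}]\virt$ rewrites each integral as
$$(-1)^{d+1}\int_{[\cW_{g=0,(1,\rho),(d,0)}]\virt} \ev_1^*\mT^k \cdot \frac{e(R_d\otimes L_{\ft'})}{\hbar(\hbar-\psi)},$$
so that the sum over $d$ becomes $-\sum_{d\geq 1}(-\sQ)^d(\cdots)$, accounting for the sign and the substitution $\sQ \leadsto -\sQ$ that appears in the corollary.

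Finally I would expand the inner sum over $k$ using the explicit dual bases recorded above: for $k=0,\dots,4$ one has $\mT_k = p^k$ and $\mT^k = p^{4-k}(p+\ft)$, while for $k=5$ one has $\mT_5 = p^5$ and $\mT^5 = 1$. Pulling the scalar class $\mT_k = p^k$ outside each integral as a coefficient splits the sum cleanly into the two pieces displayed in the corollary, with the $k=5$ term contributing $p^5\cdot e(R_d\otimes L_{\ft'})/(\hbar(\hbar-\psi))$ (no evaluation-class insertion, since $\mT^5=1$). Since every step is algebraic rearrangement based on the already-quoted mirror theorem together with the comparison of virtual cycles, there is no substantive obstacle; the only bookkeeping to watch is the sign $(-1)^{d+1}$ and the separation of the $k\leq 4$ terms from the $k=5$ term.
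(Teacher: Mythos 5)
Your proposal is correct and follows exactly the route the paper takes: the paper sets up $e(E_d)=e(F_d)\,e(R_d\otimes L_{\ft'})$ and the cycle comparison $[\Mbar_{0,1}(\P5,d)]\cap e(F_d)=(-1)^{d+1}[\cW_{g=0,(1,\rho),(d,0)}]\virt$ in the sentence immediately preceding the corollary and then simply declares the mirror theorem ``rephrased,'' which is precisely your cancellation of the exponential prefactor, substitution of the virtual cycle (turning $\sum_d \sQ^d(-1)^{d+1}$ into $-\sum_d(-\sQ)^d$), and expansion of the dual bases $\mT_k=p^k$, $\mT^k=p^{4-k}(p+\ft)$ for $k\le 4$, $\mT^5=1$. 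No gaps.
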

 
  \black

 For each $j\geq 0$, define
 $$U_j:=-\sum_{d=1}^\infty (-\sQ)^d  \int_{[\cW_{g=0,(1,\rho),(d,0)}]\virt} \frac{e(R_d\otimes L_{\ft'})}{\hbar(\hbar-\psi)} \cup \ev_1^* p^j . $$ 
 Then $Y:=Y(p,q):=e^{-(T_0+p\log \sQ)/\hbar} I(q)$ in Corollary \ref{closer} equals
 $$ 5p(p+\ft+\ft') +p^0(U_5+\ft U_4)+p^1(U_4+\ft U_3)+p^2(U_3+\ft U_2)+p^3(U_2+\ft U_1)+p^4(U_1+\ft U_0)+p^5 U_0.$$
We also write  $Y(p)=\sum_{j=0}^5 ({\rm Coe}_{p^j}Y)  \  p^j$. 
 We define 
$$  \ti\sZ_{1,5} :=   - \sum_{d=1}^\infty (-\sQ)^d \sum_{(\ga)\in \Xi^{1}_{d}} \int_{[\cW_{(\ga)}]\virt} \frac{1}{\hbar(\hbar-\psi)}   \frac{e(R\pi_{\ga\ast}(\cL_\ga \cL_{\ft} \cL_{\ft'}(-D_\ga)))}{e(N_{\cW_{(\ga)}/\cW}^{vir})}$$
 $$\quad = - \frac{1}{\ft'} \sum_{d=1}^\infty (-\sQ)^d \sum_{(\ga)\in \Xi^{1}_{d}} \int_{[\cW_{(\ga)}]\virt} \frac{1}{\hbar(\hbar-\psi)}   \frac{e(R\pi_{\ga\ast}(\cL_\ga \cL_{\ft} \cL_{\ft'} ))}{e(N_{\cW_{(\ga)}/\cW}^{vir})},$$
 where  $\Xi^1_{n}$ denotes localization graphs with the first marking mapped to $O$ and $\cL_\Gamma$ is the universal line bundle. 
 Then, as $p^4|_{P_i}=0$ for $0\le i\le 4$ and $p|_O=-\ft$,  by the localization formula, one has $U_j=(-\ft)^j \cdot \ft' \cdot \ti\sZ_{1,5}$ whenever $j\geq 4$.  Thus ${\rm Coe}_{p^0}Y=0$ and
 \begin{eqnarray*}
 \ft^4\ft'\ti\sZ_{1,5} &=& U_4= -5(\ft+\ft')+ {\rm Coe}_{p^1}Y-\ft U_3 
 = -5(\ft+\ft')+ {\rm Coe}_{p^1}Y-\ft ({\rm Coe}_{p^2} Y -\ft U_2 -5)\\
                &=& -5 \ft'+ {\rm Coe}_{p^1}Y-\ft {\rm Coe}_{p^2} Y +\ft^2 {\rm Coe}_{p^3}Y-\ft^3 {\rm Coe}_{p^4}Y +\ft^4 {\rm Coe}_{p^5} Y
 \end{eqnarray*}
 implies $-\ft^5\ft' \ti\sZ_{1,5}= 5 \ft\ft' + Y|_{p\mapsto -\ft}.$ 
   Thus we have 
  \begin{theo}\label{package-MSP-e}For MSP moduli $\cW_{g=0,(1,\rho),(d,0)}$,
  $$  \sum_{d=1}^\infty (-\sQ)^{d} \sum_{(\Ga)\in \Xi^1_{d}} \int_{[\cW_{(\ga)}]\virt}  \frac{1}{\hbar(\hbar-\psi_1)}  \frac{ e(\pi_{\ga\ast}(\cL_\Gamma\otimes \cL_{\ft} \otimes\cL_{\ft'}(-D_\ga))  )\black}{e(N_{\cW_\ga/\cW}^{vir})} $$
(here $-D_\ga\sub \cC_{\cW_{\ga}}$ is from the marking) is  equal to 
$$\frac{5}{\ft^4}+\frac{-5}{\ft^4}\frac{1}{I_0}  {\rm exp}(\frac{\ft (T-\ft)}{\hbar}-\frac{\ft'g_1}{I_0\hbar} )\cdot    \sum_{d=0}^\infty q^d \frac{  \prod_{m=1}^{5d}(-5\ft+m\hbar)  \prod_{m=1}^d(\ft'+m\hbar)}{d!\hbar^d\prod_{m=1}^d(-\ft+m\hbar)^5}. $$
\end{theo}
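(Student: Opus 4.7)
The plan is to reduce the theorem to an algebraic substitution into Givental's $I$-function, leveraging the identity
$$ -\ft^5\ft'\,\ti\sZ_{1,5} \;=\; 5\ft\ft' + Y\bigl|_{p=-\ft} $$
that is already established in the excerpt through Corollary~\ref{closer} and the extraction of $p$-coefficients (using the torus fixed-point values $p|_{P_i}=0$ on the $\PP^4\subset\PP^5$ part and $p|_O=-\ft$ at the distinguished fixed point, which together with the localization formula identify $U_j = (-\ft)^j\ft'\,\ti\sZ_{1,5}$ for $j\ge 4$). The left-hand side of the theorem being evaluated is exactly $\ti\sZ_{1,5}$, so the entire remaining task is to compute $Y\bigl|_{p=-\ft} = e^{-(T_0+p\log\sQ)/\hbar}\,I(q)\bigl|_{p=-\ft}$ explicitly and then divide.

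First I would specialize $I(q)$ at $p=-\ft$. The leading prefactor $5p(p+\ft+\ft')$ collapses to $-5\ft\ft'$. The crucial simplification inside the infinite product is
$$\prod_{m=1}^{d}(p+\ft+m\hbar)\,\Bigl|_{p=-\ft} \;=\; \prod_{m=1}^{d} m\hbar \;=\; d!\,\hbar^d,$$
which accounts precisely for the denominator $d!\,\hbar^d$ appearing in the stated hypergeometric series on the right-hand side. The remaining factors specialize to $\prod_{m=1}^{5d}(-5\ft+m\hbar)$ and $\prod_{m=1}^{d}(\ft'+m\hbar)$ in the numerator, and $\prod_{m=1}^{d}(-\ft+m\hbar)^5$ in the denominator, matching the hypergeometric series exactly.

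Next I would assemble the exponential prefactors. The factor $q^{p/\hbar}\,e^{-p\log\sQ/\hbar}=e^{p(t-T)/\hbar}$ evaluates at $p=-\ft$ to $\exp(\ft(T-t)/\hbar)$, while $e^{-T_0/\hbar}$ with $T_0=\hbar\log I_0 + \ft' g_1/I_0$ contributes $I_0^{-1}\exp(-\ft' g_1/(I_0\hbar))$. Combining these with the specialization of $I(q)$ above yields
$$Y\bigl|_{p=-\ft} \;=\; -\,\frac{5\ft\ft'}{I_0}\,\exp\!\Bigl(\tfrac{\ft(T-t)}{\hbar}-\tfrac{\ft' g_1}{I_0\hbar}\Bigr)\,\sum_{d=0}^{\infty}q^d\,\frac{\prod_{m=1}^{5d}(-5\ft+m\hbar)\prod_{m=1}^{d}(\ft'+m\hbar)}{d!\,\hbar^d\prod_{m=1}^{d}(-\ft+m\hbar)^5},$$
and dividing $-\ft^5\ft'\,\ti\sZ_{1,5} = 5\ft\ft' + Y\bigl|_{p=-\ft}$ by $-\ft^5\ft'$ reproduces the stated formula, with the additive constant $5\ft\ft'$ yielding the leading term $5/\ft^4$.

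No substantive obstacle remains beyond careful bookkeeping of the several exponential prefactors and the collapse of the $m$-th factor in the $d$-fold product; the real content of the theorem -- the translation from Givental's $J$-function expression in Corollary~\ref{closer} to an $\ti\sZ_{1,5}$-style MSP integral via localization to the fixed point $O$ -- has already been absorbed in the identity displayed above. The only subtlety to flag is that the mixing of the large-structure coordinate $t$ (with $q=e^t$) and the mirror coordinate $T=I_1/I_0$ in the exponential prefactor must be tracked consistently when passing from Givental's normalization to the MSP normalization.
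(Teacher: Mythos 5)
Your proposal is correct and follows the same route as the paper: the identity $-\ft^5\ft'\,\ti\sZ_{1,5}=5\ft\ft'+Y|_{p\mapsto -\ft}$ is derived in the lines immediately preceding the theorem, and the theorem itself is obtained exactly as you describe, by specializing Givental's $I$-function at $p=-\ft$ (so that $\prod_{m=1}^d(p+\ft+m\hbar)$ collapses to $d!\,\hbar^d$) and assembling the prefactors $e^{-(T_0+p\log\sQ)/\hbar}q^{p/\hbar}|_{p=-\ft}=I_0^{-1}\exp(\ft(T-t)/\hbar-\ft'g_1/(I_0\hbar))$. Your bookkeeping also correctly yields $\ft(T-t)/\hbar$ in the exponent, consistent with the paper's equation \eqref{Z1,5} (the theorem statement's $\ft(T-\ft)$ is a typo).
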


We then have the following package for $c_i$-capped invariants.

 \begin{theo} \label{package-MSP-c} For MSP moduli $\cW_{g=0,(1,\rho),(d,0)}$,    $\forall m\in\NN_{\geq 1}$
  $$  \sum_{d=1}^\infty (-\sQ)^{d} \sum_{(\Ga)\in \Xi^1_{d}} \int_{[\cW_{(\ga)}]\virt}  \frac{1}{\hbar(\hbar-\psi_1)}  \frac{ c_{{\rm top}_\ga-m}(\pi_{\ga\ast}(\cL_\Gamma\otimes \cL_{\ft}))  \black}{e(N_{\cW_\ga/\cW}^{vir})} $$
is the $(\ft')^{m-1}$ coefficient of 
$$\frac{5}{\ft^4}+\frac{-5}{\ft^4}\frac{1}{I_0}  {\rm exp}(\frac{\ft (T-\ft)}{\hbar}-\frac{\ft'g_1}{I_0\hbar} )\cdot    \sum_{d=0}^\infty q^d \frac{  \prod_{m=1}^{5d}(-5\ft+m\hbar)  \prod_{m=1}^d(\ft'+m\hbar)}{d!\hbar^d\prod_{m=1}^d(-\ft+m\hbar)^5}. $$
 \end{theo}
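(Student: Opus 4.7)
The plan is to derive the theorem from Theorem \ref{package-MSP-e} by extracting the coefficient of $(\ft')^{m-1}$ on both sides. Since the two theorems share exactly the same right-hand side as a formal series in $\ft'$, the content is to identify the $(\ft')^{m-1}$-coefficient of the left-hand side of Theorem \ref{package-MSP-e}. By the projection formula,
\[
R\pi_{\ga*}\bigl(\cL_\Gamma \otimes \cL_{\ft} \otimes \cL_{\ft'}(-D_\ga)\bigr) \;=\; V_\ga \otimes \cL_{\ft'},
\qquad V_\ga := R\pi_{\ga*}\bigl(\cL_\Gamma \otimes \cL_{\ft}(-D_\ga)\bigr),
\]
and the expansion $e(V_\ga \otimes \cL_{\ft'}) = \sum_{k \ge 0} c_{\rank V_\ga - k}(V_\ga)\,(\ft')^k$ shows that the $(\ft')^{m-1}$-coefficient of the Euler class equals $c_{\rank V_\ga - m + 1}(V_\ga)$.

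Next I would rewrite $V_\ga$ in terms of $\pi_{\ga*}(\cL_\Gamma\cL_\ft)$. The short exact sequence
\[
0 \to \cL_\Gamma\cL_\ft(-D_\ga) \to \cL_\Gamma\cL_\ft \to (\cL_\Gamma\cL_\ft)|_{D_\ga} \to 0
\]
yields, upon applying $R\pi_{\ga*}$ and using that $\pi_\ga|_{D_\ga}\colon D_\ga \to \cW_\ga$ is an isomorphism, the K-theoretic identity $V_\ga = R\pi_{\ga*}(\cL_\Gamma\cL_\ft) - (\cL_\Gamma\cL_\ft)|_{D_\ga}$. Because the first marking lies in $\Xi^1_d$ and therefore evaluates to $O \in \PP^5$ with $p|_O = -\ft$, the line bundle $\cL_\Gamma|_{D_\ga}$ has $T$-weight $-1$, so $(\cL_\Gamma\cL_\ft)|_{D_\ga}$ is the equivariantly trivial line bundle $\cO_{\cW_\ga}$. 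Combined with the vanishing $R^1\pi_{\ga*}(\cL_\Gamma\cL_\ft) = 0$ on every $\cW_\ga$ (addressed below), this gives
\[
V_\ga \;=\; \pi_{\ga*}(\cL_\Gamma\cL_\ft) - \cO_{\cW_\ga} \quad\text{in K-theory,}
\]
so $\rank V_\ga = {\rm top}_\ga - 1$ and $c_j(V_\ga) = c_j(\pi_{\ga*}(\cL_\Gamma\cL_\ft))$ for every $j$. Hence the $(\ft')^{m-1}$-coefficient of $e(V_\ga\otimes \cL_{\ft'})$ is precisely $c_{{\rm top}_\ga - m}(\pi_{\ga*}(\cL_\Gamma\cL_\ft))$, which is the integrand appearing in the claim; extracting the $(\ft')^{m-1}$-coefficient from both sides of Theorem \ref{package-MSP-e} then concludes.

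The step I expect to require the most care is the vanishing $R^1\pi_{\ga*}(\cL_\Gamma\cL_\ft) = 0$ for every graph $\ga$ contributing to $\cW_{0,(1,\rho),(d,0)}$. Since $d_\infty = 0$ constrains the possible decorated trees, I would argue by case analysis on the vertex and edge labels of $\ga$ that the universal curve over $\cW_\ga$ is a family of tree-like nodal curves of arithmetic genus $0$ whose irreducible components each carry $\cL_\Gamma$ of non-negative degree; the fibrewise vanishing $H^1(C_{\bar w}, \cL_\Gamma\cL_\ft|_{C_{\bar w}}) = 0$ then follows from Mayer-Vietoris on the dual graph of $C_{\bar w}$, and $R^1\pi_{\ga*} = 0$ from cohomology and base change. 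Once this is in place, the rest of the argument reduces to the coefficient-extraction described above.
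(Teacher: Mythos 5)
Your proposal is correct and follows the same route the paper intends: Theorem \ref{package-MSP-c} is obtained from Theorem \ref{package-MSP-e} by extracting the $(\ft')^{m-1}$-coefficient, using that for $(\ga)\in\Xi^1_d$ the restriction $(\cL_\Gamma\cL_\ft)|_{D_\ga}=\ev^*(L_HL_\ft)$ is equivariantly trivial (since $p|_O=-\ft$), so that $c_j\bigl(R\pi_{\ga*}(\cL_\Gamma\cL_\ft(-D_\ga))\bigr)=c_j\bigl(\pi_{\ga*}(\cL_\Gamma\cL_\ft)\bigr)$ with the rank dropping by one -- exactly the manipulation the paper performs elsewhere (e.g.\ in the two expressions for $\ti\sZ_{1,5}$ and in \S4.3). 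Your extra care about $R^1\pi_{\ga*}=0$ via genus-zero convexity is a detail the paper leaves implicit, but it is correct and does not change the argument.
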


 In the special case $\ft'=-\ft$ of Corollary \ref{closer},  one has the following.

 \begin{coro}\label{closer0} Write $\fA_d=\pi_{d\ast} \ff_d\sta L_H$. It is  a rank $d+1$ bundle over $\cW_{0,(1,\rho),(d,0)}$.
 The series
 \begin{eqnarray*} 5p^2 
-\sum_{d=1}^\infty (-\sQ)^d \sum_{k=0}^{4} \int_{[\cW_{g=0,(1,\rho),(d,0)}]\virt}  \bigg( p^k\frac{e( \fA_d )}{\hbar(\hbar-\psi)} \cup {\rm ev}_1^*[p^{4-k}(p+\ft)]
+ p^5 \frac{e( \fA_d )}{\hbar(\hbar-\psi)}  \bigg) 
\end{eqnarray*}
   is equal to  
   \beq\label{Gosh} \frac{5p^2}{I_0} e^{\frac{1}{\hbar}( \frac{g_1}{I_0} \ft+p(t-T))} 
    \sum_{d=0}^\infty q^d
\frac{ \prod_{m=1}^{5d}(5 p +m\hbar) }
{\prod_{m=1}^d(p+ m\hbar)^4(p+\ft+m\hbar)}. \eeq
 \end{coro}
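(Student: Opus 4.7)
The plan is to derive Corollary~3.2 as the direct specialization of Corollary~3.1 (the identity \emph{closer}) at $\ft'=-\ft$. No new geometric input is needed; the task reduces to verifying that both sides of Corollary~3.1 transform correctly under this substitution.

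First, I would address the left-hand side. The key observation is the $G$-equivariant identity $L_H\otimes L_\ft\otimes L_{-\ft}=L_H$ on $\PP^5$. Since $L_{-\ft}$ is pulled back from the base $\cW_{0,(1,\rho),(d,0)}$, the projection formula applied to $\pi_d\colon\cC_d\to \cW_{0,(1,\rho),(d,0)}$ yields
\[
R_d\otimes L_{-\ft}=\pi_{d*}\bigl(f_d^*(L_H\otimes L_\ft)\otimes f_d^*L_{-\ft}\bigr)=\pi_{d*}f_d^*L_H=\fA_d
\]
as $G$-equivariant bundles. Consequently $e(R_d\otimes L_{\ft'})|_{\ft'=-\ft}=e(\fA_d)$, and the prefactor $5p(p+\ft+\ft')$ specializes to $5p^2$. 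This matches term-by-term the left-hand side of the corollary.

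Second, I would simplify the right-hand side $e^{-(T_0+p\log\sQ)/\hbar}I(q)$ of Corollary~3.1 at $\ft'=-\ft$. Since $T_0=\hbar\log I_0+\ft' g_1/I_0$ and $\log\sQ=T$, the exponential factor, together with $q^{p/\hbar}=e^{pt/\hbar}$ from $I(q)$, combines to
\[
\frac{1}{I_0}\exp\!\Bigl(\frac{1}{\hbar}\bigl(\frac{\ft g_1}{I_0}+p(t-T)\bigr)\Bigr).
\]
In the hypergeometric sum, the numerator factor $\prod_{m=1}^d(p+\ft+\ft'+m\hbar)$ collapses to $\prod_{m=1}^d(p+m\hbar)$, which cancels one of the five copies of $p+m\hbar$ in the denominator, leaving $\prod_{m=1}^d(p+m\hbar)^4(p+\ft+m\hbar)$. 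Extracting the overall $5p^2$ then assembles exactly the expression \eqref{Gosh}.

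The whole argument is routine bookkeeping once the equivariant isomorphism $R_d\otimes L_{-\ft}\cong\fA_d$ is in hand; there is no conceptual obstacle. The only point requiring a moment of care is that the substitution $\ft'=-\ft$ must be performed at the level of $H^*_{G\times G'}(\mathrm{pt})$ \emph{before} one takes Euler classes or integrates, so that the $\ft'$-dependence in both the numerator product $\prod(p+\ft+\ft'+m\hbar)$ and in the bundle twist $R_d\otimes L_{\ft'}$ is replaced consistently.
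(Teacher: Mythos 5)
Your proposal is correct and is precisely the paper's own argument: Corollary \ref{closer0} is obtained by setting $\ft'=-\ft$ in Corollary \ref{closer}, with the identification $R_d\otimes L_{-\ft}\cong \fA_d$ (via the projection formula, since $L_\ft\otimes L_{-\ft}$ is equivariantly trivial) handling the left-hand side and the cancellation $\prod_{m=1}^d(p+\ft+\ft'+m\hbar)|_{\ft'=-\ft}=\prod_{m=1}^d(p+m\hbar)$ together with the specialization of $T_0$ handling the right-hand side. Your bookkeeping of the exponential prefactor $\tfrac{1}{I_0}\exp\bigl(\tfrac{1}{\hbar}(\tfrac{\ft g_1}{I_0}+p(t-T))\bigr)$ matches \eqref{Gosh} exactly.
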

 

Taking coefficients of $\ft'$ of the formula in Corollary \ref{closer},  we obtain the following identity.
 
 \begin{coro}\label{closer1}
 The series
$$   5p-\sum_{d=1}^\infty (-\sQ)^d  \sum_{k=0}^{4} \int_{[\cW_{g=0,(1,\rho),(d,0)}]\virt} \bigg( p^k\frac{ c_{d}(R_d)}{\hbar(\hbar-\psi)} \cup {\rm ev}_1^*[p^{4-k}(p+\ft)]\\
+ p^5  \frac{c_{d}(R_d)}{\hbar(\hbar-\psi)}  \bigg)  $$
     is equal to the  coefficient of $\ft'$ in $e^{-(T_0+p\log \sQ)/\hbar} I(0,q)$,
   which is  
$$ \frac{  5p (p+\ft)}{I_0} e^{ \frac{p  }{\hbar}(t-T) }
 \bigg[   (\frac{1}{p+\ft}
-\frac{  g_1 }{\hbar I_0} )  \sum_{d=0}^\infty q^d
\frac{ \prod_{m=1}^{5d}(5 p +m\hbar)}
{\prod_{m=1}^d(p+ m\hbar)^5}  + 
  \sum_{d=1}^\infty q^d
\frac{ \prod_{m=1}^{5d}(5 p +m\hbar)  (\sum_{m=1}^d \frac{1}{p+\ft+m\hbar})}
{\prod_{m=1}^d(p+ m\hbar)^5}   \bigg]. 
 $$
 \end{coro}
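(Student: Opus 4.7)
The plan is to extract the coefficient of $\ft'$ on both sides of Corollary \ref{closer}. Both sides are formal power series in $\ft'$, and reading off the linear term will produce exactly the asserted identity.

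On the geometric (left-hand) side, the only $\ft'$-dependence sits in the Euler class $e(R_d\otimes L_{\ft'})$. Since $\operatorname{rank} R_d = d+1$, the splitting principle gives
\begin{equation*}
e(R_d\otimes L_{\ft'}) \;=\; \sum_{k=0}^{d+1} c_{d+1-k}(R_d)\,(\ft')^{k} \;=\; c_{d+1}(R_d) \;+\; \ft'\, c_d(R_d) \;+\; O((\ft')^2).
\end{equation*}
The $\ft'$-coefficient therefore replaces $e(R_d\otimes L_{\ft'})$ by $c_d(R_d)$ everywhere, and the polynomial prefactor $5p(p+\ft+\ft')$ contributes the free term $5p$. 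This reproduces exactly the left-hand side of Corollary \ref{closer1}.

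On the $I$-function side, I first substitute $T_0 = \hbar\log I_0 + \ft'\, g_1/I_0$ and $\sQ = e^T$ to rewrite
\begin{equation*}
e^{-(T_0+p\log\sQ)/\hbar}\,I(q) \;=\; \frac{5p(p+\ft+\ft')}{I_0}\;e^{\frac{p(t-T)}{\hbar}}\;e^{-\frac{\ft' g_1}{I_0\hbar}}\; S(\ft'),
\end{equation*}
where $S(\ft'):=\sum_{d\ge 0} q^d \frac{\prod_{m=1}^{5d}(5p+m\hbar)\prod_{m=1}^{d}(p+\ft+\ft'+m\hbar)}{\prod_{m=1}^{d}(p+m\hbar)^5(p+\ft+m\hbar)}$. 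Observe that at $\ft'=0$ the numerator factor $(p+\ft+m\hbar)$ cancels the denominator one, so $S(0) = \sum_d q^d \prod(5p+m\hbar)/\prod(p+m\hbar)^5$, and
\begin{equation*}
\left.\frac{\partial S}{\partial \ft'}\right|_{\ft'=0} \;=\; \sum_{d\ge 0} q^d\, \frac{\prod_{m=1}^{5d}(5p+m\hbar)}{\prod_{m=1}^{d}(p+m\hbar)^5}\sum_{m=1}^d \frac{1}{p+\ft+m\hbar}.
\end{equation*}

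Three factors contribute $\ft'$-linear terms: the polynomial prefactor, the exponential, and $S(\ft')$. Applying the Leibniz rule, the $\ft'$-coefficient becomes
\begin{equation*}
\frac{e^{p(t-T)/\hbar}}{I_0}\left[\,5p\cdot S(0) \;+\; 5p(p+\ft)\cdot\Bigl(-\tfrac{g_1}{I_0\hbar}\Bigr)S(0) \;+\; 5p(p+\ft)\cdot \partial_{\ft'}S(0)\,\right].
\end{equation*}
Pulling out the common factor $5p(p+\ft)/I_0\cdot e^{p(t-T)/\hbar}$ turns the first summand into $\frac{1}{p+\ft}S(0)$, giving precisely the bracketed expression in the statement.

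There is no real obstacle: the proof is pure expansion to first order in $\ft'$. The only care required is the cancellation between $\prod_{m=1}^d(p+\ft+\ft'+m\hbar)|_{\ft'=0}$ and the denominator factor $\prod_{m=1}^d(p+\ft+m\hbar)$, which is what produces the clean shape $\prod(5p+m\hbar)/\prod(p+m\hbar)^5$ on the right.
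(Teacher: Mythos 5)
Your proposal is correct and is exactly the route the paper takes: the paper derives Corollary \ref{closer1} by the one-line remark ``taking coefficients of $\ft'$ of the formula in Corollary \ref{closer},'' and your computation simply spells out that extraction (the expansion $e(R_d\otimes L_{\ft'})=c_{d+1}(R_d)+\ft'c_d(R_d)+O((\ft')^2)$ on the geometric side, and the Leibniz rule applied to the three $\ft'$-dependent factors on the $I$-function side, with the cancellation of $\prod_{m=1}^d(p+\ft+m\hbar)$ handled correctly). No gaps.
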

 
 \subsection{From MSP to Givental I-functions}

Consider 
\begin{eqnarray}\label{bi}
B_d(h):=\frac{\prod_{k=1}^{5d}(-5h+\frac{k(h+\ft)}{d})}{\prod_{k=1}^d (h-\frac{k(h+\ft)}{d})^5}=b_0+b_1\frac{h}{\ft}+b_2 \frac{h^2}{\ft^2}+b_3\frac{h^3}{\ft^3}\in \QQ[\ft,\ft^{-1}][h]/(h^4) 
\end{eqnarray}
\begin{eqnarray*}
A_d(h):=\frac{\prod_{k=1}^{5d}(5h+k)}{\prod_{k=1}^d(h+k)^5}
=a_0+a_1h+a_2h^2+a_3h^3\in \QQ[h]/(h^4)
\end{eqnarray*}
where both $a_i=a_i(d)$,  $b_i=b_i(d) $ are rational numbers. Apply the  map $\theta:\QQ[\ft,\ft^{-1}][h]/(h^4)\to \QQ[h]/(h^4)$ which sends $\ft$ to $(-1-h)$. Since $(-1-h)$ is invertible in $\QQ[h]/(h^4)$, this defines a ring homomorphism. Then
 $\theta(B_d(h))=(-1)^{5d} A_d(dh)$ implies
 $$b_0=(-1)^da_0, \quad b_1=(-1)^{d+1}a_1d, \quad  b_2=(-1)^d[a_2d^2+a_1d], \quad b_3=(-1)^{d+1}[a_3d^3+2a_2d^2+a_1d].$$

Recall the definition of Givental's I function 
 \beq\label{Ifun-a}I_0(t)+I_1(t)h+I_2h^2+ \cdots+I_3h^3=e^{ht}\sum_{d=0}^\infty e^{dt} \frac{\prod_{r=1}^{5d}(5h+r)}{\prod_{r=1}^d(h+r)^5}=e^{ht}\sum_{d=0}^\infty e^{dt} A_d(h) \, ( \mod h^4\  \ ) .\eeq

 
 Recall $D_t:=q\frac{\partial }{\partial q}= \frac{\partial}{\partial t}$ , and denote   $D_t I_k$ as $I_k'$.  The relations in \eqref{Ifun-a} imply
\begin{eqnarray}\label{MSPtoGiv}
1+\sum_{d=1}^\infty  b_0 (-q)^d&=& 1+\sum_{d=1}^\infty  a_0(d) q^d
=I_0, \\
 \sum_{d=1}^\infty  b_1 (-q)^d &=&- \sum_{d=1}^\infty   d a_1(d) q^d
 =-D_tI_1+   D_t (tI_0) ,  \nonumber\\ 
  \sum_{d=1}^\infty  b_2 (-q)^d &=& \sum_{d=1}^\infty [a_2d^2+a_1d]q^d 
=  D_t^2   I_2 -  D_t (tI'_1)+  D_t  (\frac{t^2}{2}I'_0 )    ,\nonumber\\
\sum_{d=1}^\infty  b_3 (-q)^d&=&- \sum_{d=1}^\infty [a_3d^3+2a_2d^2+a_1d]q^d  
 = - D_t^3 I_3  + D_t ( tI_2'' ) -D_t (  \frac{t^2}{2} I_1'')  
+D_t (  \frac{t^3}{6}I_0''  ). \nonumber 
  \end{eqnarray}


\section{$\sK$-series and $\sP$-series}

\subsection{Definitions of $\sK$-series and $\sP$-series}
  Consider the  localization of $\cW_{g=0,\1rho,(n,0)}$.
  Let $\Xi_{n}$ be the set of flat $g=0$ graphs with one leg decorated by $\1rho$ and $(d_0,d_\infty)=(n,0)$, and 
 let $\Xi_{n}^i\sub\Xi_n$, $i=0$ or $1$, be the subset where the marking lies on vertices in $V_i$.

 For   $c>0$ \black and $k = 0,\cdots,5$, we set a formal power  series in $\QQ(\ft)[[\sQ]]$
 \beq\label{sK} \sK_{c,k}(\ft):=   \sum_{d=1}^\infty (-\sQ)^d \sum_{(\Ga)\in \hat\Xi^{0}_{d}} \int_{[\cW_{(\ga)}]\virt} \frac{1}{\psi^c}\ev\sta \mT^k  \frac{c_{{\rm top}_\ga-1}(R\pi_{\ga\ast}(\cL_\Gamma\otimes \cL_{\ft}(-D_\ga)))}{e(N_{\cW_{(\ga)}/\cW}^{vir})} \in \QQ[[\sQ]] \ft^{2-k-c}, \eeq
where $ \hat\Xi^{0}_{d}$ denotes the subset of graphs in $\Xi^{0}_{d}$ whose vertices contain an unstable $\1rho$-leg, $D_\ga\sub\cC_{\cW_\ga}$ is the divisor given by the marking,
and  ${\rm top}_\ga$ is the rank of the bundle $R\pi_{\ga\ast}(\cL_\Gamma\otimes \cL_{\ft}(-D_\ga))$.

 For convenience, we also consider  formal power  series in $\QQ(\ft)[[\ft']][[\sQ]]$
$$ \ti\sK_{c,k}( \ft, \ft'):=\sum_{d=1}^\infty (-\sQ)^d \sum_{(\Ga)\in \hat\Xi^{0}_{d}} \int_{[\cW_{(\ga)}]\virt} \frac{1}{\psi^c}ev\sta \mT^k  \frac{e(\pi_{\ga \ast}\cL_{\ga}\cL_\ft \cL_{\ft'}(-D_\ga)) }{e(N_{\cW_{(\ga)}/\cW}^{vir})},$$
 
 \begin{eqnarray*}  &&\hat\sK_k(w):=\sum_{c=1}^\infty \ti\sK_{c,k} (-w)^{c-1}\\
 &= &  \sum_{d=1}^\infty (-\sQ)^d \sum_{(\ga)\in \hat\Xi^{0}_{d}} \int_{[\cW_{(\ga)}]\virt} \frac{1}{\psi_{\ga}+w}\ev\sta \mT^k  \frac{e(R\pi_{\ga\ast}(\cL_\Gamma\otimes \cL_{\ft}\otimes \cL_{\ft'}(-D_\ga)))}{e(N_{\cW_{(\ga)}/\cW}^{vir})},
\end{eqnarray*}
which is an expansion near $w=0$. Note that $$e(\pi_{\ga \ast}\cL_{\ga}\cL_\ft \cL_{\ft'}(-D_\ga))=\sum_{\ell=0}^\infty c_{{\rm top}_\ga-\ell}(R\pi_{\ga\ast}(\cL_\Gamma\otimes \cL_{\ft}(-D_\ga) ))(\ft')^\ell.$$

 By definition, $\sK_{c,k}$ is the coefficient of $ (\ft' ) ^1$ in $\ti\sK_{c,k}$. Observe that   the
 constant term of $\ft'$-expansion of  $\ti\sK_{c,k}$ vanishes because $\nu_1$ makes $e\bl R \pi_{\ga\ast}(\cL_\ga\cL_\ft(-D_\ga)) \br $ vanishing.
 We call this  \textit{cap geometry}, and write the expansion  
\beq\label{tic}  \ti\sK_{c,k}= \sK_{c,k}\ft' + \sK_{c,k,1} (\ft')^2+ \cdots+ \sK_{c,k,\ell}(\ft')^{\ell+1}+\cdots.\eeq


  We  define  $\sP$-series as follows. For   $c>0$,  \black  we set  a formal power  series in $\QQ(\ft)[[\sQ]]$
 \begin{align*} \sP_{c}(\ft):=   \sum_{d=1}^\infty (-\sQ)^d \sum_{(\ga)\in \hat\Xi^{1}_{d}} \int_{[\cW_{(\ga)}]\virt} \frac{1}{\psi^c}  \frac{c_{{\rm top}_\ga}(R\pi_{\ga\ast}(\cL_\Gamma\otimes \cL_{\ft}(-D_\ga)))}{e(N_{\cW_{(\ga)}/\cW}^{vir})} , \end{align*}

 $$ \ti\sP_{c}( \ft.\ft'):=\sum_{d=1}^\infty (-\sQ)^d \sum_{(\ga)\in \hat\Xi^{1}_{d}} \int_{[\cW_{(\ga)}]\virt} \frac{1}{\psi^c}  \frac{e(\pi_{\ga \ast}\cL_{\ga}\cL_\ft \cL_{\ft'}(-D_\ga)) }{e(N_{\cW_{(\ga)}/\cW}^{vir})} .$$
 Let $ \hat\Xi^{1}_{d}$ be  the subset of graphs in $\Xi^{1}_{d}$ whose vertices contain the unstable $\1rho$-leg.  We also define  \beq\label{hatP} \hat \sP (w):= \sum_{d=1}^\infty (-\sQ)^d \sum_{(\ga)\in \hat\Xi^{1}_{d}} \int_{[\cW_{(\ga)}]\virt} \frac{1}{ w + \psi_\ga }  \frac{e(\pi_{\ga \ast}\cL_{\ga}\cL_\ft \cL_{\ft'}(-D_\ga)) }{e(N_{\cW_{(\ga)}/\cW}^{vir})} =  \ti\sP_1 - w \ti\sP_2 +w^2 \ti\sP_3 - \cdots, \eeq
 $$\ti\sZ_{j,k}:=  - \sum_{d=1}^\infty (-\sQ)^d \sum_{(\ga)\in \Xi^{j}_{d}} \int_{[\cW_{(\ga)}]\virt} \frac{1}{\hbar(\hbar-\psi)}\ev\sta \mT^k  \frac{e(R\pi_{\ga\ast}(\cL_\ga \cL_{\ft} \cL_{\ft'}(-D_\ga)))}{e(N_{\cW_{(\ga)}/\cW}^{vir})},  \qquad \text{for}\ j=0,1,$$




  \beq\label{Zjk}\sZ_{j,k}:=  - \sum_{d=1}^\infty (-\sQ)^d \sum_{(\ga)\in \Xi^{j}_{d}} \int_{[\cW_{(\ga)}]\virt} \frac{1}{\hbar(\hbar-\psi)}\ev\sta \mT^k  \frac{c_{{\rm top}-1}(R\pi_{\ga\ast}(\cL_\ga \cL_{\ft} (-D_\ga)))}{e(N_{\cW_{(\ga)}/\cW}^{vir})} ,   \qquad \text{for}\ j=0,1.\eeq

\begin{lemm}\label{res} For $c\geq 1$, we have
$$\ti\sK_{c,k}=  {\rm Res}_{\hbar=0}  \frac{\ti\sZ_{0,k}}{\hbar^{c-1}}, \quad \sK_{c,k}=  {\rm Res}_{\hbar=0}  \frac{\sZ_{0,k}}{\hbar^{c-1}},\quad 
 \ti \sP_c=  {\rm Res}_{\hbar=0}  \frac{\ti\sZ_{1,5}}{\hbar^{c-1}} ,   \quad \sP_c =   {\rm Res}_{\hbar=0}  \frac{\sZ_{1,5}}{\hbar^{c-1}}.$$
\end{lemm}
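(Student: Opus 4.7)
The plan is to interpret all four claimed identities as a single residue computation obtained by expanding the overall factor $\frac{1}{\hbar(\hbar-\psi)}$ that appears in $\ti\sZ_{j,k}$. I would split $\Xi^j_d = \hat\Xi^j_d \sqcup \bigl(\Xi^j_d \setminus \hat\Xi^j_d\bigr)$, where $\hat\Xi^j_d$ consists of graphs whose marking sits on an unstable $(1,\rho)$-leg and the complement consists of graphs whose marking sits on a stable vertex $v \in V^S$. The key observation is that these two families behave very differently with respect to the $\hbar$-expansion of $\frac{1}{\hbar(\hbar-\psi)}$, and only the unstable family will contribute to $\mathrm{Res}_{\hbar=0}$.

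In the stable case, $\psi$ is the ordinary nilpotent cotangent-line class on the moduli factor $\cW_v$, so
\[
\frac{1}{\hbar(\hbar-\psi)} \;=\; \sum_{j\ge 0}\frac{\psi^{j}}{\hbar^{j+2}}
\]
is a finite sum in the Chow ring, containing only powers $\hbar^{-2},\hbar^{-3},\dots$ Multiplying by $\hbar^{-(c-1)}$ yields only powers $\hbar^{-(c+1)}, \hbar^{-(c+2)},\dots,$ so for every $c\ge 1$ the coefficient of $\hbar^{-1}$ vanishes and these graphs contribute nothing to $\mathrm{Res}_{\hbar=0}\,\ti\sZ_{j,k}/\hbar^{c-1}$. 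In the unstable case, the localization rules for $(1,\rho)$-legs (compatible with Lemma \ref{wev} and the conventions embodied in the definitions of $\sK$ and $\sP$) replace $\psi$ at the marking by a nonzero weight $w\in\QQ(\ft)$, which is precisely what is understood by ``$\frac{1}{\psi^{c}}$'' in the definitions of $\ti\sK_{c,k}$ and $\ti\sP_{c}$. Expanding the resulting rational function at $\hbar=0$,
\[
\frac{1}{\hbar(\hbar-w)} \;=\; -\sum_{k\ge 0}\frac{\hbar^{k-1}}{w^{k+1}},
\]
and multiplying by $\hbar^{-(c-1)}$, the coefficient of $\hbar^{-1}$ (the term with $k=c-1$) equals $-1/w^{c}=-1/\psi^{c}$.

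Assembling the two cases, the overall minus sign in the definition of $\ti\sZ_{j,k}$ cancels the minus from the residue, giving
\[
\mathrm{Res}_{\hbar=0}\frac{\ti\sZ_{0,k}}{\hbar^{c-1}} \;=\; \sum_{d\ge 1}(-\sQ)^{d}\!\!\sum_{(\ga)\in\hat\Xi^{0}_{d}}\!\!\int_{[\cW_{(\ga)}]^\vir}\frac{\ev^{*}\mT^{k}}{\psi^{c}}\,\frac{e\bl\pi_{\ga\ast}\cL_\ga\cL_\ft\cL_{\ft'}(-D_\ga)\br}{e(N^{\vir}_{\cW_{(\ga)}/\cW})}\;=\;\ti\sK_{c,k},
\]
and the identical argument applied to $\ti\sZ_{1,5}$, using $\ev^{*}\mT^{5}=\ev^{*}1=1$, gives $\ti\sP_{c}=\mathrm{Res}_{\hbar=0}\ti\sZ_{1,5}/\hbar^{c-1}$. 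The non-tilde identities then follow by extracting the $(\ft')^{1}$-coefficient on both sides, since $\sK_{c,k}$ (resp.\ $\sP_{c}$ and $\sZ_{j,k}$) is exactly the $(\ft')^{1}$-coefficient of its tilde counterpart by the expansion $e\bl\pi_{\ga\ast}\cL_\ga\cL_\ft\cL_{\ft'}(-D_\ga)\br = \sum_{\ell\ge 0}c_{\mathrm{top}_\ga-\ell}(\cdots)(\ft')^{\ell}$, and the residue commutes with taking coefficients in $\ft'$. The main (and really only) subtlety lies in justifying that the ``$\psi$'' appearing in the integrands of $\sK_{c,k},\sP_c$ on unstable vertices is literally the weight $w$ produced by the localization of $\frac{1}{\hbar(\hbar-\psi)}$ on the same vertex; this is the sole place where a careful appeal to the MSP localization conventions reviewed in Section 2 is required, and once granted, the whole lemma is a one-line residue computation for each graph.
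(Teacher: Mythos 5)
Your argument is correct and is essentially the paper's own proof: both split the graphs into those with the marking on an unstable $(1,\rho)$-leg (where $\psi$ is an invertible weight, so the expansion of $\frac{1}{\hbar(\hbar-\psi)}$ at $\hbar=0$ produces the $-1/\psi^{c}$ residue) and the rest (where $\psi$ is nilpotent, so only powers $\hbar^{-2}$ and lower appear and the residue vanishes). Your additional bookkeeping of the signs and of the $(\ft')^{1}$-coefficient extraction for the non-tilde identities is consistent with the paper's definitions and adds nothing that changes the approach.
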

\begin{proof}
 We prove the first two identities. In the case  $(\ga)\in\hat\Xi^0_d$, its $\psi$ is   invertible and the expansion near $\hbar=0$ gives
 $\frac{1}{\hbar-\psi}= \frac{-1}{\psi} (1+\frac{\hbar}{\psi} +\frac{\hbar^2}{\psi^2}+\cdots ).$
And in the case $(\ga)\in \Xi^0_d-\hat\Xi^0_d$, its $\psi^r$ will vanish on $[\cW_\ga]\virt$ for some $r$. Thus we can write 
$\frac{1}{\hbar-\psi}=\frac{1}{\hbar} (1+ \frac{\psi}{\hbar}+\dots+\frac{\psi^r}{\hbar^r})$, $\forall \hbar\neq 0$.  Therefore  such $\ga$ doesn't  contribute to the residue.
\end{proof}

\subsection{Calculations of $\ti\sZ_{1, k}$}
 For $k<5$, one has $\mT^k=p^{4-k}(p+\ft)$ and thus $\mT^k|_O=0$ implies $\ti\sZ_{1,k}=0$. 
 Thus 
 \begin{eqnarray*}
  && - \sum_{d=1}^\infty (-\sQ)^d \sum_{(\ga)\in \Xi^{1}_{d}} \int_{[\cW_{(\ga)}]\virt} \frac{1}{\hbar(\hbar-\psi)}   \frac{e(R\pi_{\ga\ast}(\cL_\ga \cL_{\ft} \cL_{\ft'}(-D_\ga)))}{e(N_{\cW_{(\ga)}/\cW}^{vir})}\ev^*p^k=(-\ft)^k\cdot \tilde \sZ_{1, 5}.\nonumber
   \end{eqnarray*}
  When $k=5$, by Theorem \ref{package-MSP-e}, we have  \begin{eqnarray}\label{Z1,5}
 \ti\sZ_{1,5} &= &   - \sum_{d=1}^\infty (-\sQ)^d \sum_{(\ga)\in \Xi^{1}_{d}} \int_{[\cW_{(\ga)}]\virt} \frac{1}{\hbar(\hbar-\psi)}   \frac{e(R\pi_{\ga\ast}(\cL_\ga \cL_{\ft} \cL_{\ft'}(-D_\ga)))}{e(N_{\cW_{(\ga)}/\cW}^{vir})} \nonumber\\
&=&-\frac{5}{\ft^4}+\frac{5}{\ft^4 I_0}   {\rm exp}(\frac{\ft (T-t)}{\hbar}-\frac{\ft'g_1}{I_0\hbar} )\cdot    \sum_{d=0}^\infty q^d \frac{  \prod_{m=1}^{5d}(-5\ft+m\hbar)  \prod_{m=1}^d(\ft'+m\hbar)}{d!\hbar^d\prod_{m=1}^d(-\ft+m\hbar)^5}.
 \end{eqnarray}

We write  $$ w:=-\frac{\ft}{\hbar}   , \  \sR(w,t):=\sum_{d=0}^\infty q^d  \frac{\prod_{k=1}^{5d}(5w+k)}{\prod_{k=1}^d(w+k)^5}
 \in \QQ[[q,w]], \    \Lambda:=  \frac{1}{I_0} e^{(t-T)w} \sR(w,t).$$

 \begin{coro}\label{coro0}

  \beq\label{Z15} \sZ_{1,5}={\rm Coe}_{(\ft')^0}  \ti\sZ_{1,5}= -\frac{5}{\ft^4}+\frac{5}{\ft^4} \Lambda. \eeq
  
  \begin{eqnarray}
  &&  \sum_{d=1}^\infty (-\sQ)^d \sum_{(\ga)\in \Xi^{1}_{d}} \int_{[\cW_{(\ga)}]\virt} \frac{1}{\hbar(\hbar-\psi)}   \frac{c_d(R\pi_{\ga\ast}(\cL_\ga \cL_{\ft} (-D_\ga)))}{e(N_{\cW_{(\ga)}/\cW}^{vir})}\ev^*p^k= 5(-\ft)^{k-4}\big(1-\Lambda\big).\nonumber
   \end{eqnarray}
  \end{coro}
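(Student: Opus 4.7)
The plan is to extract the constant-in-$\ft'$ coefficient from the closed-form expression \eqref{Z1,5} for $\ti\sZ_{1,5}$ supplied by Theorem~\ref{package-MSP-e}, and then convert that coefficient into the two claimed identities via the substitution $w=-\ft/\hbar$.

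First, I will specialize the right-hand side of \eqref{Z1,5} to $\ft'=0$. The exponential factor $\exp(-\ft' g_1/(I_0\hbar))$ collapses to $1$, while the product $\prod_{m=1}^d(\ft'+m\hbar)$ becomes $d!\,\hbar^d$, which exactly cancels the matching $d!\,\hbar^d$ in the denominator of the summand. Under the substitution $w=-\ft/\hbar$, the remaining numerator and denominator each scale by $\hbar^{5d}$ and the resulting ratio reduces to the hypergeometric series $\sR(w,t)$; meanwhile the exponent $\ft(T-t)/\hbar$ becomes $(t-T)w$. Assembling the pieces gives
\[
\sZ_{1,5}=-\frac{5}{\ft^4}+\frac{5}{\ft^4}\cdot\frac{1}{I_0}\,e^{(t-T)w}\sR(w,t)=-\frac{5}{\ft^4}+\frac{5}{\ft^4}\Lambda,
\]
which is \eqref{Z15}.

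For the second identity I will exploit that in $\Xi^1_d$ the first marking is pinned at the $T$-fixed point $O$ lying on a $V_1$-vertex, where $p|_O=-\ft$ and hence $\ev^* p^k=(-\ft)^k$ is a scalar that pulls out of the integral. What remains is an unweighted sum, which coincides with $-\sZ_{1,5}$: indeed $\mT^5=1$, and the $(\ft')^0$-coefficient of the equivariant Euler class $e(R\pi_{\ga*}(\cL_\ga\cL_\ft\cL_{\ft'}(-D_\ga)))$ is precisely the top Chern class $c_d(R\pi_{\ga*}(\cL_\ga\cL_\ft(-D_\ga)))$ entering the definition \eqref{Zjk} of $\sZ_{1,5}$. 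Combining with \eqref{Z15} and the arithmetic identity $(-\ft)^k/\ft^4=(-\ft)^{k-4}$ produces
\[
(-\ft)^k\cdot\frac{5}{\ft^4}(1-\Lambda)=5(-\ft)^{k-4}(1-\Lambda),
\]
as required.

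The main subtlety to handle carefully is the matching between the $(\ft')^0$-coefficient of the equivariant Euler class and the top Chern class $c_d$ of the non-equivariant bundle, together with the verification that the contributions from graphs with $\mT^k|_O=0$ (i.e.\ $k<5$) make no spurious contribution once one rewrites the $\mT^k$-weighted invariants in terms of $p^k$-weighted ones. Once this rank bookkeeping is in place, both identities are direct specializations of Theorem~\ref{package-MSP-e} and require no further geometric input.
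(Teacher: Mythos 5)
Your proposal is correct and follows essentially the same route as the paper: both identities are read off from the closed form \eqref{Z1,5} of Theorem \ref{package-MSP-e} by setting $\ft'=0$ (so that $\prod_{m=1}^d(\ft'+m\hbar)$ cancels the $d!\hbar^d$ and the series collapses to $\Lambda$ under $w=-\ft/\hbar$), combined with the observation made just before the corollary that $\ev^*p^k=(-\ft)^k$ on $\Xi^1_d$ and that the $(\ft')^0$-coefficient of the equivariant Euler class is $c_{{\rm top}_\ga}=c_d$. Your sign bookkeeping (the extra minus from the definition of $\ti\sZ_{1,5}$) is also handled correctly.
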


\subsection{Calculation of  $\sZ_{0,k}$}

Assume $k<5$, then $\mT^k=p^{4-k}(p+\ft)$. We have for each $(\ga)\in\Xi^0_d$, 
  $$\pi_{\ga\ast}(\cL_\ga\cL_\ft)=\pi_{\ga\ast}(\cL_\ga\cL_\ft(-D_\ga)) + \pi_{\ga\ast} (\cL_\ga\cL_\ft|_{D_\ga})=\pi_{\ga\ast}(\cL_\ga\cL_\ft(-D_\ga)) + \ev\sta L_HL_\ft,$$
  as classes in $K$-groups, and thus
  $$c_d(\pi_{\ga\ast}(\cL_\ga\cL_\ft))=e\bl \pi_{\ga\ast}(\cL_\ga\cL_\ft(-D_\ga)) \br+
  c_{d-1} \bl \pi_{\ga\ast}(\cL_\ga\cL_\ft(-D_\ga)) \br \cdot   c_1 ( \ev\sta L_HL_\ft ) ,$$
 \begin{eqnarray} \label{z0k}
\sZ_{0,k} &=&  - \sum_{d=1}^\infty (-\sQ)^d    \sum_{(\ga)\in \Xi^{0}_{d}} \int_{[\cW_{(\ga)}]\virt} \frac{1}{\hbar(\hbar-\psi)}\ev\sta p^{4-k}(p+\ft )  \frac{c_{d-1}(R\pi_{\ga\ast}(\cL_\ga \cL_{\ft} (-D_\ga)))}{e_G(N_{\cW_{(\ga)}/\cW}^{vir})} \nonumber \\
 &=&  - \sum_{d=1}^\infty (-\sQ)^d    \sum_{(\ga)\in \Xi^{0}_{d}} \int_{[\cW_{(\ga)}]\virt} \frac{1}{\hbar(\hbar-\psi)}\ev\sta p^{4-k}   \frac{c_d(\pi_{\ga\ast}(\cL_\ga \cL_{\ft}))   }{e_G(N_{\cW_{(\ga)}/\cW}^{vir})}  \\
 &&  + \sum_{d=1}^\infty (-\sQ)^d     \sum_{(\ga)\in \Xi^{0}_{d}} \int_{[\cW_{(\ga)}]\virt} \frac{1}{\hbar(\hbar-\psi)}\ev\sta p^{4-k}   \frac{ e\bl \pi_{\ga\ast}(\cL_\ga\cL_\ft(-D_\ga)) \br }{e_G(N_{\cW_{(\ga)}/\cW}^{vir})}\nonumber
                   \end{eqnarray}

           The two integrals in \eqref{z0k} can be separately reduced as follows.

\noindent{\bf [i].}
     The first integral in \eqref{z0k} can be calculated as 
           \begin{eqnarray}\label{z0kf1} &&- \sum_{d=1}^\infty (-\sQ)^d    \sum_{(\ga)\in \Xi^{0}_{d}} \int_{[\cW_{(\ga)}]\virt} \frac{1}{\hbar(\hbar-\psi)}ev\sta p^{4-k}   \frac{c_d(\pi_{\ga\ast}(\cL_\ga \cL_{\ft}))   }{e_G(N_{\cW_{(\ga)}/\cW}^{vir})}\nonumber \\
             &=&- \sum_{d=1}^\infty (-\sQ)^d  {\rm Coe}_{\ft'}   \int_{[\cW_{0,\1rho,(d,0)}]\virt}  \frac{1}{\hbar(\hbar-\psi)}\ev\sta p^{4-k}   \cdot e(\pi_{\cW\ast}(\cL_\cW \cL_{\ft}\cL_{\ft'}))  \\
             &+&  \sum_{d=1}^\infty (-\sQ)^d     \sum_{(\ga)\in   \Xi^{1}_d} \int_{[\cW_{(\ga)}]\virt} \frac{1}{\hbar(\hbar-\psi)}\ev\sta p^{4-k}   \frac{c_d(\pi_{\ga\ast}(\cL_\ga \cL_{\ft} ))   }{e_G(N_{\cW_{(\ga)}/\cW}^{vir})},\nonumber
           \end{eqnarray}
           where $\pi_\cW$ is the projection from the universal family  $\sC_\cW$ to $\sW$ and $\cL_\cW$ is the pull-back of  $\sO_{\mathbb P^5}(1)$ via the universal family map 
           $\sC_\cW \to \mathbb P^5$.    Denote the first term in \eqref{z0kf1}  as           \begin{eqnarray*}    \fB_k     \black  :=  -  {\rm Coe}_{\ft'} \sum_{d=1}^\infty (-\sQ)^d     \int_{[\cW_{0,\1rho,(d,0)}]\virt}  \frac{1}{\hbar(\hbar-\psi)}\ev\sta p^{4-k}   \cdot e(\pi_{\cW\ast}(\cL_\cW \cL_{\ft}\cL_{\ft'})).      \end{eqnarray*}
           The second term in \eqref{z0kf1} is, by Corollary \ref{coro0},
           \begin{eqnarray*}
          &&\sum_{d=1}^\infty (-\sQ)^d     \sum_{(\ga)\in   \Xi^{1}_d} \int_{[\cW_{(\ga)}]\virt} \frac{1}{\hbar(\hbar-\psi)}\ev\sta p^{4-k}   \frac{c_d(\pi_{\ga\ast}(\cL_\ga \cL_{\ft} ))   }{e_G(N_{\cW_{(\ga)}/\cW}^{vir})} \\
&= &5(-\ft)^{-k} \Bigg(1 -  \frac{1}{I_0} e^{\frac{\ft}{\hbar}\cdot (T-t)}  \sum_{d=0}^\infty q^d \frac{  \prod_{m=1}^{5d}(-5\ft+m\hbar)  }{ \prod_{m=1}^d(-\ft+m\hbar)^5}  \Bigg).
           \end{eqnarray*}



\noindent {\bf [ii].}   The second integral in \eqref{z0k}, denoted by $\fR_k$,  is only contributed by those $\ga\in \Xi^0_d$ which have only one vertex $v$ \black in $V_0$ (denote such graph as $\ga_v$),  otherwise,  $\nu_1$ makes $ e\bl \pi_{\ga\ast}(\cL_\ga\cL_\ft(-D_\ga)) \br $ vanish.                  More precisely     \begin{eqnarray*}
   \fR_k 
&=&\sum_{d=1}^\infty (-\sQ)^d      \int_{(-1)^{d+1}[\barM_{0,1}(Q,d)]\virt}    \frac{1}{\hbar^2} (1+ \frac{\psi}{\hbar})\cdot \ev\sta h^{4-k}   \cdot    \frac{1}{ e\bl \pi_{\ga\ast}(\cL_\ga\cL_\ft|_{D_\ga}) \br  } \\
&=& - \sum_{d=1}^\infty \sQ^d      \int_{[\barM_{0,1}(Q,d)]\virt}    \frac{1}{\hbar^2} (1+ \frac{\psi}{\hbar}) \cdot \ev\sta h^{4-k}   \cdot   \ev\sta (1-\frac{h}{\ft})  \frac{1}{\ft},
         \end{eqnarray*}
         where $h$ is the hyperplane class in $\mathbb P^4$. 
         We calculate $ \fR_k$ for different $k$'s.

                  \begin{eqnarray*}
      \fR_4      &=&         - \sum_{d=1}^\infty \sQ^d      \int_{[\barM_{0,1}(Q,d)]\virt}    \frac{1}{\hbar^2}    \cdot   \ev\sta (-\frac{h}{\ft})  \frac{1}{\ft}
               - \sum_{d=1}^\infty \sQ^d      \int_{[\barM_{0,1}(Q,d)]\virt}    \frac{1}{\hbar^2}  \frac{\psi}{\hbar}  \cdot     \frac{1}{\ft} \\
               &=& \sum_{d=1}^\infty \sQ^d \bl  \frac{1}{\hbar^2\ft^2}  dN_{0,d} -\frac{1}{\hbar^3\ft} (-2) N_{0,d} \br.
                  \end{eqnarray*}

                  Let $k<4$. The integral $ \fR_k$ becomes
                  \begin{eqnarray*}
                  \begin{aligned}
                  &  - \sum_{d=1}^\infty \sQ^d      \int_{[\barM_{0,1}(Q,d)]\virt}    \frac{1}{\hbar^2}   \ev\sta  h^{4-k}   \cdot     \frac{1}{\ft}  
            =    \begin{cases}
			\displaystyle{  - \sum_{d=1}^\infty \sQ^d  \frac{1}{\hbar^2\ft} d N_{0,d}   },
			& k=3; \\
			\displaystyle{0 }
				& k=0,1,2.
		\end{cases}
		\end{aligned}
		\end{eqnarray*}

                    Therefore we proved the following.

                    \begin{lemm}\label{main} For $k=0,1,\cdots,4$,  we have
                     \begin{eqnarray*} \sZ_{0,k}= \fB_k +   5(-\ft)^{-k} \Big(1- \Lambda  \Big)   + \fR_k,
                     \end{eqnarray*}
 where $\fR_k=0$ for $k=0,1,2$ and
 $$\fR_3=   - \sum_{d=1}^\infty \sQ^d  \frac{1}{\hbar^2\ft} d N_{0,d} 
   , \qquad \qquad \fR_4=\sum_{d=1}^\infty \sQ^d \bl  \frac{1}{\hbar^2\ft^2}  dN_{0,d} -\frac{1}{\hbar^3\ft} (-2) N_{0,d} \br. $$
                     \end{lemm}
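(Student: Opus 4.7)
The plan is to prove Lemma \ref{main} by reducing $\sZ_{0,k}$ (for $k<5$) to three separately identifiable pieces via the natural splitting of the top Chern class $c_d(\pi_{\ga\ast}(\cL_\ga\cL_\ft))$. Starting from the short exact sequence $0\to \cL_\ga\cL_\ft(-D_\ga)\to \cL_\ga\cL_\ft\to \cL_\ga\cL_\ft|_{D_\ga}\to 0$ and pushing forward under $\pi_\ga$, one obtains the $K$-theoretic identity
\begin{equation*}
c_d\bigl(\pi_{\ga\ast}(\cL_\ga\cL_\ft)\bigr)=e\bigl(\pi_{\ga\ast}(\cL_\ga\cL_\ft(-D_\ga))\bigr)+c_{d-1}\bigl(\pi_{\ga\ast}(\cL_\ga\cL_\ft(-D_\ga))\bigr)\cdot c_1(\ev^*L_HL_\ft).
\end{equation*}
Combined with $\mT^k=p^{4-k}(p+\ft)$ and the definition of $\sZ_{0,k}$, this writes $\sZ_{0,k}$ as the sum of the two localized integrals appearing in \eqref{z0k}.

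For the first integral (involving $c_d$), my strategy is to extend the sum over $\Xi^0_d$ to the full moduli $\cW_{0,\1rho,(d,0)}$ by adding and subtracting the $\Xi^1_d$ contribution. The full-moduli part, once recognized as the $\ft'$-coefficient of $e(\pi_\ast(\cL\cL_\ft\cL_{\ft'}))$, is by definition $\fB_k$. The subtracted $\Xi^1_d$ piece, on which the marking sits on the $\infty$-section so that $\ev^*p=-\ft$, can be evaluated in closed form via Corollary \ref{coro0}, yielding exactly $5(-\ft)^{-k}(1-\Lambda)$.

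For the second integral $\fR_k$, the crucial observation is the vanishing: if $\ga\in\Xi^0_d$ has at least two vertices in $V_0$, then the cosection $\nu_1$ acting on the $V_1$-portion of $\pi_{\ga\ast}(\cL_\ga\cL_\ft(-D_\ga))$ forces $e\bigl(\pi_{\ga\ast}(\cL_\ga\cL_\ft(-D_\ga))\bigr)=0$. Thus only single-$V_0$-vertex graphs $\ga_v$ contribute. For such $\ga_v$, combining the unstable-vertex convention $[\cW_v]\virtloc=-[Q_5]$ with the stable-map identification $[\cW_v]\virt=(-1)^{d+1}[\barM_{0,1}(Q,d)]\virt$ reduces $\fR_k$ to the explicit expression
\begin{equation*}
\fR_k=-\sum_{d=1}^\infty \sQ^d\int_{[\barM_{0,1}(Q,d)]\virt}\tfrac{1}{\hbar^2}\Bigl(1+\tfrac{\psi}{\hbar}\Bigr)\ev^*h^{4-k}\cdot\ev^*\Bigl(1-\tfrac{h}{\ft}\Bigr)\cdot\tfrac{1}{\ft}.
\end{equation*}

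Finally I would evaluate this reduced integral case by case. Since $\vdim\barM_{0,1}(Q,d)=1$, the insertion $\ev^*h^{4-k}$ is non-vanishing only when $4-k\le 1$, hence $\fR_k=0$ for $k\in\{0,1,2\}$. For $k=3$, only the term $\int\ev^*h\cdot\frac{1}{\hbar^2\ft}$ survives, and the divisor equation converts it to $dN_{0,d}/(\hbar^2\ft)$. For $k=4$, both $\int\frac{\psi}{\hbar^3\ft}$ and $\int\frac{\ev^*h}{\hbar^2\ft^2}$ contribute; the string equation gives $-2N_{0,d}$ from the first and the divisor equation gives $dN_{0,d}$ from the second, reproducing the stated formula for $\fR_4$. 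Assembling $\fB_k$, $5(-\ft)^{-k}(1-\Lambda)$, and $\fR_k$ then yields the lemma. The main obstacle I anticipate is the careful bookkeeping of signs and conventions across the three stages, in particular the $(-1)^{d+1}$ factor from identifying the MSP virtual class on the quintic stable-maps moduli, the $-[Q_5]$ convention at the unstable $V_0$ vertex, and the extraction of the $\ft'$-coefficient defining $\fB_k$; once these are aligned the three pieces combine directly.
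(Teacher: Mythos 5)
Your proposal is correct and follows essentially the same route as the paper: the $K$-theoretic splitting of $c_d(\pi_{\ga\ast}(\cL_\ga\cL_\ft))$, completion of the $\Xi^0_d$ sum to the full moduli (giving $\fB_k$) minus the $\Xi^1_d$ piece evaluated via Corollary \ref{coro0}, the $\nu_1$-vanishing reducing $\fR_k$ to single-$V_0$-vertex graphs, and the dimension count on $\barM_{0,1}(Q,d)$. The only quibbles are terminological: $\nu_1$ is a section (not the cosection), and the evaluation $\int_{[\barM_{0,1}(Q,d)]\virt}\psi=-2N_{0,d}$ is the dilaton equation rather than the string equation.
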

                  We now calculate $\fB_k$ for $k=0,1,2$.
                       \begin{eqnarray*}   \fB_0     &=&  - \frac{1}{\ft} {\rm Coe}_{\ft'} \sum_{d=1}^\infty (-\sQ)^d     \int_{[\cW_{0,\1rho,(d,0)}]\virt}  \frac{1}{\hbar(\hbar-\psi)}\ev\sta p^{4} (p+\ft)  \cdot e(\pi_{\cW\ast}(\cL_\cW \cL_{\ft}\cL_{\ft'}))  \\
                      &+&  \frac{1}{\ft}  {\rm Coe}_{\ft'} \sum_{d=1}^\infty (-\sQ)^d     \int_{[\cW_{0,\1rho,(d,0)}]\virt}  \frac{1}{\hbar(\hbar-\psi)}\ev\sta p^5  \cdot e(\pi_{\cW\ast}(\cL_\cW \cL_{\ft}\cL_{\ft'})) \\
                       &=&     \frac{-\ft^5}{\ft}   \sum_{d=1}^\infty (-\sQ)^d   \sum_{(\ga)\in\Xi_d^1}  \int_{[\cW_{(\ga)}]\virt}  \frac{1}{\hbar(\hbar-\psi)}   \cdot \frac{c_{{\rm top-}1}(\pi_{\ga\ast}(\cL_\ga \cL_{\ft} ))}{e(N_{\cW_\ga/\cW}\virt)} \\
                       &=& -\ft^4 \cdot \bigg(   \frac{5}{\ft^4}+ \frac{-5}{\ft^4}\frac{1}{I_0}  {\rm exp}(\frac{\ft (T-t)}{\hbar})\cdot    \sum_{d=0}^\infty q^d \frac{  \prod_{m=1}^{5d}(-5\ft+m\hbar)   }{ \prod_{m=1}^d(-\ft+m\hbar)^5}\bigg)
                 =5\Lambda(w,t)-5,      
                           \end{eqnarray*}
                    where the third  identity is by $G$-localization and  the last line comes from Theorem \ref{package-MSP-c}.

                    Taking ${\rm Coe}_{p}$ of the Corollary \ref{closer1},     one has   
                    \begin{eqnarray*}   \fB_1       &=&  -  \frac{1}{\ft}    \sum_{d=1}^\infty (-\sQ)^d     \int_{[\cW_{0,\1rho,(d,0)}]\virt}  \frac{1}{\hbar(\hbar-\psi)}\ev\sta p^{3}(p+\ft)   \cdot c_{{\rm top}-1}(\pi_{\cW\ast}(\cL_\cW \cL_{\ft}))    \\
           &+&        \frac{1}{\ft}    \sum_{d=1}^\infty (-\sQ)^d     \int_{[\cW_{0,\1rho,(d,0)}]\virt}  \frac{1}{\hbar(\hbar-\psi)}\ev\sta p^4   \cdot c_{{\rm top-}1}(\pi_{\cW\ast}(\cL_\cW \cL_{\ft}))    \\
           &=&   - \frac{5}{\ft}\Lambda(w,t)   +         \frac{ - 5w }{\ft I_0}   \bigg[ (-\frac{I_0}{w}
- g_1)      +   \sum_{d=1}^\infty q^d \frac{ (5d)!   }{ (d!)^5}(\sum_{m=1}^d \frac{1}{m -w })   \bigg]   . 
                    \end{eqnarray*}

              Taking  ${\rm Coe}_{p^2}$ of  Corollary \ref{closer1}, we get

                            \begin{eqnarray*}   \fB_2   &=& -  \frac{1}{\ft} \sum_{d=1}^\infty (-\sQ)^d     \int_{[\cW_{0,\1rho,(d,0)}]\virt}  \frac{1}{\hbar(\hbar-\psi)}\ev\sta p^{2}(p+\ft)   \cdot c_{{\rm top}-1}(\pi_{\cW\ast}(\cL_\cW \cL_{\ft}))  \\
                           && +\frac{1}{\ft} \sum_{d=1}^\infty (-\sQ)^d     \int_{[\cW_{0,\1rho,(d,0)}]\virt}  \frac{1}{\hbar(\hbar-\psi)}\ev\sta p^{3}   \cdot c_{{\rm top}-1}(\pi_{\cW\ast}(\cL_\cW \cL_{\ft}))  \\
       &=&   -\frac{1}{\ft}\fB_1  -\frac{5}{I_0}\cdot         \frac{ g_1}{\hbar \ft}   \\
       && +  \frac{5 }{I_0}
          \sum_{d=1}^\infty q^d \frac{(5d)! }{(d!)^{5} }
   \bigg(  \big(  \frac{1}{\ft}-\frac{T-t}{\hbar} +\sum_{m=d+1}^{5d}\frac{5}{m\hbar} \big) \sum_{m=1}^d \frac{1}{\ft+m\hbar}   -    \sum_{m=1}^d \frac{ 1}{(\ft+m\hbar)^2}  \bigg).     \\
 \end{eqnarray*}

 Here we omit    detailed calculations obtaining the second equality above.

                    \subsection{Formula of $\sK_{1,2}$ and $\sK_{2,1}$}

    For $k=1,2$ ,    we can now apply $  \sK_{c,k}=  {\rm Res}_{\hbar=0}  \frac{\sZ_{0,k}}{\hbar^{c-1}} $.

   Now we look at the case $k=1$.
The residue at $\hbar=0$ of $5\ft^{-1} \big(1- \Lambda  \big)   + \fR_1$ divided by $\hbar$ is 
 \begin{eqnarray*}    \frac{5}{\ft}\bigg( 1- {\rm Res}_{\hbar=0} \frac{\Lambda}{\hbar} d\hbar \bigg)
   =      \frac{5}{\ft}\bigg( 1- {\rm Res}_{w=\infty} \Lambda \cdot  (-\ft)\cdot  \frac{-dw}{w^2}\cdot \frac{-w}{\ft} \bigg) = \frac{5}{\ft}\bigg( 1+ {\rm Res}_{w=\infty} \Lambda \frac{dw}{w}   \bigg).
 \end{eqnarray*}   
 
From
\begin{eqnarray*}&&\fB_1
 =   - \frac{5}{\ft}\Lambda(w,t)+ \frac{ - 5w }{\ft I_0}   \bigg[
- g_1(q)      +   \sum_{d=1}^\infty q^d
\frac{ (5d)!   } { (d!)^5}(\sum_{m=1}^d \frac{1}{m -w })   \bigg]   +\frac{5}{\ft} ,
\end{eqnarray*}
one calculates ${\rm Res}_{w=\infty} \frac{dw}{m-w}=1$ for all $m\in\ZZ_{\geq 0}$ and thus
    $${\rm Res}_{w=\infty}  \fB_1 \frac{dw}{w}=  - \frac{5}{\ft} \bl {\rm Res}_{w=\infty}  \Lambda(w,t) \frac{dw}{w} + \frac{1}{I_0}\frac{d I_0}{dt} \br -\frac{5}{\ft}.$$
 Thus we have
   \begin{eqnarray*} &&  \sK_{2,1}=     {\rm Res}_{\hbar=0}  \frac{\fB_1}{\hbar}- \frac{5}{\ft}\bigg( 1+ {\rm Res}_{w=\infty} \Lambda \frac{dw}{w}   \bigg)\nonumber 
    = \frac{5}{\ft}  \frac{1}{I_0}\frac{d I_0}{dt}.  
   \end{eqnarray*}
 
 For later purpose, we also use ${\rm Res}_{w=\infty} \frac{1}{m-w}\frac{dw}{w}=0$ for all $m\in\ZZ_{\geq 0}$ to calculate
 \begin{eqnarray*}
 && {\rm Res}_{\hbar=0} \fB_1={\rm Res}_{\hbar=0} \fB_1 d\hbar   =\ft {\rm Res}_{w=\infty} \fB_1 \frac{dw}{w^2}\\
 &=&-  5   {\rm Res}_{w=\infty}\Lambda \frac{dw}{w^2} -\frac{5g_1}{  I_0}
 - \frac{   5  }{  I_0}  \sum_{d=0}^\infty q^d
\frac{ (5d)!   } { (d!)^5}(\sum_{m=0}^d  {\rm Res}_{w=\infty}  \frac{1}{m -w }\frac{dw}{w })  \\
& =& -  5  {\rm Res}_{w=\infty} \Lambda \frac{dw}{w^2} -\frac{5g_1}{  I_0} . \end{eqnarray*}

   \black

 We now calculate $\sK_{1,2}$.  Using $d\hbar=\ft\frac{dw}{w^2}$,
  \begin{eqnarray*}
&& {\rm Res}_{\hbar=0} \fB_2\\
 &=&
  \frac{1}{\ft}(- 5  {\rm Res}_{w=\infty} \Lambda \frac{dw}{w^2} +\frac{5g_1}{  I_0}) - \frac{5}{I_0}\cdot   \frac{ g_1}{  \ft}      +  \frac{5 }{I_0}        \sum_{d=1}^\infty q^d \frac{(5d)! }{(d!)^{5} }
   \bigg(  \frac{(t-T)d}{\ft} +  \sum_{m=d+1}^{5d} \frac{5d}{m\ft}   \bigg) \\
   &=&
   \frac{5}{\ft}       {\rm Res}_{w=\infty} \Lambda \frac{dw}{w^2}       +  \frac{5  }{I_0\ft}        \sum_{d=1}^\infty d q^d \frac{(5d)! }{(d!)^{5} }
   \bigg(   -(T-t)+ \sum_{m=d+1}^{5d} \frac{5 }{m }   \bigg). \\
 \end{eqnarray*}

 By Lemma \ref{main},  $ \sZ_{0,2}=\fB_2 +   5\ft^{-2 }  (1- \Lambda  )$. Thus $\sK_{1,2}=    {\rm Res}_{\hbar=0}  \sZ_{0,2} $ gives
\begin{eqnarray} \sK_{1,2} 
 &=&  {\rm Res}_{\hbar=0} \fB_2+\frac{-5}{\ft}\bigg(  {\rm Res}_{w=\infty} \Lambda \cdot     \frac{dw}{w^2} \bigg) \nonumber= \frac{5  }{\ft I_0 }        \sum_{d=1}^\infty d q^d \frac{(5d)! }{(d!)^{5} }
   \bigg(    t-T+ \sum_{m=d+1}^{5d} \frac{5 }{m }   \bigg) \nonumber \\
   &=&\frac{5  }{\ft I_0 } \big[ q\frac{d}{dq}(I_1- t I_0)  - (T-t) \cdot q\frac{d}{dq}I_0 \big].
 \end{eqnarray}
  \begin{lemm}
  \beq\label{GOD}   \sK_{1,2}= \frac{5}{\ft} q\frac{d}{dq}(T-t)=\frac{5}{\ft} \frac{d}{dt}(T-t) \and  \sK_{2,1}=   \frac{5}{\ft}  \frac{1}{I_0}\frac{d I_0}{dt}.  \eeq
\end{lemm}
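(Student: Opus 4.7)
The plan is to obtain the Lemma as a direct consequence of the formulas derived immediately before its statement: the identity for $\sK_{2,1}$ is already in the stated closed form, so only the simplification of $\sK_{1,2}$ needs work. Concretely, I would invoke the formula
\[
\sK_{1,2} \;=\; \frac{5}{\ft I_0}\bigl[\, q\tfrac{d}{dq}(I_1-tI_0)\;-\;(T-t)\,q\tfrac{d}{dq}I_0\,\bigr]
\]
and reduce it to $\frac{5}{\ft}\,q\tfrac{d}{dq}(T-t)$ using the definition $T=I_1/I_0$.

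The key algebraic step is the observation that $I_1 - t\,I_0 = (T-t)\,I_0$, which is immediate from $I_1 = T\cdot I_0$. Applying the derivation $q\tfrac{d}{dq}$ (a derivation on the ring of $q$-series), the Leibniz rule gives
\[
q\tfrac{d}{dq}\bigl((T-t)I_0\bigr) \;=\; (T-t)\,q\tfrac{d}{dq}I_0 \;+\; I_0\cdot q\tfrac{d}{dq}(T-t).
\]
Subtracting the first term on the right cancels the $(T-t)\,q\tfrac{d}{dq}I_0$ piece in the bracket, leaving $I_0\cdot q\tfrac{d}{dq}(T-t)$, and the factor $I_0$ then cancels against the $1/I_0$ in front. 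The identification $q\tfrac{d}{dq}=\tfrac{d}{dt}$ follows from $q=e^t$, giving the second form.

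For $\sK_{2,1}$, no further work is needed: the derivation of
\[
\sK_{2,1} \;=\; \operatorname{Res}_{\hbar=0}\frac{\fB_1}{\hbar} \;-\; \frac{5}{\ft}\Bigl(1+\operatorname{Res}_{w=\infty}\Lambda\,\tfrac{dw}{w}\Bigr)
\;=\; \frac{5}{\ft}\,\frac{1}{I_0}\,\frac{dI_0}{dt}
\]
was carried out above, where the $\Lambda$-residue cancelled against the corresponding piece in $\fB_1$, leaving precisely the logarithmic derivative of $I_0$ (coming from the term $-g_1/I_0$ after noting $\tfrac{d}{dt}I_0 = \tfrac{dg_1}{dt}$ via the explicit expansions of $I_0$ and $g_1$).

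The only potential obstacle is bookkeeping: ensuring that the residue computations used to get the pre-stated formula for $\sK_{1,2}$ (in particular the vanishing ${\rm Res}_{w=\infty}\frac{1}{m-w}\frac{dw}{w}=0$ versus ${\rm Res}_{w=\infty}\frac{dw}{m-w}=1$) were applied consistently; granted those, the remaining argument is the two-line Leibniz manipulation above. Since both pieces are algebraic identities between explicit $q$-series, no further geometry intervenes in this final step.
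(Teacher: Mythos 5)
Your proof is correct and follows the paper's route exactly: the lemma is simply the repackaging of the formulas for $\sK_{1,2}$ and $\sK_{2,1}$ derived immediately before it, and the only genuinely new content is the Leibniz step $q\frac{d}{dq}(I_1-tI_0)=q\frac{d}{dq}\big((T-t)I_0\big)=(T-t)\,q\frac{d}{dq}I_0+I_0\,q\frac{d}{dq}(T-t)$, which you carry out correctly. One correction to your aside on $\sK_{2,1}$: the identity $\frac{d}{dt}I_0=\frac{dg_1}{dt}$ that you invoke is false (already the $q^2$ coefficients disagree), and in fact the term $-g_1/I_0$ in $\fB_1$ contributes nothing to ${\rm Res}_{w=\infty}\,\fB_1\,\frac{dw}{w}$, since after multiplying by $\frac{w\,dw}{w}$ it becomes a constant multiple of $dw$, whose residue at infinity vanishes; the logarithmic derivative $\frac{1}{I_0}\frac{dI_0}{dt}$ instead arises from $\sum_{m=1}^{d}{\rm Res}_{w=\infty}\frac{dw}{m-w}=d$, which turns the hypergeometric sum into $q\frac{d}{dq}I_0$. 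Since the closed form of $\sK_{2,1}$ was already established in the text and you only cite it, this slip does not invalidate your argument, but the attribution of where that term comes from should be fixed.
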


 \subsection{Relations obtained by substituting $\ft'$ by $-\ft$}

   The trick used in previous calulation of $\sZ_{0,k}$ may be used to calculate $\ti\sZ_{0,k}(\ft,-\ft)$. For each $(\ga)\in\Xi^0_d$, we have 
$$\pi_{\ga\ast}(\cL_\ga )=\pi_{\ga\ast}(\cL_\ga (-D_\ga)) + \pi_{\ga\ast} (\cL_\ga |_{D_\ga})=\pi_{\ga\ast}(\cL_\ga (-D_\ga)) + \ev\sta L_H ,$$
 and thus
  $$e(\pi_{\ga\ast}(\cL_\ga ))=
  e \bl \pi_{\ga\ast}(\cL_\ga (-D_\ga)) \br \cdot \ev\sta H . $$
 
  Suppose $k<4$ and thus $\mT^k=p^{4-k}(p+\ft)$. Set
 \begin{eqnarray*}
 \ti\sZ_{0,k}(\ft,-\ft) &:=&  - \sum_{d=1}^\infty (-\sQ)^d \sum_{(\ga)\in \Xi^{0}_{d}} \int_{[\cW_{(\ga)}]\virt} \frac{1}{\hbar(\hbar-\psi)}ev\sta \mT^k  \frac{e(R\pi_{\ga\ast}(\cL_\ga  (-D_\ga)))}{e(N_{\cW_{(\ga)}/\cW}^{vir})} \\
  &=&  - \sum_{d=1}^\infty (-\sQ)^d \sum_{(\ga)\in \Xi^{0}_{d}\cup \Xi^{1}_{d}} \int_{[\cW_{(\ga)}]\virt} \frac{1}{\hbar(\hbar-\psi)}ev\sta \mT^{k+1}  \frac{e(R\pi_{\ga\ast} \cL_\ga )}{e(N_{\cW_{(\ga)}/\cW}^{vir})} \\
    &=&  - \sum_{d=1}^\infty (-\sQ)^d   \int_{[\cW_{0,\1rho,(d,0)}]\virt} \frac{1}{\hbar(\hbar-\psi)}ev\sta \mT^{k+1}   e(R\pi_{\ga\ast} \cL_\ga ). 
        \end{eqnarray*}
 In the case $k=1$, $\mT^2=p^2(p+\ft)$.  Taking coefficient of $p^2$ in \eqref{Gosh},  Corollary \ref{closer0} implies
  \begin{eqnarray}\label{ghost0}
 \ti\sZ_{0,1}(\ft,-\ft) 
&=&    -5+   \bigg[ \frac{5 }{I_0} e^{\frac{\ft}{\hbar}( \frac{g_1}{I_0}  )}
    \sum_{d=0}^\infty q^d
\frac{(5d)! \hbar^{d} }
{(d!)^4 \prod_{m=1}^d (\ft+m\hbar)}  \bigg]. 
    \end{eqnarray}

 \section{Graph Type A}
 
  In genus one cap-MSP,  we apply localization formula to
   \beq\label{zero0} 0=\int_{[\cW_{g=1,\emptyset,(n,0)}]\virt} \ft \ c_{n-1}(R\pi_{\cW\ast}\cL_\cW \cN_\cW \cL_\ft)\eeq for all $n$.
 We say a graph $\Ga$  is of type $A$ if for some $v\in V_0(\ga)$ we have  $g_v=1$.

  \subsection{Packaging graph $A_+$ and $A_{\rm single}$ using $\sK$ series}

  
  If $\ga$ is of type $A$ with $d_v>0$, one can assume $\ga$ has a single vertex at $V_0$ without edges, or $\ga$ is as Figure 1 below. 
   All other type of graphs have zero contribution to \eqref{zero0}. This is because if $\ga$ has more than one vertices at $V_1$, 
   one removes $v$ from $\ga$ to obtain connected graphs   $\ga_i$'s for $i=1,\cdots,\ell$, with $\ell>1$. This expresses $\cC_{\cW_\ga}$ as a union of $\cC_{\cW_v}$ and a family of rational curves   $\cC_{\cW_{\ga_i}}$.
      Then $\{\nu_1|_{\cC_{\ga_i}}\}_{i=1}^\ell$ are nowhere zero sections  of the locally free sheaves $\{R\pi_{\cW_{\ga_i}\ast}( \cL_\cW\cN_\cW\cL_\ft)|_{\cC_{\cW_{\ga_i}}}\}_{i=1}^\ell$. Since
$$        c(R\pi_{\cW_v\ast}(\cL_\cW \cN_\cW \cL_\ft))|_{\cC_{\cW_v}}\cap  [\cW_v]\virt  \cong  c(\CC^{d_v}\ft) \cap  [\cW_v]\virt $$
 are chern classes of a bundle,    the fact $n=\rank R\pi_{\cW\ast}\cL_\cW \cN_\cW \cL_\ft$ and the  condition     $\ell>1$  imply $c_{n-1}(R\pi_{\cW\ast}\cL_\cW\cN_\cW\cL_\ft)$ vanishes on $\cW_\ga$.

\begin{figure}[h]
 \begin{center}
\begin{picture}(-20,12)

\put(-36,14){\circle*{1}}

 \put(-36,13){\line(0,-1){10}}
 \put(-35,7){$e$}

\put(-23,14){\circle*{1}}
 \put(-23,13){\line(0,-1){10}}
 \put(-22,7){$e_1$}
\put(-24,0){$v_1$}

\put(-37, 0){$v$}

\put(-11,14){\circle*{1}}
\put(-6,9){$\cdots$}
 \put(-11,13){\line(0,-1){10}}
\put(-10,7){$e_2$}
\put(-12,0){$v_2$}

\put(2,14){\circle*{1}}
 \put(2,13){\line(0,-1){10}}
\put(3,7){$e_k$}
\put(1,0){$v_k$}

 \put(-35,14){\line(1,0){40}}

\put(7, 13){$ v_0$}
\put(-50, 12){$1$}


\put(-50,2){$0$}

\end{picture}
\end{center}

\caption{The labelled graph $\Gamma_A(d_v,d_e,d_1,\ldots, d_k)$: $g_v=1$; $g_{v_0}=0$;
for $i=1,\ldots,k$, $d_i:=d_{e_i}$; $v_i\in V_0$,  $g_{v_i}=0$ and $d_{v_i}\in\ZZ_{\geq 0}$.}
\end{figure}
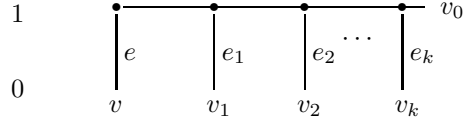



  
  
 In this subsection  we calculate contributions of such graphs with $d_v>0$. Let $A_{\rm single}$ be the graph type that no edges are attached to $v$, then ${\rm Contri}(A_{\rm single})$ is
\begin{eqnarray}\label{a0single} 
 &&\sum_{d_v=1}^\infty \sQ^{d_v}\int_{[\barM_1(Q,d_v)]\virt} 
\frac{ \ft c_{d_v-1}(R\pi_{v\ast}\cL_v  \cL_\ft)}{ e_T(R\pi_{v\ast}\cL_v  \cL_\ft)}\nonumber= \sum_{d_v=1}^\infty \sQ^{d_v}\int_{[\barM_1(Q,d_v)]\virt} \frac{\ft (d_v\ft^{d_v-1})}{\ft^{d_v}}\\
&= &\sum_{d_v=1}^\infty N_{1,d_v}d_v\sQ^{d_v}
= \frac{d}{dT}\ti F_1(\sQ). 
\end{eqnarray}

Let $A_+$ be  the graph type with $d_v>0$ and $\sum_j (d_{e_j}+d_{v_j})>0$. For each  such $\ga$ we replace $v$ by a $(1,\rho)$-marking to obtain $\ga'\in \hat\Xi^0_m$ (see the sentences below \eqref{sK}) for $m=d_\ga-d_v$. Using the Poincar\'e dual of the quintic diagonal $PD:=5^{-1}\sum_{j=0}^3   h^j \otimes h^{3-j}  
$ and $\deg(\psi_{(e,v)}\cap \barM_{1,1}(Q,d_v))=0$,
   the contribution we want   is   \black
\begin{eqnarray*}\label{A+f}
 &&  \sum_{d_v,m=1}^\infty  (-\sQ)^{d_v+m}   \sum_{(\ga')\in \hat\Xi^0_m}\int_{ [\barM_{1,1}(\Pf,d_v)^p]^v\times [\cW_{(\ga')}]^v}   \frac{  \ft\cdot e_T(R\pi_{v\ast} \cL_v\otimes \cL_\ft) }{e_T(R\pi_{v\ast} \cL_v\otimes \cL_\ft)}      \frac{(-1) PD(h_e+\ft)}{w_{(e,v)}-\psi_{(e,v)}}    \\
 &&\cdot  \frac{c_{{\rm top}-1}(\pi_{\ga' \ast}\cL_{\ga'}\cL_\ft  (-D_{\ga'}))}{e(N_{\cW_{\ga'}/\cW}\virt)}  \nonumber\\
  &=&\frac{1}{5}    \sum_{j=0}^3     \sum_{d_v=1}^\infty  (-\sQ)^{d_v} \int_{ [\barM_{1,1}(\Pf,d_v)^p]^v}   \ft     \ev\sta h^j     \\
  &&\cdot     \sum_{m=1}^\infty (-\sQ)^m   \sum_{(\ga')\in \hat\Xi^0_m} \int_{[\cW_{(\ga')}]\virt}   \frac{-  \ev\sta   h^{3-j}(h+\ft)  }{w_{(e,v)} }        \black  \frac{c_{{\rm top}-1}(\pi_{{\ga'} \ast}\cL_{\ga'}\cL_\ft  (-D_{\ga'}))}{e(N_{\cW_{\ga'}/\cW}\virt)}\nonumber\\
  &=&  \frac{1}{5}    \bigg(   \sum_{d_v=1}^\infty d_v N_{1,d_v} \sQ^{d_v} \bigg) 
  \sum_{m=1}^\infty (-\sQ)^m   \sum_{(\ga')\in \hat\Xi^0_m} \int_{[\cW_{(\ga')}]\virt}   \frac{\ft}{\psi'_\ga }   \black     \ev\sta  \bl h^{2}(h+\ft) \br    \black  \frac{c_{{\rm top}-1}(\pi_{\ga' \ast}\cL_{\ga'}\cL_\ft  (-D_{\ga'}))}{e(N_{\cW_{\ga'}/\cW}\virt)}  \\
&=& \frac{1}{5}    \bigg(   \sum_{d_v=1}^\infty d_v N_{1,d_v} \sQ^{d_v} \bigg) \cdot  \ft \sK_{1,2} = \frac{\ft}{5}  \cdot     \sK_{1,2}  \cdot \frac{d}{dT}  \sF_1.  \end{eqnarray*}
where $\sF_1:=\sum_{d=1}^\infty N_{1,d}\sQ^d$ with $\sQ=e^T$. \black

 \subsection{Contribution from type $A$ graphs $\ga$ with $d_v=0,v\in V_0(\ga),g_v=1$}

Let $A_+^0$ be the graph type  of  $d_v=0,v\in V_0(\ga),g_v=1$. Such graphs are of  the following shape.
 \begin{center}
\begin{picture}(20,28)
 
 \put(-1,24){\circle{5}}
\put(-2.5,23){$\Gamma_1$}
  \put(-1,22){\line(0,-1){16}}
\put(-5,15){$e_1$}
\put(-1,5){\circle*{2}}
\put(0,7){$\psi_{\ga_1}$}
\put(0,2){$\psi_{1}$}

 \put(12,24){\circle{5}}
\put(10.5,23){$\Gamma_2$}
  \put(12,22){\line(0,-1){16}}
\put(8,15){$e_2$}
\put(12,5){\circle*{2}}
\put(13,7){$\psi_{\ga_2}$}
\put(13,2){$\psi_{2}$}

\put(20, 20){$\cdots$}

 \put(32,24){\circle{5}}
\put(30.5,23){$\Gamma_k$}
  \put(32,22){\line(0,-1){16}}
\put(28,15){$e_k$}
\put(32,5){\circle*{2}}
\put(33,7){$\psi_{\ga_k}$}
\put(33,2){$\psi_{k}$}

 \put(-5,5){\line(1,0){40}}

\put(-8, 5){$v$}
\put(40,5){$g_v=1$}

\end{picture}
\end{center}

 In the above graph,   $\ga $ is decomposed uniquely to (i) a $\ga_0=\{v\}\in \Xi_0$ with $d_v=0$, and  (ii) a sequence $\ga_1,\cdots,\ga_k\in \hat\Xi^0$  where each edge containing the  marking is denoted by $e_i$  (for $i=1,\cdots,k$).

 Denote $\psi_{(e_i,v)}=\psi_i$ and $w_{(e_i,v)}:=-\psi_{\ga_i}$, where $\psi_{\ga_i}$ is the $\psi$-class of $\sC_{e_i}$. Then the contribution we want  is
\begin{eqnarray*}&& \ft \sum_{m,k=1}^\infty  \frac{(-\sQ)^{m} } {k!} \sum_{\substack{ (\ga_i) \in \hat\Xi^0_{m_i}    \\  \sum m_i=m }}  \sum_{\substack{a +1=\sum a_i\\ a,a_i\geq 0}}
\int_{[\cW_{\ga_0}]^v\times \prod[\cW_{(\ga_i)}]^v}   \frac{ c_{a}(R\pi_{\ga_0 \ast}\cL_{\ga_0}\cL_\ft)   }{e_T(R\pi_{\ga_0\ast} \cL_{\ga_0}\otimes \cL_\ft)} \\
&&\cdot\prod_{i=1}^k    (-PD_i)   \frac{h_{e_i}+\ft}{w_{(e_i,v)}-\psi_i}    \black   \frac{c_{m_i-a_i}(\pi_{\ga_i \ast}\cL_{\ga_i}\cL_\ft  (-D_{\ga_i}))}{e(N_{\cW_{\ga_i}/\cW}\virt)}  \\
 &=& \ft \sum_{m,k=1}^\infty \frac{(-\sQ)^{m} } {k!} \sum_{\substack{ (\ga_i) \in \hat\Xi^0_{m_i}    \\  \sum m_i=m  }} \sum_{a=0}^\infty {\rm Coe}_{(\ft')^{a+1}}
\int_{[\cW_{\ga_0}]^v\times \prod[\cW_{(\ga_i)}]^v}   \frac{ c_{a}(R\pi_{\ga_0 \ast}\cL_{\ga_0}\cL_\ft)   }{e_T(R\pi_{\ga_0\ast} \cL_{\ga_0}\otimes \cL_\ft)}\\
&&\cdot \prod_{i=1}^k    PD_i   \frac{h_{e_i}+\ft}{\psi_{\ga_i}+\psi_i}    \black   \frac{e(\pi_{\ga_i \ast}\cL_{\ga_i}\cL_\ft\cL_{\ft'}  (-D_{\ga_i}))}{e(N_{\cW_{\ga_i}/\cW}\virt)}, 
 \end{eqnarray*}
where  $PD_i=\frac{1}{5}\sum_{j=0}^3 h^j\otimes h_{e_i}^{3-j}$. \black
Using $\lam^2=h^2=\lam h=0$ over 
$[\cW_{\ga_0}]\virt=(\barM_{1,k}\times Q ) \cap  [ -40h^3-10\lam h^2 ]$, 
$$  \frac{c(R\pi_{\ga_0 \ast}\cL_{\ga_0}\cL_\ft)}{e(R\pi_{\ga_0 \ast}\cL_{\ga_0}\cL_\ft)} = \frac{1+h+\ft}{h+\ft}   \frac{\ft+h -\lam}{1+\ft+h -\lam} 
= 1- \frac{\lam}{\ft(1+\ft)}  = 1-\frac{\lam}{\ft} +\lam -\lam \ft +\lam \ft^2-\lam \ft^3 +\cdots  + (-1)^\ell \lam  \ft^\ell+\cdots. $$
Hence we get 
 $$ \frac{c_a(R\pi_{\ga_0 \ast}\cL_{\ga_0}\cL_\ft)}{e(R\pi_{\ga_0 \ast}\cL_{\ga_0}\cL_\ft)}= 
 \begin{cases}  (-1)^{a-1} \lam  \ft^{a-1}  &\quad \text{for}\ a\in\NN,\\
   1-\lam{\ft}^{-1} &\quad \text{for}\ a=0.
   \end{cases}$$

 We  express   the contribution    as the sum  over $a=0,1,2,3,\cdots$ as follows.

Case $a\in\NN$:  As $ [ -40h^3-10\lam h^2 ]((-1)^{a-1} \lam  \ft^{a-1} )=(-1)^{a} 40h^3 \lam     \ft^{a-1} $, the contribution is
 \begin{eqnarray*}
&&\ft \sum_{k,m_i=1}^\infty  \frac{(-\sQ)^{m_1+\cdots m_k} } {k!} \sum_{\substack{ (\ga_i) \in \hat\Xi^0_{m_i}      }}  {\rm Coe}_{(\ft')^{a+1}}
\int_{\barM_{1,k} \times Q\times \prod[\cW_{(\ga_i)}]^v}   (-1)^{a} 40h^3 \lam     \ft^{a-1}\\
&&\cdot \prod_{i=1}^k    \frac{h^3_{e_i}}{5}   \frac{h_{e_i}+\ft}{\psi_{\ga_i}+\psi_i}    \black   \frac{e(\pi_{\ga_i \ast}\cL_{\ga_i}\cL_\ft\cL_{\ft'}  (-D_{\ga_i}))}{e(N_{\cW_{\ga_i}/\cW}\virt)} \\
&=& \ft \sum_{k=1}^\infty  \frac{1} {k!}   {\rm Coe}_{(\ft')^{a+1}}
\int_{\barM_{1,k} \times Q}   \frac{(-1)^{a}}{5^k} 40h^3 \lam     \ft^{a-1}   \hat\sK_1(\psi_1)\hat\sK_1(\psi_2)\cdots\hat\sK_1(\psi_k)\\
&=& (-1)^{a}\ft^{a}   \sum_{k=1}^\infty  \frac{200} {5^k k!}     {\rm Coe}_{(\ft')^{a+1}}
\int_{\barM_{1,k} }      \lam       \hat\sK_1(\psi_1)\hat\sK_1(\psi_2)\cdots\hat\sK_1(\psi_k).
     \end{eqnarray*}

 Case $a=0$: As $ [ -40h^3-10\lam h^2 ](1-\frac{\lam}{\ft})=-40h^3+40h^3\frac{\lam}{\ft}-10\lam h^2$, the contribution is the sum of:
  \begin{eqnarray*}
 &(i)& \ft \sum_{k,m_i=1}^\infty  \frac{(-\sQ)^{m_1+\cdots m_k} } {k!} \sum_{\substack{ (\ga_i) \in \hat\Xi^0_{m_i}    }}   {\rm Coe}_{(\ft')^{1}}
\int_{ \barM_{1,k} \times Q\times \prod[\cW_{(\ga_i)}]^v}   40 (h^3\frac{\lam}{\ft}-h^3 ) \\
&&\cdot   \prod_{i=1}^k    PD_i   \frac{h_{e_i}+\ft}{\psi_{\ga_i}+\psi_i}    \black   \frac{e(\pi_{\ga_i \ast}\cL_{\ga_i}\cL_\ft\cL_{\ft'}  (-D_{\ga_i}))}{e(N_{\cW_{\ga_i}/\cW}\virt)} \\
&=&\ft \sum_{k=1 }^\infty  \frac{1} {5^k k!}   {\rm Coe}_{(\ft')^{1}}
\int_{ \barM_{1,k} \times Q }    (-40h^3+40h^3\frac{\lam}{\ft} )    \hat\sK_1(\psi_1)\cdots\ti\sK_1(\psi_k)\\
&=& \ft \sum_{k=1 }^\infty  \frac{1} {5^k k!}  {\rm Coe}_{(\ft')^{1}}
\int_{ \barM_{1,k} }    200(\frac{\lam}{\ft}-1 )    \hat\sK_1(\psi_1)\cdots\ti\sK_1(\psi_k), 
   \end{eqnarray*}  \black
    and $(ii)$, recalling $PD_i=\frac{1}{5}\sum_{j=0}^3 h^j\otimes h_{e_i}^{3-j}$, 
    \begin{eqnarray*}
&&\ft   \sum_{k,m_i=1}^\infty  \frac{(-\sQ)^{m_1+\cdots m_k} } {k!} \sum_{\substack{ (\ga_i) \in \hat\Xi^0_{m_i}    }}   {\rm Coe}_{(\ft')^{1}}
\int_{ \barM_{1,k} \times Q\times \prod[\cW_{(\ga_i)}]^v}    ( -10\lam h^2 )  \\
&&\cdot  \prod_{i=1}^k    PD_i \cdot  \frac{h_{e_i}+\ft}{\psi_{\ga_i}+\psi_i}    \black   \frac{e(\pi_{\ga_i \ast}\cL_{\ga_i}\cL_\ft\cL_{\ft'}  (-D_{\ga_i}))}{e(N_{\cW_{\ga_i}/\cW}\virt)} \\
&=& \ft \sum_{k,m_i=1}^\infty  \frac{(-\sQ)^{m_1+\cdots m_k} } {5^kk!} \sum_{\substack{ (\ga_i) \in \hat\Xi^0_{m_i}    }}  {\rm Coe}_{(\ft')^{1}}
\int_{ \barM_{1,k} \times Q\times \prod[\cW_{(\ga_i)}]^v}    ( -10\lam h^2 ),  \\
&&  \bigg(  \sum_{j=1}^k
   (h^0\otimes h_{e_1}^3)\cdots (h^0\otimes h_{e_{j-1}}^3) (h^1\otimes h_{e_j}^2)(h^0\otimes h_{e_{j+1}}^3)\cdots (h^0\otimes h_{e_k}^3) \bigg) \\
   &&\cdot  \bigg( \prod_{i=1}^k  \frac{h_{e_i}+\ft}{\psi_{\ga_i}+\psi_i}    \black   \frac{e(\pi_{\ga_i \ast}\cL_{\ga_i}\cL_\ft\cL_{\ft'}  (-D_{\ga_i}))}{e(N_{\cW_{\ga_i}/\cW}\virt)}  \bigg) \\
&=& \ft \sum_{k=1}^\infty  \frac{1 } {5^kk!}  {\rm Coe}_{(\ft')^{1}}
\int_{ \barM_{1,k}  }    (-50) \lam     \sum_{j=1}^k  \hat\sK_1(\psi_1) \hat\sK_1(\psi_2)\cdots \hat\sK_{1}(\psi_{j-1})\hat\sK_{2}(\psi_j)\hat \sK_{1}(\psi_{j+1})\cdots \hat \sK_1(\psi_k).     \end{eqnarray*}

 Therefore the   contribution of graph type $A_+^0$ 
is
\begin{eqnarray*}
&& \ft  \sum_{k=1}^\infty  \frac{(-50) } {5^kk!}  {\rm Coe}_{(\ft')^{1}}
\int_{ \barM_{1,k}  }     \lam     \sum_{j=1}^k  \hat\sK_1(\psi_1) \hat\sK_1(\psi_2)\cdots \hat \sK_{1}(\psi_{j-1})\hat \sK_{2}(\psi_j)\hat \sK_{1}(\psi_{j+1})\cdots \hat \sK_1(\psi_k)  \nonumber\\
&&- \ft \sum_{k=1 }^\infty  \frac{200} {5^k k!}   {\rm Coe}_{(\ft')^{1}}
\int_{ \barM_{1,k} }        \hat\sK_1(\psi_1)\cdots\hat \sK_1(\psi_k)\\
&&+\sum_{a=0}^\infty  (-1)^{a}\ft^{a}   \sum_{k=1}^\infty  \frac{200} {5^k k!}     {\rm Coe}_{(\ft')^{a+1}}
\int_{\barM_{1,k} }      \lam       \hat\sK_1(\psi_1)\hat\sK_1(\psi_2)\cdots\hat\sK_1(\psi_k) \nonumber\\
&=&    \frac{(-50) } {5} \ft  {\rm Coe}_{(\ft')^{1}}
\int_{ \barM_{1,1}  }     \lam     \hat \sK_{2}(\psi_1)   -   \frac{200} {5 } \ft   {\rm Coe}_{(\ft')^{1}}
\int_{ \barM_{1,1} }        \hat\sK_1(\psi_1)   \nonumber \\
&&
-\frac{1}{\ft}  \sum_{k=1}^\infty  \frac{200} {5^k k!}  \sum_{a=0}^\infty  (-\ft)^{a+1}     {\rm Coe}_{(\ft')^{a+1}}
\int_{\barM_{1,k} }      \lam       \hat\sK_1(\psi_1)\hat\sK_1(\psi_2)\cdots\hat\sK_1(\psi_k) \nonumber \\
&=& -\frac{10}{24} \ft  \sK_{1,2}   +   \frac{40} {24 } \ft \sK_{2,1}  -\frac{1}{\ft}         \sum_{k=1}^\infty  \frac{200} {5^k k!}    
\int_{\barM_{1,k} }      \lam       \hat\sK_1|_{\ft'=-\ft}(\psi_1)\hat\sK_1|_{\ft'=-\ft}(\psi_2)\cdots\hat\sK_1|_{\ft'=-\ft}(\psi_k), 
\end{eqnarray*}
where the first equality comes from the fact that  both $\ti \sK_{c,k}$ and $ \hat\sK_k$ for any $c,k$ are formal power series of $\ft'$ with coefficients of $(\ft')^0$ vanishing.

  Using  \eqref{tic} and Lemma \ref{res}, the last term in  the above formula equals
 \begin{eqnarray}\label{sKK}
 -\frac{200}{\ft}\cdot\Big(\int_{\barM_{1,1}}\lambda \Big)\cdot\sum_{k=1}^\infty\sum_{i_1+ \ldots+ i_k=k-1\atop i_1 , \ldots, i_k\ge 0}\frac{1}{k}
\cdot  \prod_{j=1}^k\bigg(\frac{(-1)^{i_j}}{i_j!}{\rm Res}_{\hbar=0}\{\hbar^{-i_j}\frac{\tilde\sZ_{0, 1}(\ft, -\ft)}{5}\}\bigg), \end{eqnarray}
where we used $\int_{\barM_{1,k}}\lambda\psi_1^{i_1}\cdots\psi_k^{i_k}=\frac{(k-1)!}{i_1!\cdots i_k!}\int_{\barM_{1,1}}\lambda$   whenever $i_1+\ldots+i_k=k-1$.
 By \eqref{formula1} \black  in the Appendix, we have \eqref{sKK} equals 
$-\frac{200}{24}\ft^{-1} \eta=-\frac{25}{3}\displaystyle\frac{g_1}{I_0}$. Thus  the   contribution of graph type $A_+^0$  equals 
\begin{eqnarray*}
\frac{40\ft }{24}\cdot\sK_{2, 1}-\frac{10\ft}{24}\cdot\sK_{1, 2}
-\frac{25}{3}\cdot\frac{g_1}{I_0}
=\frac{25}{3 I_0}\frac{{\rm d}I_0}{{\rm d}t}-\frac{25}{12}\cdot\frac{d}{dt}(T-t)
-\frac{25}{3}\cdot\frac{g_1}{I_0}.
\end{eqnarray*}

Thus type A contribution is, by \eqref{GOD},
  \begin{eqnarray*}&& (1+\frac{\ft}{5}      \sK_{1,2} )\frac{d\sF_1}{dT}
 +\frac{25}{3 I_0}\frac{{\rm d}I_0}{{\rm d}t}-\frac{25}{12}\cdot\frac{d(T-t)}{dt}
-\frac{25}{3}\cdot\frac{g_1}{I_0}\nonumber\\
   &=&  \frac{d}{dt}\sF_1  -\frac{25}{12}\cdot  (\frac{dT}{dt}-1)
 \ +  \frac{40} {24 } \cdot 5  \frac{1}{I_0}\frac{d I_0}{dt}    -\frac{25}{3}\cdot\frac{g_1}{I_0}. \end{eqnarray*}
\section{Graph Type B}

The contribution from Graph B is the most involved. It splits into two parts. One part is in this section, another is in the Appendix \ref{sectypeBseriessimplification}.

  \subsection{Notations for $g=0$ two-point functions.}


 We define in  MSP weight  a series
 \begin{align*}
 \ti\sZ_{66}\sta :=  \frac{1}{2\hbar_1\hbar_2}\sum_{d=1}^\infty \sQ^d\int_{\barM_{0,2}(\P5,d)} \frac{e( \ti E_d)}{(\hbar_1-\psi_1)(\hbar_2-\psi_2)}ev_1\sta H^5 ev_2\sta H^5,
 \end{align*}
where $\ti E_d:=\ti\pi\lsta \ti f\sta (L_H^{\otimes 5} \oplus (L_H L_{\ft} L_{\ft'})) $ is a rank $6d+2$ bundle over $\barM_{0,2}(\P5,d)$, with $\ti\pi:\ti\cC_d\to \barM_{0,2}(\P5,d)$ and 
$\ti f:\cC_d\to \barM_{0,2}(\P5,d)$ the universal family. Then
 \begin{align*}
  \ti\sZ_{66}\sta =    \frac{-1}{2\hbar_1\hbar_2}\sum_{d=1}^\infty (-\sQ)^d\int_{[\cW_{0,(1,\rho)^2,(d,0)}]\virt} \frac{e\bl  \ti\pi\lsta \ti f\sta  (L_HL_{\ft}L_{\ft'})  \br }{(\hbar_1-\psi_1)(\hbar_2-\psi_2)}ev_1\sta H^5 ev_2\sta H^5
\end{align*}
$$\qquad =    \frac{-1}{2\hbar_1\hbar_2}\sum_{d=1}^\infty (-\sQ)^d  \sum_{(\ti\ga)\in \ti\Delta^{11}_d}\int_{[\cW_{(\ti\ga)}]\virt} \frac{e\bl  \ti\pi\lsta \ti f\sta  (L_{\ti\ga} L_{\ft} L_{\ft'})  \br }{e(N_{\cW_{(\ti\ga)}/\cW}\virt)}\frac{\ft^{10}}{(\hbar_1-\psi_1)(\hbar_2-\psi_2)},  $$
where  $\ti\Delta^{ij}_d$  denotes the set of all (regular) graphs appearing in localizing $ \CC\sta$-action on $\cW_{0,(1,\rho)^2,(d,0)}=\barM_{0,2}(\P5,d)$ which have the  first marking mapped to $V_i$ and the second marking mapped to $V_j$, for $i,j\in\{0,1\}$.\\

 Let $\ti\Delta^{11}_{d,Sep}$  denote the set of all graphs in $\ti\Delta^{11}_d$ with two markings not   on the same vertex.  

 \begin{align*}&   \ti\sZ_{66,Sep}\sta :=    \frac{-1}{2\hbar_1\hbar_2}\sum_{d=1}^\infty (-\sQ)^d  \sum_{(\ti\ga)\in \ti\Delta^{11}_{d,Sep}}\int_{[\cW_{(\ti\ga)}]\virt} \frac{e\bl  \ti\pi\lsta \ti f\sta  (L_{\ti\ga} L_{\ft} L_{\ft'})  \br }{e(N_{\cW_{(\ti\ga)}/\cW}\virt)}\frac{\ft^{10}}{(\hbar_1-\psi_1)(\hbar_2-\psi_2)}.  \\
 \end{align*}

 Let $\fD_1,\fD_2\sub \cC_{\ti\ga}\to \cW_{(\ga)}$ be the universal family and divisors given by the two markings. We also have
\begin{align*} \ti\sZ_{66,Sep}\sta = \frac{-1}{2\hbar_1\hbar_2}\sum_{d=1}^\infty (-\sQ)^d  \sum_{(\ti\ga)\in \ti\Delta^{11}_{d,Sep}}\int_{[\cW_{(\ti\ga)}]\virt}   (\ft')^2 \black \cdot  \frac{ e\bl  \ti\pi\lsta \ti f\sta  (L_{\ti\ga}L_{\ft} L_{\ft'}(-\fD_1-\fD_2)) \br }{e(N_{\cW_{(\ti\ga)}/\cW}\virt)}\frac{  \ft^{10} }{(\hbar_1-\psi_1)(\hbar_2-\psi_2)} . \end{align*}
 \begin{eqnarray}\label{star}
 &&Coe_{(\ft')^2} \ti\sZ_{66,Sep}\sta \nonumber\\
 &=& \frac{-1}{2\hbar_1\hbar_2}\sum_{d=1}^\infty (-\sQ)^d  \sum_{(\ti\ga)\in \ti\Delta^{11}_{d,Sep}}\int_{[\cW_{(\ti\ga)}]\virt}   \frac{ e\bl  \ti\pi\lsta \ti f\sta  (L_{\ti\ga}L_{\ft} (-\fD_1-\fD_2)) \br }{e(N_{\cW_{(\ti\ga)}/\cW}\virt)}\frac{\ft^{10} }{(\hbar_1-\psi_1)(\hbar_2-\psi_2)}.
 \end{eqnarray}

\black

 \subsection{Two types of contributions}


  Any  graph $\ga$ without positive genus vertex (called loop graph) that gives nonzero contribution
  to \eqref{zero0}  necessarily has $|V_1(\ga)|=1$, because of the integrand in \eqref{zero0}. We call it \textit{the geometry of cap}.  We say such a graph is of type $B$, as associated  to the left graph below.  Let $\ti \ga$ be the subgraph of $\ga$ shown below on the right. 

\begin{center}
\begin{picture}(30,30)

\put(-36,23){\circle*{2}}

\put(-40,25){$\psi'_1$}
 \put(-36,22){\line(0,-1){18}}
\put(-36,19){$\psi_1$}
\put(-34,12){$ \ee_1$}

\put(-23,23){\circle*{2}}
\put(-23,19){$\psi_2$}
\put(-21,12){$\ee_2$}
\put(-27,25){$\psi'_2$}
 \put(-23,22){\line(0,-1){18}}

\put(-38,4){\line(1,0){17}}
\put(-30.5, 4.5){$v$}

\put(-35, 0){Graph $\Gamma$}

\put(-11,23){\circle*{2}}
\put(-6,18){$\cdots$}
\put(-15,25){$\hat\psi_1$}
 \put(-11,22){\line(0,-1){15}}
\put(-11,20){$\psi_{\ga_1}$}
\put(-9,12){$e_1$}
\put(-11,5){\circle{5}}

\put(2,23){\circle*{2}}
\put(2,20){$\psi_{\ga_k}$}
\put(4,12){$e_k$}
\put(-2,25){$\hat\psi_k$}
 \put(2,22){\line(0,-1){15}}
\put(2,5){\circle{5}}

 \put(-35,23){\line(1,0){40}}

\put(-18, 22.5){$\ti v$}
\put(-50,22){$g_{\ti v}=0$}

\put(34,23){\circle*{2}}
 \put(34,22){\line(0,-1){18}}
\put(34,20){$\psi_1$}
\put(36,12){$\ee_1$}

\put(47,23){\circle*{2}}
\put(47,20){$\psi_2$}
\put(49,12){$\ee_2$}
  \put(47,22){\line(0,-1){18}}

\put(32,4){\line(1,0){17}}
\put(40.5, 4.5){$v$}

\put(35,0 ){Graph $\ti \Gamma$}

\end{picture}
\end{center}
 
 Here all vertices are of genus zero. Let $\ga_0$ be the graph of single vertex $\ti v\in V_1$, $g_{\ti v}=0$, no edges, and with
$k+2$ many $(1,\rho)$ markings. 
The tail of $\ti v$ that contains $e_i$ is denoted by $\ga_i$. 
 Let the total degree of $\cL$ over objects in $\ga_i$ be $d_i$.   
 Then $d=d_{\ti\ga}+d_1+d_2+\cdots + d_k$ is the degree $\sL$. We divide into two cases.
\begin{enumerate}
\item the first  case is  when  $k=0$, $\ti v$ unstable.   We leave its $
Cont_I$ to next few subsections.

 \item  the other  case is when $k>0$ and $\ti v$ stable whose contribution is denoted by 
 $Cont_{II}$.
\end{enumerate} \black

Let's consider the second case. 
 By \cite{CLLL2}, $$[\cW_{(\ga)}]\virt= |Aut\ti \ga| \cdot  \frac{  \prod|Aut\ga_i|}{|Aut\ga|}  \black [\cW_{(\ga_0)}]\virt\times  [\cW_{(\ti\ga)}]\virt\times \prod[\cW_{(\ga_i)}]\virt. $$  
  Thus
the contribution of $\ga$ 
 is
 \begin{eqnarray*}
  &&Contri_{(\ga)}:=\int_{[\cW_{(\ga)}]^v}\frac{\ft c_{d-1}R\pi_{\cW_{\ga}\ast}(\cL_{\cW_{\ga}} \cN_{\cW_{\ga}} \cL_\ft)}{e(N\virt_{\cW_{(\ga)}})} \\
  &=& \frac{\ft|Aut\ti \ga| \prod|Aut\ga_i|}{|Aut\ga|}  \sum_{\ell+\ell_0+\sum \ell_i=d-1}  \int_{[\cW_{(\ga_0)}]^v} \frac{c_\ell \big( R\pi_{\cW_{\ga_0}\ast}(\cL_{\cW_{\ga_0}}   \cL_\ft) \big) }{e(N\virt_{\cW_{(\ga_0)}})}    \\
&&\cdot \int_{[\cW_{(\ti\ga)}]^v}   \frac{\ft^8c_{\ell_0}\big( R\pi_{\cW_{\ti\ga}\ast}(\cL_{\cW_{\ti\ga}}   \cL_\ft (-\fD_1-\fD_2)) \big) }{5^2(\psi_1+\psi'_1)(\psi_2+\psi'_2)e(N\virt_{\cW_{(\ti\ga)}})}  
    \prod \int_{ [\cW_{(\ga_i)}]^v}\frac{-\ft^4c_{\ell_i}\big( R\pi_{\cW_{\ga_i}\ast}(\cL_{\cW_{\ga_i}}   \cL_\ft (-D_i) )\big)}{5(-\psi_{\ga_i}-\hat\psi_i)e(N\virt_{\cW_{(\ga_i)} })}    .
\end{eqnarray*}

 From the shape of the graph, one has $ R\pi_{\cW_{\ga_0}\ast}(\cL_{\cW_{\ga_0}}   \cL_\ft)\cong \sO_{\cW_{\ga_0}}.$
 Both $R\pi_{\cW_{\ti\ga}\ast}(\cL_{\cW_{\ti\ga}}   \cL_\ft (-\fD_1-\fD_2))$
 and $\oplus_i R\pi_{\cW_{\ga_i}\ast}(\cL_{\cW_{\ga_i}}   \cL_\ft (-D_i))$ are bundles, and the sum of  their ranks is $d-1$. We see the only possible contribution to $Contri_{(\ga)}$
 is from $\ell=0$ and all $c_{\ell_i}$ are top Chern classes.
 Therefore, with $ \frac{1 }{e(N\virt_{\cW_{(\ga_0)}})}=\frac{5\ft}{-\ft^5}$, $Contri_{(\ga)}$ equals
  \begin{eqnarray*}
&&\frac{- |Aut\ti \ga| \prod|Aut\ga_i|}{ 5\ft^5 |Aut\ga|}  \sum_{a,b,b_1,\cdots,b_k=0}^\infty \int_{\barM_{0,2+k}} (-\psi'_1)^{a}(-\psi'_2)^{b}\prod_i (-\hat\psi_i)^{b_i}    \bigg(\int_{[\cW_{(\ti\ga)}]\virt}   \frac{ \ft^{10} }{(\psi_1)^{a+1} (\psi_2)^{b+1}}   \\
&&        \cdot \frac{e\big( \pi_{\cW_{\ti\ga}\ast}(\cL_{\cW_{\ti\ga}}   \cL_\ft (-\fD_1-\fD_2)) \big) }{e(N\virt_{\cW_{(\ti\ga)}})}\bigg)
     \prod \int_{ [\cW_{(\ga_i)}]\virt}\frac{\ft^4}{5 }       \frac{1}{(\psi_{\ga_i})^{b_i+1}}        \frac{e \big( \pi_{\cW_{\ga_i}\ast}(\cL_{\cW_{\ga_i}}   \cL_\ft (-D_i) )\big)}{e(N\virt_{\cW_{(\ga_i)} })}    \\
 &=&\frac{- |Aut\ti \ga| \prod|Aut\ga_i|}{ 5\ft^5 |Aut\ga|} (k-1)! \sum_{a+b+b_1+\cdots b_k=k-1}^\infty   \frac{(-1)^{a+b}}{a!b!}   \bigg( \int_{[\cW_{(\ti\ga)}]\virt} \frac{ \ft^{10} }{(\psi_1)^{a+1} (\psi_2)^{b+1}}        \\
&&  \cdot \frac{e\big( \pi_{\cW_{\ti\ga}\ast}(\cL_{\cW_{\ti\ga}}   \cL_\ft (-\fD_1-\fD_2)) \big) }{e(N\virt_{\cW_{(\ti\ga)}})} \bigg)   \prod_{i=1}^k    \frac{(-1)^{b_i}}{b_i!}  \frac{\ft^4}{5 }    \int_{ [\cW_{(\ga_i)}]\virt}       \frac{1}{(\psi_{\ga_i})^{b_i+1}}        \frac{e \big( \pi_{\cW_{\ga_i}\ast}(\cL_{\cW_{\ga_i}}   \cL_\ft (-D_i) )\big)}{e(N\virt_{\cW_{(\ga_i)} })}.
\end{eqnarray*}

 Let  $\ti\Delta^{11,\circ}_d$  denote the graphs    in $\ti\Delta^{11}_d$ which has only one vertex at $V_0$ and whose both markings  are unstable. \black Using \eqref{star} and   the geometry of cap,   the sum of all above contributions is
    \begin{eqnarray*}
&&Cont_{II}
= \sum_{k=1}^\infty \frac{-  (k-1)!}{ 5\ft^5 \cdot 2  \cdot  k!}  \sum_{\substack{a+b+b_1+\cdots b_k=k-1\\  a,b,b_i \geq 0}}^\infty   \frac{(-1)^{a+b}}{a!b!}
 \sum_{\substack{(\ti\ga)\in \ti\Delta^{11,\circ}_d} }  (-\sQ)^{d_{\ti\ga}} \bigg(\int_{[\cW_{(\ti\ga)}]\virt}         \frac{ \ft^{10} }{(\psi_1)^{a+1} (\psi_2)^{b+1}}
    \\
&&   \cdot \frac{e\big( \pi_{\cW_{\ti\ga}\ast}(\cL_{\cW_{\ti\ga}}   \cL_\ft (-\fD_1-\fD_2)) \big) }{e(N\virt_{\cW_{\ti\ga}})} \bigg)\prod_{i=1}^k       \frac{(-1)^{b_i}}{b_i!}    \frac{\ft^4}{5 }   \sum_{(\ga_i)\in\hat\Xi^1} (-\sQ)^{d_{\ga_i}}       \int_{[\cW_{(\ga_i)}]\virt}   \frac{e(\pi_{\ga_i \ast}\cL_{\ga_i}\cL_\ft  (-D_i))}{(\psi_{\ga_i})^{b_i+1}e(N_{\cW_{(\ga_i)}/\cW}^{vir})}    \\
 &=& \sum_{k=1}^\infty \frac{1 }{ 5\ft^5   k }   \sum_{\substack{a+b+b_1+\cdots b_k=k-1\\  a,b,b_i \geq 0}}^\infty
 \frac{(-1)^{a+b}}{a!b!}
 \fR_{\hbar_2=0}\fR_{\hbar_1=0}  \big\{ \hbar_1^{-a}\hbar_2^{-b}Coe_{(\ft')^2}   \ti\sZ_{66,Sep}\sta  \big \}  \prod_{i=1}^k       \frac{(-1)^{b_i}}{b_i!}    \frac{\ft^4}{5 }   \fR_{\hbar=0}  \frac{  \sZ_{1,5}}{\hbar^{b_i}} ,
  \end{eqnarray*}
 where we used $ \sP_c =   {\rm Res}_{\hbar=0}  \frac{\sZ_{1,5}}{\hbar^{c-1}}$  in Lemma \ref{res}.  By \eqref{Zjk}, $\sZ_{1,5}=\ti\sZ_{1,5}|_{\ft'=0}$ where $\ti\sZ_{1,5}$ is regular in $\ft'$.  
   Applying   \eqref{formula1},  the above becomes  ($\sZ\sta_6:=  \frac{\ft^4}{5}   \ti\sZ_{1,5} $) \black
   \begin{eqnarray*}
&&Cont_{II}
   =   \frac{1 }{ 5\ft^5     }   \sum_{  a,b \geq 0}^\infty
   \frac{(-1)^{a+b}}{a!b!}
 \fR_{\hbar_2=0}\fR_{\hbar_1=0}  \big\{  \hbar_1^{-a}\hbar_2^{-b}Coe_{(\ft')^2}  \ti\sZ_{66,Sep}\sta    \big \}     \\
 &&\cdot \sum_{m=1}^\infty \frac{1 }{  m }     \bigg(     \sum_{\substack{a+b+\sum b_\ell=m-1\\   b_i \geq 0}}^\infty  \prod_{\ell=1}^m       \frac{(-1)^{b_\ell}}{b_\ell!}      \fR_{\hbar=0}  \frac{  \sZ\sta_6}{\hbar^{b_\ell}}  \bigg)\bigg|_{\ft'=0}\\
  &=&   \frac{1 }{ 5\ft^5     }   \sum_{  a,b \geq 0}^\infty
   \frac{(-1)^{a+b}}{a!b!}
 \fR_{\hbar_2=0}\fR_{\hbar_1=0}  \big\{  \hbar_1^{-a}\hbar_2^{-b}Coe_{(\ft')^2}  \ti\sZ_{66,Sep}\sta     \big \}
\frac{\eta_0^{a+b+1}}{a+b+1} ,\\
   \end{eqnarray*}
where we used $\sum_{a,b\geq 0}\frac{y^{a+b+1}}{(a+b+1)a!b!\hbar_1^a\hbar_2^b}=\frac{\hbar_1\hbar_2}{\hbar_1+\hbar_2} \big( e^{y (\frac{1}{\hbar_1}+\frac{1}{\hbar_2}) }  -1 \big)$, and
we  denote   $\eta_0:=\eta|_{\ft'=0}$. 


 For $Cont_{I}$, using $[\cW_{(\ga)}]\virt=  \frac{1}{2} \black [\cW_{(\ti\ga)}]\virt  $, MSP II   gives   
 us
  \begin{align*} Cont_{I}=& \frac{ 1}{2 }\sum_{d=1}^\infty (-\sQ)^d  \sum_{(\ti\ga)\in \ti\Delta^{11,\circ}_d}\int_{[\cW_{(\ti\ga)}]\virt}   \frac{ e\bl  \ti\pi\lsta \ti f\sta  (L_{\ti\ga}L_{\ft} (-\fD_1-\fD_2)) \br }{e(N_{\cW_{(\ti\ga)}/\cW}\virt)}\cdot  \frac{\ft}{(5\ft)^2}\frac{-5\ft^6}{-\psi_1-\psi_2}. \black
 \end{align*}
 
     \subsection{Computations of Contri II }
 Let  $\ti\Delta^{11,\circ}_d$  denote the graphs    in $\ti\Delta^{11}_d$ which has only one vertex at $V_0$ and whose both markings  are unstable. \black Using \eqref{star} and   the geometry of cap, 
 we can argue to obtain
 \begin{align} \label{capred}
\fX:=& \fR_{\hbar_2=0}\fR_{\hbar_1=0}  \big\{ \hbar_1^{-a}\hbar_2^{-b}Coe_{(\ft')^2} \ti\sZ_{66,Sep}\sta \big \}  \nonumber  \\
 =&  \frac{-1}{2 }\sum_{d=1}^\infty (-\sQ)^d  \sum_{(\ti\ga)\in \ti\Delta^{11,\circ}_d}\int_{[\cW_{(\ti\ga)}]\virt}   \frac{ e\bl  \ti\pi\lsta \ti f\sta  (L_{\ti\ga}L_{\ft} (-\fD_1-\fD_2)) \br }{e(N_{\cW_{(\ti\ga)}/\cW}\virt)}\cdot \ft^{10}  \frac{-1}{\psi_1^{a+1}}  \cdot \frac{-1}{\psi_2^{b +1}}.
 \end{align}  For each $\ti\ga\in\ti\Delta^{11,\circ}_d$, let $\bar{\psi}_i$ be the $\psi$-class of the marking (seperated from the node) on $\cC_v$. 
 For convenience,  we introduce  $W_{\ee_i}:=Q$ with $[W_{\ee_i}]^{\virt}=-Q/d_{\ee_i}$, and we let $d_i=d_{\ee_i}$,  and $h_{\ee_i},h_i$ be the classes on $W_{\ee_i},\barM_{0,2}(\Pf,d_v)^p$ respectively induced by the hyperplane of $\Pf$ . Also let  $D_1,D_2$ be  divisors of $\cC_{e_1},\cC_{e_2}$ given by the nodes. \black
  There are two cases, depending on $d_v>0$ or $d_v=0$, which we denote by $   Cont_{II}^+$ and $Cont_{II}^0$ respectively. Then $Cont_{II}=Cont_{II}^++Cont_{II}^0$. \black

(1)   Case one: $d_{v}>0$.  We have an exact sequence of sheaves
$$  0 \to  L_{\ti\ga}L_{\ft} (-\fD_1)|_{\cC_{\ee_1}} (-D_1) \oplus   L_{\ti\ga}L_{\ft} (-\fD_2)|_{\cC_{\ee_2}} (-D_2)\to  L_{\ti\ga}L_{\ft} (-\fD_1-\fD_2) \to    L_{\ti\ga}L_{\ft}|_{\cC_{v}}\to 0.$$
 Thus, up to capping with $[\cW_{\ti\ga}]\virt$, one has
\begin{eqnarray}\label{e12} e\bl  \ti\pi\lsta \ti f\sta  (L_{\ti\ga}L_{\ft} (-\fD_1-\fD_2)) \br&=&\prod_{a=1}^2  e\bl  \ti\pi\lsta \ti f\sta \big(  L_{\cC_{\ee_a}}L_{\ft} (-\fD_a-D_a) \big) \br  \ \ \cdot \ \  e\bl  \ti\pi\lsta \ti f\sta    L_{\ti\ga}L_{\ft}|_{\cC_{v}} \br \\
&=& \prod_{j=1}^{d_1-1} \frac{j(h_{\ee_1}+\ft)}{d_1} \  \ \cdot \ \   \prod_{j=1}^{d_2-1} \frac{j(h_{\ee_2}+\ft)}{d_2} \ \ \  \cdot \   \ft^{d_{v}+1}. \nonumber
\end{eqnarray}
 Recall $\frac{1}{5}\sum_{j=0}^3 h^j\otimes h_{\ee_i}^{3-j}$ is the Poincar\'e dual of the diagonal   in $Q\times Q$.
By \cite{CLLL2} 
, $e_T(R\pi_{v\ast} \cL_{v}\otimes L_\ft)=\ft^{d_{v}+1}$,  $$[\cW_{(\ti\ga)}]\virt= \big[ \prod_{i=1}^2\bl \frac{1}{5}\sum_{j=0}^3 h_i^j\otimes h_{\ee_i}^{3-j} \br \big] \cap \bigg( [\barM_{0,2}(\Pf,d_{v})^p]\virt \times \bl [W_{\ee_1}]\virt \times [W_{\ee_2}]\virt \br \bigg),
$$
\begin{align*}\frac{1}{e(N_{\cW_{(\ti\ga)}/\cW}\virt)}= \frac{ (5\ft)^2 (h_{\ee_1}+\ft)(h_{\ee_2}+\ft) }{e_T(R\pi_{v\ast} \cL_{v}\otimes L_\ft)}   \frac{ A_{\ee_1} \cdot A_{\ee_2}}{(\psi_1-\bar{\psi}_1)(\psi_2-\bar{\psi}_2)}   .
\end{align*}
 And for $i=1,2$, $\psi_i=\frac{h_i+\ft}{d_i}$ and $\bar{\psi_i}^3=0$ give
  $ \frac{1}{\psi_i-\bar{\psi}_i}=\sum_{\ell_i=0}^2 \frac{\bar\psi_i^{\ell_i}}{\psi_i^{\ell_i+1}}.$
 The above relation and \eqref{e12}    gives
\begin{align}\label{ee}\frac{ e\bl  \ti\pi\lsta \ti f\sta  (L_{\ti\ga}L_{\ft} (-\fD_1-\fD_2)) \br }{e(N_{\cW_{(\ti\ga)}/\cW}\virt)}=        \frac{B_{d_1}(h_{\ee_1})B_{d_2}(h_{\ee_2}) }{(\psi_1-\bar{\psi}_1)(\psi_2-\bar{\psi}_2)}.\end{align}

Thus contribution of case $d_v>0$ to     \eqref{capred}  is 
   \begin{eqnarray*}
  \fX^+ &=& 
  \frac{-1}{2 } \sum_{j_1,j_2=0}^3 \sum_{\ell_1,\ell_2=0}^2   \bigg(  \sum_{d_v =1}^\infty (-\sQ)^{d_v } \int_{[\barM_{0,2}(\Pf,d_{v})^p]\virt}  \bar{\psi}_1^{\ell_1} ev_1\sta h_{1}^{j_1}  \bar{\psi}_2^{\ell_2}  ev_2\sta h_{2}^{j_2} \frac{1}{\ft^{d_v+1}}  \ft^{d_{v}+1} \bigg)\\
 &&\cdot  \bigg(   \frac{5\ft}{5} \black \sum_{d_1=1}^\infty (-\sQ)^{d_1} \int_{ [W_{\ee_1}]\virt }    h_{\ee_1}^{3-j_1}    \frac{h_{\ee_1}+\ft}{ \psi_1^{\ell_1+1}  } A_{\ee_1}  \bl   \prod_{j=1}^{d_1-1} \frac{j(h_{\ee_1}+\ft)}{d_1} \br \      \frac{\ft^{5} }{\psi_1^{a+1}}\bigg)  \\
  &&\cdot \bigg(  \frac{5\ft}{5} \black \sum_{d_2=1}^\infty (-\sQ)^{d_2} \int_{ [W_{\ee_2}]\virt }    h_{\ee_2}^{3-j_2}    \frac{h_{\ee_2}+\ft}{\psi_2^{\ell_2+1}} A_{\ee_2}    \bl  \prod_{j=1}^{d_2-1} \frac{j(h_{\ee_2}+\ft)}{d_2}  \br\    \frac{\ft^{5} }{\psi_2^{b+1}}\bigg)\\  
   &=&\frac{1}{2 } \sum_{j_1,j_2=0}^1 \sum_{\ell_1,\ell_2=0}^2   \bigg(    \sum_{d_v =1}^\infty \sQ^{d_v } <\tau_{\ell_1}(h^{j_1}) \tau_{\ell_2}(h^{j_2})>_{g=0,d_v}     \bigg) \\ 
   && \cdot  \bigg( \frac{\ft^5}{5} \sum_{d_1=1}^\infty (-\sQ)^{d_1} \int_{ [W_{\ee_1}]\virt }       \frac{ h_{\ee_1}^{3-j_1} }{ \psi_1^{\ell_1+2+a}  }  B_{d_1}(h_{\ee_1})  \bigg)  \frac{\ft^5}{5} \sum_{d2=1}^\infty (-\sQ)^{d_2} \int_{ [W_{\ee_2}]\virt }       \frac{ h_{\ee_2}^{3-j_2} }{ \psi_2^{\ell_2+2+b}  }  B_{d_2}(h_{\ee_2})  .\\
 \end{eqnarray*}

  
Thus the contribution from Case one  is 
  \begin{align*} 
&Cont_{II}^+
 =   \frac{1 }{ 5\ft^5     }   \sum_{  a,b \geq 0}^\infty
   \frac{(-1)^{a+b}}{a!b!}
\fX^+\black
\frac{\eta_0^{a+b+1}}{a+b+1} \nonumber  
= \frac{1}{2 }  \frac{\ft^5 }{ 5}
 \sum_{j_1,j_2=0}^1 \sum_{\ell_1,\ell_2=0}^2   \sum_{d_v =1}^\infty \sQ^{d_v } <\tau_{\ell_1}(h^{j_1}) \tau_{\ell_2}(h^{j_2})>_{g=0,d_v}    \\
   & \cdot  \bigg( \frac{1}{5^2} \sum_{d_1,d_2=1}^\infty (-\sQ)^{d_1+d_2} \int_{ [W_{\ee_1}]\virt \times  [W_{\ee_2}]\virt}       \frac{ h_{\ee_1}^{3-j_1} }{ \psi_1^{\ell_1+1}  }  \frac{ h_{\ee_2}^{3-j_2} }{ \psi_2^{\ell_2+1}  } B_{d_1}(h_{\ee_1})    B_{d_2}(h_{\ee_2})     \frac{ e^{(-\eta_0) (\frac{1}{\psi_1}+\frac{1}{\psi_2})}-1 }{\psi_1+\psi_2}    \black   \bigg).   \nonumber
  \end{align*}

(2) Case  two: $d_{v}=0$. Then $v$ is unstable. Similarly by \cite{CLLL2}, $[\cW_{(\ti\ga)}]\virt=  \frac{[-Q]}{d_1d_2}  , \ \psi_i=\frac{h_{e_i}+\ft}{d_i}$, 
\begin{eqnarray*}&&\frac{1}{e(N_{\cW_{(\ti\ga)}/\cW}\virt)}= (5\ft)^2  \frac{(h+\ft)A_{\ee_1}A_{\ee_2}}{\psi_1+\psi_2},        
\,
 e\bl  \ti\pi\lsta \ti f\sta  (L_{\ti\ga}L_{\ft} (-\fD_1-\fD_2)) \br\\
 &=&(h+\ft)\prod_{j=1}^{d_1-1} \frac{j(h_{\ee_1}+\ft)}{d_1}  \prod_{j=1}^{d_2-1} \frac{j(h_{\ee_2}+\ft)}{d_2}  .\end{eqnarray*}
 Thus contribution of Case two  to  \eqref{capred} is,   
   \begin{eqnarray*}
 &&\fX^0:=  \frac{-1}{2 }          \sum_{d_1,d_2=1}^\infty \frac{(-\sQ)^{d_1+d_2}}{d_1d_2} \bigg(\int_{ [-Q] }    (5\ft)^2  \frac{h+\ft}{\psi_1+\psi_2}    A_{\ee_1} A_{\ee_2}   \\
 &&\cdot \bl   \prod_{j=1}^{d_1-1} \frac{j(h_{\ee_1}+\ft)}{d_1} \br   \bl  \prod_{j=1}^{d_2-1} \frac{j(h_{\ee_2}+\ft)}{d_2}  \br   (h+\ft) \black \ft^{10}   \frac{1}{\psi_1^{a+1}}
           \frac{1}{\psi_2^{b+1}} \bigg)\\
           & =&  \frac{1}{2 }          \sum_{d_1,d_2=1}^\infty \frac{(-\sQ)^{d_1+d_2}}{d_1+d_2} \int_{ Q }    (5\ft)^2     A_{\ee_1} A_{\ee_2}   \bl   \prod_{j=1}^{d_1-1} \frac{j(h_{\ee_1}+\ft)}{d_1} \br   \bl  \prod_{j=1}^{d_2-1} \frac{j(h_{\ee_2}+\ft)}{d_2}  \br   (h+\ft) \black \ft^{10}   \frac{1}{\psi_1^{a+1}}          \frac{1}{\psi_2^{b+1}} . 
                           \end{eqnarray*}
 

 We see that the contribution  of Case 2  to $Contri_{II}$ is
   \begin{eqnarray*}
  Cont_{II}^0
 &=&   \frac{1 }{ 5\ft^5     }   \sum_{  a,b = 0}^\infty  \frac{(-1)^{a+b}}{a!b!} \fX^0\black \frac{\eta_0^{a+b+1}}{a+b+1}.
   \end{eqnarray*}


     \subsection{Computations of Contri I}  The graph of the contribution type $I$ is a type $B$ graph $\ga$ with the unique vertex $\ti v\in V_1(\ga)$ unstable.  Let $\ti \ga$ be the  graph obtained by resolving the node at $V_1$ of $\ga$ shown in the Figure Graph $\ti \Gamma$. Then    $\ti\ga$ is a  graph in $\ti\Delta^{11,\circ}_d$.  We have $[\cW_{(\ga)}]\virt=  \frac{1}{2} \black [\cW_{(\ti\ga)}]\virt . $    The contribution of $\ga$ 
 is
  \begin{align*}  \frac{ 1}{2 }\sum_{d=1}^\infty (-\sQ)^d  \sum_{(\ti\ga)\in \ti\Delta^{11,\circ}_d}\int_{[\cW_{(\ti\ga)}]\virt}   \frac{ e\bl  \ti\pi\lsta \ti f\sta  (L_{\ti\ga}L_{\ft} (-\fD_1-\fD_2)) \br }{e(N_{\cW_{(\ti\ga)}/\cW}\virt)}\cdot  \frac{\ft}{(5\ft)^2}\frac{-5\ft^6}{-\psi_1-\psi_2}. \black
 \end{align*}

  There are two cases, depending on $d_v>0$ or $d_v=0$.

\begin{enumerate}

\item   Case one: $d_{v}>0$.  The  factors   are the same as those in $Cont_{II}^+$.  By \eqref{ee}, one has
  \begin{eqnarray*}
  Cont_{I}^+
=&&  \frac{-1}{2 }  \frac{\ft^5}{5}    \sum_{j_1,j_2=0}^1 \sum_{\ell_1,\ell_2=0}^2   \bigg(    \sum_{d_v =1}^\infty \sQ^{d_v } <\tau_{\ell_1}(h^{j_1}) \tau_{\ell_2}(h^{j_2})>_{g=0,d_v}     \bigg)  \\
 &&  \bigg( \frac{1}{5^2} \sum_{d_1,d_2=1}^\infty (-\sQ)^{d_1+d_2} \int_{ [W_{\ee_1}]\virt\times [W_{\ee_2}]\virt  }       \frac{ h_{\ee_1}^{3-j_1} }{ \psi_1^{\ell_1+1}  }  B_{d_1}(h_{\ee_1})           \frac{  h_{\ee_2}^{3-j_2}  }{\psi_2^{\ell_2+1}}  B_{d_2}(h_{\ee_2})    \frac{1}{\psi_1+\psi_2} \black   \bigg). \\
 \end{eqnarray*} 




\item   Case two: $d_{v}=0$.  The factors are the same as those  in 
  $Cont_{II}^0$. 
Thus contribution of this case     is,   noting $h_{\ee_1}=h_{\ee_2}=h$ over the diagonal $Q\sub Q\times Q$,
\black
    \begin{eqnarray*}
    Cont_{I}^0
  &=&   \frac{1}{2 }          \sum_{d_1,d_2=1}^\infty \frac{(-\sQ)^{d_1+d_2}}{d_1d_2}  \int_{ [-Q] }    (5\ft)^2  \frac{(h+\ft) A_{\ee_1} A_{\ee_2}}{\psi_1+\psi_2}       \bl   \prod_{j=1}^{d_1-1} \frac{j(h_{\ee_1}+\ft)}{d_1} \br   \bl  \prod_{j=1}^{d_2-1} \frac{j(h_{\ee_2}+\ft)}{d_2}    \br   \frac{ \ft  (h+\ft) }{ \psi_1+\psi_2} \black    \frac{  \ft^4}{5}  \nonumber\\
    &=& - \frac{1}{2 }       \frac{  \ft^5}{5}      \sum_{d_1,d_2=1}^\infty \frac{(-\sQ)^{d_1+d_2}}{d_1+d_2} \int_{ Q }    (5\ft)^2      A_{\ee_1} A_{\ee_2}   \bl   \prod_{j=1}^{d_1-1} \frac{j(h_{\ee_1}+\ft)}{d_1} \br   \bl  \prod_{j=1}^{d_2-1} \frac{j(h_{\ee_2}+\ft)}{d_2}  \br     \frac{ (h+\ft)\black   }{ \psi_1+\psi_2}. \nonumber
             \end{eqnarray*}
\black
\end{enumerate} 
\subsection{Summations $Cont_{II}^++Cont_I^+$ and   $Cont_{II}^0+Cont_I^0$} 
 Using $$  \sum_{  a,b = 0}^\infty
   \frac{(-1)^{a+b}}{a!b!}    \frac{1}{\psi_1^{a+1}}          \frac{1}{\psi_2^{b+1}}   \black
\frac{\eta_0^{a+b+1}}{a+b+1}
=\frac{-1}{\psi_1+\psi_2}\big(  e^{(-\eta_0)(\frac{1}{\psi_1}+\frac{1}{\psi_2})} -1  \big),$$
 we see the sum $Cont_{II}^++Cont_I^+$ equals 
      \begin{eqnarray}\label{II+I+}
&&Cont^+:=Cont_{II}^+  +Cont_{I}^+
 = \frac{1}{2 }  \frac{\ft^5 }{ 5}
 \sum_{j_1,j_2=0}^1 \sum_{\ell_1,\ell_2=0}^2   \bigg(    \sum_{d_v =1}^\infty \sQ^{d_v } <\tau_{\ell_1}(h^{j_1}) \tau_{\ell_2}(h^{j_2})>_{g=0,d_v}     \bigg)  \nonumber\\
   &&\cdot \bigg(- \displaystyle\frac{1}{5^2} \sum_{d_1,d_2=1}^\infty (-\sQ)^{d_1+d_2} \int_{ [W_{\ee_1}]\virt \times  [W_{\ee_2}]\virt}       \frac{ h_{\ee_1}^{3-j_1} }{ \psi_1^{\ell_1+1}  }  \frac{ h_{\ee_2}^{3-j_2} }{ \psi_2^{\ell_2+1}  } B_{d_1}(h_{\ee_1})    B_{d_2}(h_{\ee_2})     \frac{ e^{(-\eta_0) (\frac{1}{\psi_1}+\frac{1}{\psi_2})} }{\psi_1+\psi_2}     \bigg).  
  \end{eqnarray}
      A similar combination holds for $Cont_{II}^0+Cont_{I}^0$.
Recall  $h_{\ee_1}=h=h_{\ee_2}$ over diagonal $Q\sub Q\times Q$. We have
 \begin{eqnarray}\label{II0I0}
   &&Cont^0:= Cont_{II}^0+Cont_{I}^0 
     = - \frac{1}{2 }       \frac{  \ft^5}{5}      \sum_{d_1,d_2=1}^\infty \frac{(-\sQ)^{d_1+d_2}}{d_1+d_2} \bigg(\int_{ Q }     (5\ft)^2      A_{\ee_1} A_{\ee_2}   \\
     &&\cdot \bl   \prod_{j=1}^{d_1} \frac{j(h_{\ee_1}+\ft)}{d_1} \br   \bl  \prod_{j=1}^{d_2} \frac{j(h_{\ee_2}+\ft)}{d_2}   \br \black   \frac{1}{d_1d_2} \frac{1}{\psi_1} \frac{1}{\psi_2}     \frac{h+\ft}{ \psi_1+\psi_2}  e^{(-\eta_0)(\frac{1}{\psi_1}+\frac{1}{\psi_2})}\bigg).
      \nonumber   
  \end{eqnarray}
 
\subsection{  $S$-series  and $I$-functions}

 In this subsection we introduce a $S$-series which is a generating function of MSP's $A_e$ for all edges $e$ connecting $V_0$ with $V_1$. We express $S$-series using Givental's  $I$ -unctions. This will enable us to express the two series $Cont_{II}^++Cont_{II}^0$ and $Cont_{I}^++Cont_{I}^0$ in previous subsections using Givental's  $I$-functions. 
 
For $j\in \ZZ_{\leq 3}$,  $\psi=\frac{h_\ee+\ft}{d_\ee}$, and for arbitrary   $m \in\ZZ $, \black we formulate
\begin{align*}
 T_{j,m}: =\frac{\ft^5}{5}\sum_{d=1}^\infty (-\sQ)^{d} \int_{ [W_{\ee}]\virt }       \frac{ h^{3-j} }{ \psi^{m}  }  B_{d}(h):=S_{j,m-2}.
\end{align*}
Note that this implies $S_{j,m}=0$ whenever $j<0$. Also, by definition, we have
 \begin{align*}
 \int_{ [W_{\ee}]\virt }   d    \frac{ h^{3-j} }{ \psi^{m+2}  }   B_{d}(h) 
  =  \int_{ [W_{\ee}]\virt }  (h+\ft)  \frac{ h^{3-j} }{ \psi^{m+3}  }  B_{d}(h)
   =  \int_{ [W_{\ee}]\virt }       \frac{ h^{3-(j-1)} }{ \psi^{m+3}  }   B_{d}(h)+ \ft\int_{ [W_{\ee}]\virt }       \frac{ h^{3-j} }{ \psi^{m+3}  }   B_{d}(h).
 \end{align*}
This implies directly
 \begin{align*}
  \partial_T S_{j,m}
  = S_{j-1,m+1}+\ft S_{j,m+1},   \qquad \forall j\in\ZZ_{\leq 3},  m\in\ZZ.
 \end{align*}

 We also have  $$S_{j,m}|_{T=-\infty}=0,\qquad   T^{k}S_{j,m}|_{T=-\infty}=0 ,\forall k\in\RR, $$ \black
where $-\infty$ refers to the limit of the point on the real line with the 
real part approaching the  negative infinity.

 On the other hand, denote $I_{\ell}=0=b_\ell(d)$ whenever $\ell<0$. By \eqref{bi}, for $j=0,\cdots,3$, and $m \in\ZZ$, we have   \black 
$$T_{j,m}=  \ft^{5-j-m} \sum_{d=1}^\infty   d^{m-1}   (-\sQ)^{d} \bigg[ - b_j(d) +  mb_{j-1}(d)     -  \binom{m+1}{2} b_{j-2}(d) + \binom{m+2}{3} b_{j-3}(d)  \bigg]. $$

 We denote $I_j(y)=I_j(t)|_{t\mapsto y}$, and $I_j^{(k)}=\partial_y^kI_j(y)$.
 As an example,  we have 
  $$S_{0,-1}=T_{0,1}
 =-\ft^{5} \sum_{d=1}^\infty (-\sQ)^{d}b_0(d) \frac{1}{\ft}=-\ft^4\big( I_0(y)-1\big)|_{y\mapsto T},$$
 $$S_{1,-1}=T_{1,1}
= \ft^3  I_1'|_{t\mapsto T}  -\ft^3   T I_0'(y)\ |_{y\mapsto T}  - \ft^3. $$

Applying the formulae in \eqref{MSPtoGiv}, one directly computes
 \begin{lemm} Denote $I_{\ell}=0$ whenever $\ell<0$. Then for $m>0$ and $j=0,\cdots,3$,  we have
\begin{eqnarray*}T_{j,m}&=& (-1)^{j+1}\ft^{5-j-m} \Big(  I_j^{(m+j-1)}    - t I_{j-1}^{(m+j-1)}   + \frac{t^2}{2} I_{j-2}^{(m+j-1)}     -\frac{t^3}{6}I_{j-3}^{(m+j-1)}\Big) \ |_{t \mapsto T}  \\
&&+(-1)^j \ft^{5-j-m} \binom{m+j-1}{j} \delta_{m,1.} 
\end{eqnarray*}
\end{lemm}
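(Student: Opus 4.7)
My strategy is to reduce the claim to the generating-function identities \eqref{MSPtoGiv}. Set $B_\ell(\sQ):=\sum_{d\ge 1}b_\ell(d)(-\sQ)^d$. Since $D_T:=\sQ\tfrac{d}{d\sQ}$ acts on $(-\sQ)^d$ by multiplication by $d$, the definition of $T_{j,m}$ rewrites as
\[
T_{j,m}=\ft^{5-j-m}\,D_T^{m-1}\!\Bigl[-B_j+mB_{j-1}-\tbinom{m+1}{2}B_{j-2}+\tbinom{m+2}{3}B_{j-3}\Bigr],
\]
with the convention $B_\ell=0$ for $\ell<0$. Under the identification $q\leftrightarrow\sQ$ and $t\leftrightarrow T$, \eqref{MSPtoGiv} yields $B_0=I_0-1$ and, for $1\le\ell\le 3$,
\[
B_\ell=(-1)^\ell\Bigl[D_t^\ell I_\ell-D_t\bigl(tI_{\ell-1}^{(\ell-1)}\bigr)+D_t\bigl(\tfrac{t^2}{2}I_{\ell-2}^{(\ell-1)}\bigr)-D_t\bigl(\tfrac{t^3}{6}I_{\ell-3}^{(\ell-1)}\bigr)\Bigr].
\]

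Next I would substitute these expressions and apply the Leibniz rule to each piece $\partial_t^m(\tfrac{t^r}{r!}I_k^{(\ell-1)})=\sum_{i=0}^{r}\binom{m}{i}\tfrac{t^{r-i}}{(r-i)!}I_k^{(\ell-1+m-i)}$, collecting by monomials of the form $t^sI_{j-s}^{(\ast)}$. After this collection, every monomial of derivative order strictly less than $m+j-1$ appears in contributions from several $B_\ell$'s simultaneously; the specific binomial weights $-1,\,+m,\,-\tbinom{m+1}{2},\,+\tbinom{m+2}{3}$, which equal $(-1)^{k+1}\tbinom{m+k-1}{k}$ (the coefficients of $-(1-z)^{-m}$), are tuned precisely so that these cross-terms pairwise cancel by a Chu--Vandermonde type identity. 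Only the top derivative-order monomials at order $m+j-1$ survive, and their coefficients collapse to $(-1)^{j+1}(-1)^s/s!$ for $s=0,1,2,3$, producing the claimed main term
\[
(-1)^{j+1}\ft^{5-j-m}\bigl[I_j^{(m+j-1)}-tI_{j-1}^{(m+j-1)}+\tfrac{t^2}{2}I_{j-2}^{(m+j-1)}-\tfrac{t^3}{6}I_{j-3}^{(m+j-1)}\bigr]\big|_{t\to T}.
\]
As a sanity check, for $j=3$ the $I_2^{(m+1)}$ terms arise as $(m-1)I_2^{(m+1)}$ (from combining $-I_2''$ in $-B_3$ with $mI_2''$ in $mB_2$) plus $-(m-1)I_2^{(m+1)}$ (from $\partial_t^{m-1}(-tI_2''')$), which sum to zero, leaving only $-tI_2^{(m+2)}$.

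Finally, the $\delta_{m,1}$ correction comes entirely from the lone constant $-1$ in $B_0=I_0-1$: its coefficient inside the linear combination above equals $(-1)^{j+1}\tbinom{m+j-1}{j}$ for each $j\in\{0,1,2,3\}$, and since $D_T^{m-1}$ kills constants for $m\ge 2$ but acts as the identity for $m=1$, the $-1$ contributes exactly $(-1)^j\tbinom{m+j-1}{j}\delta_{m,1}$, matching the stated formula after multiplication by $\ft^{5-j-m}$. The main obstacle will be the middle step: while the cancellation is mechanical, it requires writing out the four Leibniz expansions in parallel and verifying by Chu--Vandermonde that all sub-top-order monomials vanish --- a routine but tedious bookkeeping that the paper elides by asserting ``one directly computes''.
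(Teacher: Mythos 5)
Your proposal is correct and follows essentially the same route as the paper, which simply states that the lemma follows from ``applying the formulae in \eqref{MSPtoGiv}'' by direct computation: you rewrite $T_{j,m}$ as $\ft^{5-j-m}D_T^{m-1}$ applied to the binomial-weighted combination of the $B_\ell$'s, substitute the $I$-function expressions, and track the Leibniz cross-terms. Your identification of the $\delta_{m,1}$ correction as coming solely from the constant $-1$ in $B_0=I_0-1$, and your sample cancellation check, are consistent with what a full expansion gives (e.g.\ for $j=2$ the coefficients of $I_1^{(m)}$, $tI_0^{(m)}$ and $I_0^{(m-1)}$ all vanish identically), so the plan is sound.
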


\begin{lemm}\label{S-series}
    For arbitrary $\ell\in\ZZ_{\geq -1}$, $j=0,\cdots,3$, we have  
\begin{align*}\sum_{  a = 0}^\infty  \frac{(-1)^{a}}{a!}         S_{j,a+\ell}      (-\ft(y-T))^{a}
=& (-1)^{j+1}\ft^{3-j-\ell} \bigg(  I_j^{(\ell+j+1)}(y)  -T I_{\ell+j}^{(\ell+j+1)}(y) +\frac{T^2}{2} I_{\ell+j-1}^{(\ell+j+1)}(y)\\
& -\frac{T^3}{6} I_{\ell+j-2}^{(\ell+j+1)}(y)   \bigg)+(-1)^j\ft^{3-j-\ell}\binom{\ell+j+1}{j}\delta_{\ell,-1}.
\end{align*} 
\end{lemm}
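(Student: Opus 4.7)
The plan is to deduce this identity from the closed-form expression for $T_{j,m}$ established in the preceding lemma, combined with a formal Taylor expansion around $t=T$. Since $S_{j,m}=T_{j,m+2}$, the previous lemma, applied at $m=a+\ell$, gives
\begin{align*}
S_{j,a+\ell}=\,&(-1)^{j+1}\ft^{3-j-a-\ell}\Big(I_j^{(a+\ell+j+1)}-t\,I_{j-1}^{(a+\ell+j+1)}+\tfrac{t^2}{2}I_{j-2}^{(a+\ell+j+1)}-\tfrac{t^3}{6}I_{j-3}^{(a+\ell+j+1)}\Big)\Big|_{t\mapsto T}\\
&+(-1)^j\ft^{3-j-a-\ell}\binom{a+\ell+j+1}{j}\delta_{a+\ell,-1}.
\end{align*}

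Next, I multiply by $\frac{(-1)^a}{a!}(-\ft(y-T))^a=\frac{\ft^a(y-T)^a}{a!}$ and sum over $a\geq 0$. The factor $\ft^a$ cancels the $\ft^{-a}$ inside $S_{j,a+\ell}$, leaving the overall prefactor $\ft^{3-j-\ell}$ and sums of the shape $\sum_{a\geq 0}\frac{(y-T)^a}{a!}I_{j-k}^{(a+\ell+j+1)}(T)$ for $k=0,1,2,3$. Each such sum is then closed up by the formal Taylor identity
$$\sum_{a=0}^\infty \frac{(y-T)^a}{a!}\,f^{(a+N)}(T)=f^{(N)}(y),$$
valid in the ring of formal power series in $e^T$, applied with $f=I_{j-k}$ and $N=\ell+j+1$. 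This produces the four $I$-terms of the claimed RHS; the polynomial prefactors $1,-T,\tfrac{T^2}{2},-\tfrac{T^3}{6}$ are the ``outside'' $T$'s from the lemma that were not touched by the Taylor expansion. Finally, the Kronecker constraint $\delta_{a+\ell,-1}$ with $a\geq 0$ and $\ell\geq-1$ fires only at $\ell=-1,\,a=0$, where the weight $\frac{(y-T)^0}{0!}=1$, so exactly $(-1)^j\ft^{3-j-\ell}\binom{\ell+j+1}{j}\delta_{\ell,-1}$ is added.

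The hard part, such as it is, is purely bookkeeping rather than conceptual: reconciling the subscripts $I_{j-k}$ that Taylor expansion naturally produces with the $I_{\ell+j},I_{\ell+j-1},I_{\ell+j-2}$ appearing in the printed statement (note that the two match when $\ell=-1$, which suggests that a mild reinterpretation of indices will be needed), keeping careful track of the $\ft$-exponents, and distinguishing the role of $t$ inside the derivatives (which after expansion becomes $y$) from the polynomial $T$-prefactors (which come from the substitution $t\mapsto T$ before summation and stay fixed). Once these matches are verified, the identity is a direct consequence of Taylor's formula.
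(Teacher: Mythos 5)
Your proof is correct and is exactly the derivation the paper intends (the paper states Lemma \ref{S-series} without proof, immediately after the closed form for $T_{j,m}$): substitute $m=a+\ell+2$ into that formula, multiply by $\ft^{a}(y-T)^{a}/a!$, and resum via the formal Taylor identity, with the $\delta$-term surviving only at $a=0$, $\ell=-1$. Your suspicion about the subscripts is also right: the computation produces $I_{j-1}^{(\ell+j+1)},I_{j-2}^{(\ell+j+1)},I_{j-3}^{(\ell+j+1)}$, and the printed $I_{\ell+j},I_{\ell+j-1},I_{\ell+j-2}$ is a typo (they coincide only at $\ell=-1$); this is confirmed by the downstream use in the $Cont^{+}$ computation, where $j_1=j_2=1$, $\ell_1=\ell_2=0$ must yield $\ft^{2}\bigl(I_1^{(2)}(y)-TI_0^{(2)}(y)\bigr)$ rather than $\ft^{2}\bigl((1-T)I_1^{(2)}(y)+\tfrac{T^2}{2}I_0^{(2)}(y)\bigr)$.
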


     \subsection{Expression of  $Cont^+$  by  $I$-functions}

\black

  Let us define a function, for $j_1,j_2\in\ZZ_{\leq 3},\ell_1, \ell_2\in\ZZ$,

\begin{eqnarray*} && M_{j_1,j_2,\ell_1,\ell_2}(x,T)\\
&&:=- \frac{1}{2 }       \frac{  \ft^6}{5}      \sum_{d_1,d_2=1}^\infty \frac{(-e^T)^{d_1+d_2}}{d_1+d_2} \int_{ [W_{\ee_1}]\virt \times  [W_{\ee_2}]\virt } \frac{1}{5}  \frac{ h_{\ee_1}^{3-j_1} }{ \psi_1^{\ell_1+1}  }  \frac{ h_{\ee_2}^{3-j_2} }{ \psi_2^{\ell_2+1}  } B_{d_1}(h_{\ee_1})B_{d_2}(h_{\ee_2})       \frac{ e^{(\ft x)(\frac{1}{\psi_1}+\frac{1}{\psi_2})}}{ \psi_1+\psi_2} .  
\end{eqnarray*}

   Then
   \begin{eqnarray*}
  && \partial_TM_{j_1,j_2,\ell_1,\ell_2}(x,T)\\
  &=&- \frac{1}{2 }       \frac{  \ft^6}{5}      \sum_{d_1,d_2=1}^\infty (-e^T)^{d_1+d_2}  \int_{ [W_{\ee_1}]\virt \times  [W_{\ee_2}]\virt } \frac{1}{5}  \frac{ h_{\ee_1}^{3-j_1} }{ \psi_1^{\ell_1+1}  }  \frac{ h_{\ee_2}^{3-j_2} }{ \psi_2^{\ell_2+1}  } B_{d_1}(h_{\ee_1})B_{d_2}(h_{\ee_2})       \frac{ e^{(\ft x)(\frac{1}{\psi_1}+\frac{1}{\psi_2})}}{ \psi_1+\psi_2} .
   \end{eqnarray*}


  We have, when $x$ or $T$ approaches real $-\infty$, 
       \begin{align*}  M_{j_1,j_2,\ell_1,\ell_2}(x,T)|_{x\to-\infty}=0,\quad     \partial_TM_{j_1,j_2,\ell_1,\ell_2}(x,T)|_{x\to-\infty}=0  \black ,\quad  M_{j_1,j_2,\ell_1,\ell_2}(x,T) |_{T\to-\infty}=0.
     \end{align*}

      This implies $$\partial_TM_{j_1,j_2,\ell_1,\ell_2}(z,T)=\int_{-\infty}^z\partial_x\partial_TM_{j_1,j_2,\ell_1,\ell_2}(x,T)dx,$$
      $$   M_{j_1,j_2,\ell_1,\ell_2}(x,u)=\int_{-\infty}^u \partial_w  M_{j_1,j_2,\ell_1,\ell_2}(x,w)dw=\int_{-\infty}^u\int_{-\infty}^x\partial_z\partial_wM_{j_1,j_2,\ell_1\ell_2}(z,w)dzdw .$$

      We have a closed formula
  \begin{align*}
   \partial_x\partial_TM_{j_1,j_2,\ell_1,\ell_2}(x,T)
  &= - \frac{1}{2 }       \frac{ 1}{\ft^3}    \sum_{a,b=0}^\infty    S_{j_1,\ell_1+a} S_{j_2,\ell_2+b}     \frac{ (\ft x)^a}{a!}   \frac{ (\ft x)^b}{b!}.
  \end{align*}

   The definition of $M$ enables us to simplify, by \eqref{II+I+} , \eqref{II0I0}         and the above formula of $  \partial_x\partial_TM$,


   \begin{eqnarray*}
 &&Cont^+:=Cont_{II}^+  +Cont_{I}^+ \\
 &=& \frac{1}{ 5\ft}
 \sum_{j_1,j_2=0}^1 \sum_{\ell_1,\ell_2=0}^2   \bigg(    \sum_{d_v =1}^\infty \sQ^{d_v } <\tau_{\ell_1}(h^{j_1}) \tau_{\ell_2}(h^{j_2})>_{g=0,d_v}     \bigg)      \big(\partial_TM_{j_1,j_2,\ell_1,\ell_2}(x,T) \big)|_{x=t-T} \\
 &=& \frac{1}{ 5\ft}  \partial_T^2 F_0  \     \big(\partial_T M_{1,1,0,0}(x,T) \big)|_{x=t-T}  \\
 &&+ \frac{1}{ 5\ft} 2 \partial_T F_0  \bigg(      \big(\partial_T M_{1,0,1,0}(x,T) \big)|_{x=t-T}  -  \big(\partial_T M_{1,0,0,1}(x,T) \big)|_{x=t-T}    \bigg) \\
 &&+  \frac{1}{ 5\ft} 2 F_0 \bigg(    \big(\partial_T  M_{0,0,1,1}(x,T) \big)|_{x=t-T}      -2   \big(\partial_T  M_{0,0,2,0}(x,T) \big)|_{x=t-T} \bigg)\\
  &=&- \frac{1}{10} \  \partial_T^2 F_0 \  \int_{-\infty}^t (I_1^{(2)}-TI_0^{(2)})^2(y)dy  \\
 &&- \frac{1}{ 10 } 2  \partial_T F_0  \bigg(- \int_{-\infty}^t  (I_1^{(3)}-TI_0^{(3)})(y) I_0^{(1)}(y) dy    +  \int_{-\infty}^t   (I_1^{(2)}-TI_0^{(2)})(y) I_0^{(2)}(y)  \bigg) \\
 &&-  \frac{1}{ 10 } 2 F_0 \bigg(      \int_{-\infty}^t (I_0^{(2)})^2(y)   -2   \int_{-\infty}^t I_0^{(3)}(y) I_0^{(1)}(y)dy\bigg),
  \end{eqnarray*}
  where $I_a^{(k)}(y):= \frac{d^k}{dy^k}(I_a(t)|_{t\mapsto y})$, and in last identity we used Lemma \ref{S-series}. Furthermore,  using   $$    F_0 
  =\frac{5}{2}\big( J_1J_2 - J_3\big)-\frac{5}{6}T^3,
  \qquad 
  \partial_T F_0
 =5 J_2 -5\cdot \frac{T^2}{2}, \qquad\partial_T^2 F_0=5J_2'-5T,  
 $$
  one easily calculates
 \begin{eqnarray*}
 &&Cont^+
   =- \frac{1}{2} \   \partial_T J_2 \  \int_{-\infty}^t (I_1^{(2)}-TI_0^{(2)})^2dy  
 - \frac{1}{ 2} 2   J_2 \bigg(  \int_{-\infty}^t           (I_1^{(2)}-TI_0^{(2)}) I_0^{(2)} -  (I_1^{(3)}-TI_0^{(3)}) I_0' dy    \bigg)      \\
 &&-  \frac{1}{ 2}  \partial_T^{-1}(2J_2)     \int_{-\infty}^t (I_0^{(2)})^2   -2    I_0^{(3)} I_0^{(1)}dy 
+ \frac{1}{2} \   T \  \int_{-\infty}^t (I_1^{(2)})^2  dy  -  \ \frac{1}{ 2 }    T^2       (I_1''(t)-\frac{T}{3}I_0''(t))I_0'(t).
  \end{eqnarray*} \black

\subsection{Expression of   $Cont^0$    by $I$-functions}

 In this subsection   we use $I_r$ to mean Givental's  $I$-functions $I_r(t)$,  which is a power series in $q=e^t$ as usual. The difference of the $I_r$ notation from before is just the variable $y$ changed to variable $t$ here.
  \begin{align*}
   & Cont_{II}^0+Cont_{I}^0 
                                            =   - \frac{1}{2 }       \frac{  \ft^5}{5}      \sum_{d_1,d_2=1}^\infty \frac{(-1)^{d_1+d_2} }{(d_1+d_2)^2} \int_{ Q }     B_{d_1}(h) \cdot B_{d_2}(h)     \frac{1}{\psi_1} \frac{1}{\psi_2}        q^{d_1+d_2} \nonumber\\
                                           &  \cdot\bigg(  1+                                        (d_1+d_2) \frac{ (T-t)h}{h+\ft}  +                                           \frac{1}{2} (d_1+d_2)^2(T-t)^2  \frac{  h^2}{\ft^2}      \bigg) 
                                                +  \frac{ 1}{2} (T-t)^2    (I'_0)^2 - \frac{1}{12 }      (T-t)^3    \partial_t  \big( I'_0\cdot I'_0 \big).
                                                                       \end{align*}
 Let us calculate the three terms appeared above respectively.
 \begin{align*}
& - \frac{1}{2 }       \frac{  \ft^5}{5}      \sum_{d_1,d_2=1}^\infty \frac{(-1)^{d_1+d_2} }{(d_1+d_2)^2} \int_{ Q }     B_{d_1}(h) \cdot B_{d_2}(h)     \frac{1}{\psi_1} \frac{1}{\psi_2}    q^{d_1+d_2} \cdot    \bigg(    \frac{1}{2} (d_1+d_2)^2(T-t)^2  \frac{  h^2}{\ft^2}   \bigg)  \frac{(h+\ft)^2}{(h+\ft)^2}\nonumber \\
=&   -       \frac{  \ft^5}{20}   (T-t)^2    \sum_{d_1,d_2=1}^\infty (-1)^{d_1+d_2} \int_{ Q }         \frac{ B_{d_1}(h) \cdot B_{d_2}(h) }{\psi_1 \psi_2}     \frac{1}{ (h+\ft)^2}    q^{d_1+d_2} \cdot         h^2     \nonumber \\
 -&       \frac{  \ft^5}{10}   (T-t)^2    \sum_{d_1,d_2=1}^\infty (-1)^{d_1+d_2} \int_{ Q }         \frac{ B_{d_1}(h) \cdot B_{d_2}(h) }{\psi_1 \psi_2}     \frac{1}{ (h+\ft)^2}    q^{d_1+d_2} \cdot        \frac{  h^3}{\ft}     \nonumber \\
=&   -       \frac{  \ft^5}{100}    (T-t)^2 \sum_{j=0}^1  \sum_{d_1,d_2=1}^\infty (-1)^{d_1+d_2} \int_{ [W_{\ee_1}]\virt \times [W_{\ee_2}]\virt }   h_1^{j+2}h_2^{3-j}      \frac{ B_{d_1}(h_1) \cdot B_{d_2}(h_2)  }{\psi_1^2 \psi_2^2}       q^{d_1+d_2}         \nonumber \\
 -&      \frac{  1}{2}   (T-t)^2    \sum_{d_1,d_2=1}^\infty (-1)^{d_1+d_2}d_1d_2  b_0(d_1)b_0(d_2)          q^{d_1+d_2}           \nonumber \\
 =&    -       \frac{  \ft^5}{100}    (T-t)^2 \sum_{j=0}^1     \frac{5^2}{\ft^{10}}   (S_{1-j,0} S_{j,0})|_{T\mapsto t}      -         \frac{  1}{2}   (T-t)^2   (I_0')^2 
    =        \frac{ 1}{2}   (T-t)^2    \Big(    \  \big(I_1''-tI_0''\big)I_0'     -      (I_0')^2\Big). 
  \end{align*}
Similar calculations give 
  \begin{align*}
&   - \frac{1}{2 }       \frac{  \ft^5}{5}      \sum_{d_1,d_2=1}^\infty \frac{(-1)^{d_1+d_2} }{(d_1+d_2)^2} \int_{ Q }     B_{d_1}(h) \cdot B_{d_2}(h)     \frac{1}{\psi_1} \frac{1}{\psi_2}         q^{d_1+d_2} \cdot   \bigg(   (d_1+d_2) \frac{ (T-t)h}{h+\ft}     \bigg)  \nonumber \\
=& -    \frac{ 1}{2}          (T-t)    \partial_t^{-1} \bigg(\big(I_1''(t)-tI_0''(t)\big)^2+2\big( I_2'''(t)-tI_1'''(t)+\frac{t^2}{2}I_0'''(t) \big) I_0'(t)  \bigg) \\
&+  (T-t)    \partial_t^{-1}  \  \big(I_1''(t)-tI_0''(t)\big)I_0'(t),\nonumber 
 \end{align*}
\begin{align*}
&  - \frac{1}{2 }       \frac{  \ft^5}{5}      \sum_{d_1,d_2=1}^\infty \frac{(-1)^{d_1+d_2} }{(d_1+d_2)^2} \int_{ Q }     B_{d_1}(h) \cdot B_{d_2}(h)     \frac{1}{\psi_1} \frac{1}{\psi_2}         q^{d_1+d_2} \cdot   \\
  =&\partial_t^{-2}   \bigg( \big(I_1''-tI_0''\big) I_0' \bigg) 
  -      \partial_t^{-2} \bigg(
 \big(I_1''-tI_0''\big)^2
+2\big( I_2'''-tI_1'''+\frac{t^2}{2}I_0''' \big) I_0'
  \bigg)  \nonumber \\
    +&
                \partial_t^{-2}    \bigg(   \big(I_3''''-tI_2''''+\frac{t^2}{2}I_1''''-\frac{t^3}{6}I_0'''' \big)   I_0' +    \big( I_2'''-tI_1'''+\frac{t^2}{2}I_0''' \big) \big( I_1''-tI_0'' \big) \bigg),    \end{align*}

where $I_k,I_k',I_k'',\cdots $ denotes $I_k(t),I_k'(t),I_k''(t),\cdots$.
   We then conclude
     \begin{align*}
 & Cont_{II}^0+Cont_{I}^0 
  =   - \frac{1}{12 }      (T-t)^3    \partial_t  \big( I'_0\cdot I'_0 \big) 
  +      \frac{ 1}{2}    (T-t)^2       \  \big(I_1''-tI_0''\big)I_0'    \\
  & -    \frac{ 1}{2}          (T-t)    \partial_t^{-1} \bigg(
 \big(I_1''-tI_0''\big)^2
+2\big( I_2'''-tI_1'''+\frac{t^2}{2}I_0''' \big) I_0'
  \bigg) +  (T-t)    \partial_t^{-1}    \bigg( \big(I_1''-tI_0''\big)I_0' \bigg) \nonumber \\
  &+ \partial_t^{-2}   \bigg( \big(I_1''-tI_0''\big) I_0' \bigg) 
  -        \partial_t^{-2} \bigg(
 \big(I_1''-tI_0''\big)^2
+2\big( I_2'''-tI_1'''+\frac{t^2}{2}I_0''' \big) I_0'
  \bigg)  \nonumber \\
    &+
                \partial_t^{-2}    \bigg(   \big(I_3''''-tI_2''''+\frac{t^2}{2}I_1''''-\frac{t^3}{6}I_0'''' \big)   I_0' +    \big( I_2'''-tI_1'''+\frac{t^2}{2}I_0''' \big) \big( I_1''-tI_0'' \big) \bigg).  
    \end{align*}

   One can combine a few terms to write in terms of total derivatives,
        \begin{align*}
  Cont^0:=Cont_{II}^0+Cont_{I}^0 
     =  &     - \frac{1}{12 }      (T-t)^3    \partial_t  \big( I'_0\cdot I'_0 \big)+  \frac{ 1}{2}    (T-t)^2       \  \big(I_1''-tI_0''\big)I_0'   \\
    - &    \frac{ 1}{2}          (T-t)    \partial_t^{-1} \bigg(\big(I_1''-tI_0''\big)^2+2\big( I_2''-tI_1''+\frac{t^2}{2}I_0'' \big)' I_0'\bigg)     \nonumber   \\
    +&      \partial_t^{-2}    \bigg(   \big(I_3'''-tI_2'''+\frac{t^2}{2}I_1'''-\frac{t^3}{6}I_0''' \big)'   I_0' +    \big( I_2''-tI_1''+\frac{t^2}{2}I_0'' \big)' \big( I_1'-tI_0' \big)' \bigg)   \nonumber.  
    \end{align*}

    After simplifications one has
            \begin{align*}
         Cont^0      =  
  &            \big( \frac{T^2}{2}I_1'' -\frac{T^3}{6}I_0'' \big)I_0'(t) - \frac{1}{2} T \partial_t^{-1} (I_1'')^2    -(T-t)\partial_t^{-1}(I_2'''I_0' ) \\
  &+ \partial_t^{-2}\Big(  \big(I_3'''-tI_2'''  \big)'   I_0' +     I_2''' \big( I_1'-tI_0' \big)'  \Big). \\  \nonumber
                  \end{align*}

  Combining previous subsection and   using  Proposition \ref{lemmadesiredidentity},   the type $B$ contribution is
     \begin{align}\label{simp}
    & Cont^++Cont^0\\
    =&  - \frac{1}{2} \   \partial_T J_2 \  \int_{-\infty}^t (I_1^{(2)}-TI_0^{(2)})^2dy  -   J_2 \Big(  \int_{-\infty}^t           (I_1^{(2)}-TI_0^{(2)}) I_0^{(2)} -  (I_1^{(3)}-TI_0^{(3)}) I_0^{(1)} dy    \Big)   \nonumber    \\
 &-     \partial_T^{-1}( J_2)     \int_{-\infty}^t (I_0^{(2)})^2   -2    I_0^{(3)} I_0^{(1)}dy \nonumber  + \partial_t^{-2}\Big(  \big(I_3'''-tI_2'''  \big)'   I_0' +     I_2''' \big( I_1'-tI_0' \big)'  \Big)   -(T-t)\partial_t^{-1}(I_2'''I_0' )\nonumber\\
  = & \frac{1}{2}\frac{T''}{T'} + 2 \frac{I_0'}{I_0}+ \frac{\ln(1-5^5e^t)}{5}. \nonumber
    \end{align}

 \section{Graph Type C}

 A type $C$ graph $\ga$ is of the following form:

\begin{center}
\begin{picture}(20,28)

\put(-11,23){\circle*{2}}
\put(-6,18){$\cdots$}
\put(-13,4){$\Gamma_1$}
\put(-15,25){$\psi_1$}
 \put(-11,22){\line(0,-1){15}}
\put(-11,20){$\psi_{\ga_1}$}
\put(-9,12){$e_1$}
\put(-11,5){\circle{5}}

\put(2,23){\circle*{2}}
\put(0.5,4){$ \Gamma_k$}
\put(2,20){$\psi_{\ga_k}$}
\put(4,12){$e_k$}
\put(-2,25){$\psi_k$}
 \put(2,22){\line(0,-1){15}}
\put(2,5){\circle{5}}

 \put(-15,23){\line(1,0){20}}

\put(-18, 22){$v$}
\put(12,22){$g_v=1$}


\end{picture}
\end{center}
 
Separating all black nodes, $\ga$ is decomposed into $v$
and rational tails $\{{\ga_i}\}_{i=1}^k$.  Let    $d_i=\deg\ga_i$. Then $d_1+d_2+\cdots + d_k = n .$
 Thus
the contribution of $\ga$ to \eqref{PI} 
 is \black 
  \begin{eqnarray*}
&& \frac{\ft\prod|Aut\ga_i|}{|Aut\ga|}  \sum_{a+\sum a_i=n-1}\int_{[\cW_{v}]\virt} \frac{c_{a}\big( R\pi_{\cW_{v}\ast}(\cL_{\cW_{v}}   \cL_\ft)\big) }{e(N\virt_{\cW_{v}})}    \prod_{i=1}^k \int_{ [\cW_{(\ga_i)}]\virt} \frac{-\ft^4c_{a_i}\big( \pi_{\cW_{\ga_i}\ast}(\cL_{\cW_{\ga_i}}   \cL_\ft (-D_i) )\big)}{5(-\psi_{\ga_i}-\psi_i)e(N\virt_{\cW_{(\ga_i)} })}   \\
 &=&  \frac{ \prod|\Aut\ga_i|}{|\Aut\ga|}   \frac{\ft^{4k+1}}{5^k}  \sum_{a=0}^{n-1} \int_{[\cW_{v}]\virt} \frac{c_{a}\big( \pi_{\cW_{v}\ast}(\cL_{\cW_{v}}   \cL_\ft)\big) }{e(N\virt_{\cW_{v}})}  \\
 &&\cdot  {\rm Coe}_{(\ft')^{a+1}}   \prod \int_{ [\cW_{(\ga_i)}]\virt}  \frac{e\big( \pi_{\cW_{\ga_i}\ast}(\cL_{\cW_{\ga_i}}     \cL_\ft \cL_{\ft'} (-D_i)   )\big)}{ (\psi_{\ga_i}+\psi_i)e(N\virt_{\cW_{(\ga_i)} })} .\end{eqnarray*}


 Since $\cW_{v}=\barM_{1,k} $ and, over its universal curve, the section $\nu_1$ trivializes $ 
 \cL_{\cW_{v}} \cL_\ft $,   $R\pi_{\cW_{v}\ast} ( \cL_{\cW_{v}} \cL_\ft  ) =  R\pi_{\cW_{v}\ast}\sO_{\cW_{v}}$  and 
 $c( R\pi_{\cW_{v}\ast} ( \cL_{\cW_{v}} \cL_\ft  ) )=\frac{1}{1-\lam}=1+\lam $. Also  by   \eqref{eqn:Av}, over $[\cW_{v}]\virt$ one has 
  $$\frac{1}{e(N\virt_{\cW_{(v)} })}=\left(\frac{\lam+\ft}{\ft}\right)^5 \cdot \left( \frac{5\ft}{\lam+5\ft}\right)= \left(1+\frac{5\lam}{\ft}\right)\left(1-\frac{\lam}{5\ft}\right)
  =1+\frac{24}{5}\frac{\lam}{\ft}.$$
  Let $\Delta_C$ be the subset  of  type $C$ graphs with degree $(d,0)$  for $d=1,2,\cdots$. 


      Then the total contribution of type $C$ graphs is, using \eqref{hatP}, 
    \begin{eqnarray}\label{contriC} 
&&   {\rm Contri}(C)
= \sum_{k=1}^\infty  \frac{1}{k!}  \left(\frac{\ft^4}{5}\right)^k    \sum_{a=0}^{1} \int_{\barM_{1,k}}  c_{a}\big( R\pi_{\cW_{v}\ast}(\cL_{\cW_{v}}   \cL_\ft)\big) \left(\ft+\frac{24}{5}\lam\right)   {\rm Coe}_{(\ft')^{a+1}}  \prod_{i=1}^k\hat\sP(\psi_i) \nonumber\\
 &=&
  \sum_{k=1}^\infty  \frac{1}{k!}  \left(\frac{\ft^4}{5}\right)^k    \int_{\barM_{1,k}} \left[    \frac{24}{5}\lam   {\rm Coe}_{(\ft')^{1}} \prod_{i=1}^k\hat\sP(\psi_i) 
  +   \lam   \ft   {\rm Coe}_{(\ft')^{2}} \prod_{i=1}^k\hat\sP(\psi_i)  +    \ft  {\rm Coe}_{(\ft')^{1}}  \prod_{i=1}^k\hat\sP(\psi_i)\right]  \nonumber .
     \end{eqnarray}
   Since   $\cZ_6\sta:=\frac{\ft^4}{5}\ti\sZ_{1,5}(\hbar,\sQ)$      is regularizable(c.f. Lemma \ref{Z6reg}),  by  \eqref{hatP}  we have 
\begin{eqnarray*} &&  \sum_{k=1}^\infty  \frac{1}{k!}  (\frac{\ft^4}{5})^k    \int_{\barM_{1,k}}   \lam   \prod_{i=1}^k\hat\sP(\psi_i)
 = \frac{1}{24} \sum_{k=1}^\infty  \frac{(-1)^{k-1}}{k!}  (\frac{\ft^4}{5})^k    \sum_{\substack{  \sum_{\ell=1}^k  a_\ell
 =k-1\\a_\ell\geq 0}}    \binom{k-1}{a_1,\cdots,a_k}      \prod_{\ell=1}^k    \ti\sP_{1+a_\ell} \\
    &=&     \frac{1}{24} \sum_{k=1}^\infty  \frac{1}{k} \black     \sum_{\substack{ \sum_{\ell=1}^k  a_\ell=k-1\\a_\ell\geq 0 }}         \prod_{\ell=1}^k \frac{(-1)^{a_\ell} }{a_\ell !}
    Res_{\hbar=0} \{ \hbar^{-a_\ell}  \sZ_6\sta(\hbar,\sQ)  \} 
    =  \frac{1}{24} Res_{\hbar=0}\ln\big(1+Z^*_6(\hbar, \sQ)\big).
 \end{eqnarray*}

 Using  \eqref{LemmX} in Appendix \ref{secregularityandsomeidentities},  the last term in \eqref{contriC} becomes
$$\sum_{k=1}^\infty  \frac{1}{k!}  \left(\frac{\ft^4}{5}\right)^k   \int_{\barM_{1,k}}   \ft  {\rm Coe}_{\ft'} \hat\sP(\psi_1)\hat\sP(\psi_2)\cdots\hat\sP(\psi_k)
= -\frac{\ft}{24}{\rm Coe}_{\ft'}{\rm Res}_{\hbar=0} \bl \ln(1+\bar{\sZ}_6\sta)) /\hbar \br.
$$

Thus, the type $C$ contribution is
    \begin{eqnarray*}
  {\rm Contri}(C) =  (\frac{24}{5}{\rm Coe}_{\ft'}+\ft {\rm Coe}_{(\ft')^{2}}) \frac{1}{24} {\rm Res}_{\hbar=0}\ln(1+\sZ_6\sta(\hbar))      -\frac{\ft}{24} {\rm Coe}_{\ft'}  {\rm Res}_{\hbar=0} \bl \hbar^{-1} \ln(1+ {\sZ}_6\sta))  \br.  \end{eqnarray*}

 Using \eqref{4coe}, the type $C$ contribution is
\begin{eqnarray*}
&&(\frac{24}{5}{\rm Coe}_{\ft'}+\ft {\rm Coe}_{(\ft')^{2}})\frac{\eta}{24}      -\frac{\ft}{24} {\rm Coe}_{\ft'}  {\rm Res}_{\hbar=0} \bl \hbar^{-1} \log(1+ {\sZ}_6\sta))  \br \\
&= &  - \frac{1}{5}\frac{  g_1}{  I_0} - \frac{1}{5} \ln(1-5^5q) -\frac{1}{12}  \cdot \frac{5^5q}{1-5^5q}.  \nonumber
\end{eqnarray*}

 \section{Graph Type D}
 \begin{center}
\begin{picture}(20,28)

\put(-11,23){\circle*{1}}
\put(-6,18){$\cdots$}
\put(-13,3.5){$\Gamma_1$}
\put(-12,-2){$ d_1$}
\put(-15,25){$\psi_1$}
 \put(-11,22){\line(0,-1){14}}
\put(-17,15){$-\frac{1}{5}$}
\put(-9,12){$e_1$}
\put(-15.5,9){$\ti\psi_1$}
\put(-11,5){\circle{6}}

\put(2,23){\circle*{1}}
\put(0.5,3.5){$\Gamma_\ell$}
\put(1,-2){$ d_\ell$}
\put(4,12){$e_\ell$}
\put(-2,25){$\psi_\ell$}
 \put(2,22){\line(0,-1){14}}
\put(-4,15){$-\frac{1}{5}$}
\put(-2.5,9){$\ti\psi_\ell$}
\put(2,5){\circle{6}}

 \put(-15,23){\line(1,0){20}}

\put(-18, 22){$v$} 
\put(12,22){$g_v=1$}

\end{picture}
\end{center}

\medskip

Let $d_i=\deg\Gamma_i$ and $d=d_1+\ldots+d_\ell$. 
Each $\Gamma_i$ is in $\Xi^1_{d_i}$.  The $\ti\psi_i$ denotes the $\psi$-class of the marking on $\sC_{\Gamma_i}$ and $\psi_i$ is the $\psi$-class of the $i$-th marking on the curve $\sC_v$. \black The contribution from type D graphs is
\begin{eqnarray}\label{integrand}
&&\sum_{\ell=1}\sum_{d_i>0, 1\le i\le \ell} \sum_{(\ga_i) \in \hat\Xi^1_{d_i}  } \frac{(- \sQ)^{d_1+\ldots +d_\ell}}{|\Aut(\{\Gamma_1, \ldots, \Gamma_\ell\})|}\int_{\barM_{1,\ell}^{1/5,5p}\times [\cW_{(\Gamma_1)}]^{vir}\times \cdots \times  [\cW_{(\Gamma_\ell)}]^{vir}}
\ft\cdot \prod_{i=1}^\ell\frac{1}{-\ft-\psi_i} 
\\
&&  \cdot \frac{1}{e_T\big(R\pi_v(\cL_v^\vee \cL_\ft)\big)}\cdot\frac{ (-\ft^5)^\ell}{ (5\ft)^{\ell}}\cdot\prod_{i=1}^\ell\left(\frac{1}{5\ft- \ti\psi_i}\cdot \frac{1}{e_T(N_{\cW_{ \Gamma_1}/\cW}^{vir})}\right)
\cdot\left(\frac{1}{1-\lambda}c \big(R\pi_{\cW_{\ga}\ast}(\cL_{\cW_{\ga}} \cN_{\cW_{\ga}} \cL_\ft)\big)\right)_{d-1},\nonumber
\end{eqnarray}
where $\barM_{1,\ell}^{1/5,5p}$ is the moduli  of   $5$-spin curves of genus $1$ with five $P$-fields and 
 $(\cdots)_{d-1}$ is the degree $d-1$ component. 

Using the variable $\ft'$, we can rewrite  $\left(\displaystyle\frac{1}{1-\lambda}c \big(R\pi_{\cW_{\ga}\ast}(\cL_{\cW_{\ga}} \cN_{\cW_{\ga}} \cL_\ft)\big)\right)_{d-1}$ as

$$({\rm Coe}_{\ft'}+\lambda{\rm Coe}_{\ft^{\prime 2}})
\prod_{i=1}^\ell e\big(R\pi_{\cW_{\ga_i}\ast}(\cL_{\cW_{\ga_i}} \cL_\ft\cL_{\ft'})\big).$$ \black

Let $h_\ell\colon \barM_{1,1^\ell}^{1/5, 5p}\to \barM_{1,1^{\ell-1}}^{1/5, 5p}$ be the contracting map forgetting the last marking point.  Let $D_i$ be the boundary divisor of  $\barM_{1,1^\ell}^{1/5, 5p}$ which is the graph of the section of $h_\ell$ induced by $\ell$-th marking. For $i\le \ell-1$, we have the identity $\psi_i^k=h_\ell\sta\psi_i^k+(D_i/5)h_\ell\sta\psi^{k-1}_i$. This implies
\begin{eqnarray*}
&&\frac{1}{\ft+\psi_i}
=\frac{1}{\ft}\Big(1-\frac{h_\ell^*\psi_i+D_i/5}{\ft}+\frac{h_\ell^*(\psi_i)^2+h_\ell^*\psi_i\cdot D_i/5}{\ft^2}+\ldots\Big)
= \frac{1-D_i/5\ft}{\ft+h_\ell^*\psi_i}.
\end{eqnarray*}
 We denote $ {e_T(R\pi_{v*}ev^*\cL^\vee\cL_\ft)}^{-1}=\ee_{\ell}$. It is direct to check $\ee_\ell=(-\ft-\psi_\ell)h_\ell\sta \ee_{\ell-1}$. Therefore
\begin{eqnarray*}
&&\int_{\barM_{1,1^\ell}^{1/5, 5p}}\frac{1}{-\ft-\psi_1}\cdots\frac{1}{-\ft-\psi_\ell}
\ee_\ell 
=\int_{\barM_{1,1^\ell}^{1/5, 5p}}\frac{1-D_1/5\ft}{-\ft-h_\ell^*\psi_1}\cdots\frac{1-D_{\ell-1}/5\ft}{-\ft-h_\ell^*\psi_{\ell-1}}
 h_\ell\sta\ell_{\ell-1}\\
&=&-\frac{\ell-1}{5\ft}\int_{\barM_{1,1^{\ell-1}}^{1/5, 5p}}\frac{1}{-\ft-\psi_1}\cdots\frac{1}{-\ft-\psi_{\ell-1}}
\ee_{\ell-1}
=(-1)^{\ell-1}\frac{(\ell-1)!}{(5\ft)^{\ell-1}}\int_{\barM_{1,1}^{1/5, 5p}}\frac{1}{-\ft-\psi_1}\ee_1\\
&=&(-1)^{\ell}\frac{(\ell-1)!}{(5\ft)^{\ell}}\cdot \frac{128}{3}.
\end{eqnarray*}
Here we used the fact that $D_i\cdot D_j=0$ if $i\neq j$ and a reduction Lemma about FJRW invariants.

 Using $\lambda=5\psi_1$ and a computation of a FJRW invariant, we obtain similarly \begin{eqnarray*}
\int_{\barM_{1,1^\ell}^{1/5, 5p}}\frac{1}{-\ft-\psi_1}\cdots\frac{1}{-\ft-\psi_\ell}
\ee_\ell
=(-1)^{\ell-1}\frac{(\ell-1)!}{(5\ft)^{\ell-1}}\int_{\barM_{1,1}^{1/5, 5p}}\frac{-1}{\ft}\Big(1-\frac{\psi_1}{\ft}\Big)
 5\psi_1\ee_1=(-1)^{\ell}\frac{(\ell-1)!}{(5\ft)^{\ell-1}}\cdot \frac{25}{3}.
\end{eqnarray*}


Using \eqref{4coe}, the  type D contribution \eqref{integrand}   is
 \begin{eqnarray*}
&&\sum_{\ell=1}\sum_{d_i>0, 1\le i\le \ell}\sum_{(\ga_i) \in \hat\Xi^1_{d_i}  } (- \sQ)^{d_1+\ldots +d_\ell}\frac{\ft}{|\Aut(\{\Gamma_1, \ldots, \Gamma_\ell\})|}\cdot  \frac{(-\ft^5)^\ell}{ (5\ft)^{\ell}}\bigg((-1)^{\ell}\frac{(\ell-1)!}{(5\ft)^{\ell}}\cdot \frac{128}{3}{\rm Coe}_{\ft'}\nonumber\\
&&+(-1)^{\ell}\frac{(\ell-1)!}{(5\ft)^{\ell-1}}\cdot \frac{25}{3}{\rm Coe}_{\ft^{\prime2}}\bigg)
\prod_{i=1}^\ell\Big(\int_{ [\cW_{(\Gamma_i)}]^{vir}}\frac{1}{5\ft- \ti\psi_i}\frac{e(R\pi_{\Gamma_i*}\cL_{\Gamma_i}(-D_{\Gamma_i})\otimes \cL_\ft\cL_{\ft'})}{ e_T(N_{\cW_{ \Gamma_i}/\cW}^{vir})}\Big)\\
&=&\ft\cdot \frac{128}{3}\sum_{\ell=1}^\infty\frac{1}{\ell}\Big(\sum_{d=1}^\infty\sum_{\Gamma\in \Sigma_d^1 }(- \sQ)^{d}
\cdot \frac{\ft^4}{5}\cdot\int_{ [\cW_{(\Gamma)}]^{vir}}\frac{1}{5\ft\cdot (5\ft- \ti\psi)}\frac{e(R\pi_{\ga\ast}(\cL_\ga \cL_{\ft} \cL_{\ft'}(-D_\ga)))}{ e_T(N_{\cW_{ \Gamma}/\cW}^{vir})}\Big)^\ell|_{\ft'}\\
 &&+ \frac{125}{3}\ft^2
\cdot\sum_{\ell=1}^\infty\frac{1}{\ell} \Big(\sum_{d=1}^\infty\sum_{\Gamma\in \Sigma_d^1 }(- \sQ)^{d}
\frac{\ft^4}{5}\cdot\int_{ [\cW_{(\Gamma)}]^{vir}}\frac{1}{5\ft\cdot (5\ft- \ti \psi)}\frac{e(R\pi_{\ga\ast}(\cL_\ga \cL_{\ft} \cL_{\ft'}(-D_\ga)))}{ e_T(N_{\cW_{ \Gamma}/\cW}^{vir})}\Big)^\ell
|_{\ft^{\prime2}}\\
&=&-\ft\cdot \frac{128}{3} {\rm Coe}_{\ft'}\ln(1+\frac{\ft^4}{5}\tilde\sZ^*_{1, 5}(5\ft))-\frac{125}{3}\ft^2 {\rm Coe}_{\ft^{\prime 2}}\ln(1+\frac{\ft^4}{5}\tilde \sZ^*_{1,5}(5\ft))\\
&=&-\frac{128}{3}\ft\left(-\frac{g_1}{5\ft I_0}\right)=\frac{128}{3}\cdot\frac{g_1}{5I_0}.
\end{eqnarray*}

\section{A+B+C+D}

  Let's recall the results from previous sections about the contributions of graph type A, B, C, and D. 
 
 \begin{eqnarray*}\text{Graphs A:  } &&  \sF_1'  -\frac{25}{12}\cdot  (T'-1)
 \ +  \frac{40} {24 } \cdot 5  \frac{I_0' }{I_0}    -\frac{25}{3}\cdot\frac{g_1}{I_0}; \\
 \text{Graphs B:  } &&2 \frac{I_0'}{I_0}          + \frac{ \ln(1-5^5q)}{5} + \frac{1}{2}\frac{T''(t)}{T'(t)};\\
\text{Graphs C:  } && - \frac{1}{5}\frac{  g_1}{  I_0} - \frac{1}{5} \ln(1-5^5e^t) -\frac{1}{12}  \cdot \frac{5^5e^t}{1-5^5e^t};\\ \text{Graphs D:  } && \frac{128}{3} \frac{  g_1}{5 I_0} .  
    \end{eqnarray*}
    Here $\displaystyle{\frac{d}{dt}}$ is replaced by ${}^\prime$.
    
    Taking the summation of the four contributions above, we get  
    $$ \sF_1' -\frac{25}{12}\cdot  (T'-1)
  +  \frac{31} {3 }  \frac{I_0' }{I_0}   +
\frac{1}{2}\frac{T''(t)}{T'(t)}
 -\frac{1}{12}  \cdot \frac{5^5e^t}{1-5^5e^t}=0.  $$
 
  This implies the formula for $\sF_1=\sum_{d=1}^\infty N_{1,d}e^{Td}$ as follows:
  \beq\label{Zinger} \sF_1 =\frac{25}{12}\cdot  (T-t)
- \ln\Big(   {I_0 }^{\frac{31}{3}}  \cdot
{T'(t)}^{\frac{1}{2}}
 \cdot{(1-5^5e^t)}^{ \frac{1}{12} }\Big) \eeq
since both side of \eqref{Zinger} are power series in $q$ with no $q^0$ term. It   is exactly what A. Zinger obtained in \cite{Zi2}. \black

\black

\begin{appendices}
 \section{Appendix: Regularity and some identities}
 \label{secregularityandsomeidentities}

\subsection{Regularity} We prove all the results involving regularities of series here.

Recall the definition of regularizable power series introduced by Zinger in \cite{Zi2}. Let
$\mathbb Q_\ft=\mathbb Q[\ft, \ft^{-1}, \ft']$. 
\begin{defi}  A power series $\cZ^*=\cZ^*(\hbar, q)\in \mathbb Q_{\ft}(\hbar)[[q]]$ is regularizable at $\hbar=0$ if there exist power series 
$$\eta=\eta(q)\in \mathbb Q_\ft[[q]] \quad {\rm and}\quad \bar\cZ^*=\bar\cZ^*(\hbar, q)\in \mathbb Q_{\ft}(\hbar)[[q]]$$ with no degree-zero term in $q$ such that $\bar\cZ^*$ is regular at $\hbar=0$ and 
$$1+\cZ^*(\hbar, q)=e^{\eta(q)/\hbar}\big(1+\bar\cZ^*(\hbar, q)\big).$$
\end{defi}    

If $\cZ^*$ is regularizable at $\hbar =0$, then $\cZ^*$ has no degree-zero term in $u$ and the regularizing pair ($\eta, \bar\cZ^*$) is unique. It is determined by 
$
\eta(u)={\rm Res}_{\hbar=0}{\ln\big(1+\cZ^*(\hbar, u)\big) }.
$

\begin{lemm}
$\displaystyle\frac{\ti \sZ_{0,1}(\ft, -\ft)}{5}\in  \mathbb Q_{\ft}(\hbar)[[q]]$ is regularizable. 
\end{lemm}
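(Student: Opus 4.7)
The strategy is to read off a regularizing pair directly from the closed-form expression for $\ti\sZ_{0,1}(\ft,-\ft)$ established in \eqref{ghost0}, which rewrites
$$1 + \frac{\ti\sZ_{0,1}(\ft,-\ft)}{5} \;=\; \frac{1}{I_0(q)}\,\exp\!\Big(\frac{\ft\,g_1(q)}{I_0(q)\,\hbar}\Big) \cdot \sum_{d=0}^\infty q^d\,\frac{(5d)!\,\hbar^{d}}{(d!)^4\,\prod_{m=1}^d (\ft+m\hbar)}.$$
All the $\hbar^{-1}$-singular behavior is gathered into the exponential, so I would set
$$\eta(q) \;:=\; \frac{\ft\,g_1(q)}{I_0(q)}, \qquad 1+\bar\sZ^*(\hbar,q) \;:=\; \frac{1}{I_0(q)}\sum_{d=0}^\infty q^d\,\frac{(5d)!\,\hbar^{d}}{(d!)^4\,\prod_{m=1}^d (\ft+m\hbar)},$$
so that $1+\ti\sZ_{0,1}(\ft,-\ft)/5 = e^{\eta/\hbar}\bigl(1+\bar\sZ^*\bigr)$ by inspection.

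It then suffices to verify the three properties demanded by the definition. First, $\eta(q) \in \mathbb Q_\ft[[q]]$: the series $g_1(q)$ starts at order $q$ and $I_0(q)\in 1 + q\mathbb Q[[q]]$, so $\eta$ lies in $\ft\cdot q\,\mathbb Q[[q]]$. Second, $\bar\sZ^*$ has no constant term in $q$: only $d=0$ contributes at $q=0$, producing $1$, which cancels the prefactor $1/I_0(0)=1$. Third, $\bar\sZ^*$ is regular at $\hbar=0$: for each $d\ge 1$, the denominator $\prod_{m=1}^d(\ft+m\hbar)$ has nonvanishing constant term $\ft^d$, so $\prod_{m=1}^d(\ft+m\hbar)^{-1}$ expands as a power series in $\hbar$ over $\mathbb Q[\ft^{-1}]$, and together with the factor $\hbar^d$ in the numerator each summand is of order $\hbar^d$ in $\hbar$. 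Thus the full sum defines an element of $\mathbb Q[\ft^{\pm 1}][[q,\hbar]]\subset \mathbb Q_\ft(\hbar)[[q]]$, automatically regular at $\hbar=0$.

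There is no real obstacle: formula \eqref{ghost0} already exhibits the singular and regular pieces in exactly the required separated form, and Zinger's uniqueness of the regularizing pair then pins down $\eta$ as $\mathrm{Res}_{\hbar=0}\ln\!\bigl(1+\ti\sZ_{0,1}(\ft,-\ft)/5\bigr)$, providing a useful sanity check for later computations.
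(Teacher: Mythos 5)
Your proposal is correct and coincides with the paper's own proof: both read the regularizing pair $\bigl(\eta,\bar\sZ^*\bigr)=\bigl(\ft g_1/I_0,\ \overline\sZ_{0,1}\bigr)$ directly off formula \eqref{ghost0} and verify the vanishing of the $q$-constant terms together with regularity at $\hbar=0$. Your extra remark that each $d\ge 1$ summand is in fact divisible by $\hbar^d$ (since $\prod_{m=1}^d(\ft+m\hbar)$ is a unit at $\hbar=0$) is a slightly more explicit justification of the regularity than the paper gives, but the argument is the same.
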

\begin{proof} From \eqref{ghost0}, we have 
\begin{eqnarray*}
1+\frac{\ti \sZ_{0,1}(\ft, -\ft)}{5}=e^{\frac{g_1\ft}{I_0\hbar}}\cdot \frac{1}{I_0}\cdot\sum_{d=0}^\infty q^d\frac{\hbar^d(5d)!}{(d!)^4\prod\limits_{m=1}^d(\ft+m\hbar)}:= e^{\frac{g_1\ft}{I_0\hbar}} (1+\overline \sZ_{0,1}).
\end{eqnarray*}
 Then $\displaystyle\frac{g_1 \ft}{I_0}$ and $\overline \sZ_{0,1}$
 has no degree-zero terms in $q$, and $\overline \sZ_{0,1}$ is regular at $\hbar=0$.
\end{proof}

\begin{lemm}\label{Z6reg} $\cZ_6\sta:=\frac{\ft^4}{5}\ti\sZ_{1,5}(\hbar,\sQ)\in \QQ(\ft,\ft')(\hbar)[[\sQ]]$ is regularizable at $\hbar =0$ \end{lemm}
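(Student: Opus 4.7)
The plan is to exploit the closed-form expression for $\ti\sZ_{1,5}$ given in~\eqref{Z1,5}. Substituting it into the definition $\cZ_6^* = (\ft^4/5)\ti\sZ_{1,5}$ gives
\[
1 + \cZ_6^*(\hbar, \sQ) \;=\; \frac{1}{I_0}\exp\!\Big(\tfrac{\ft(T-t) - \ft'g_1/I_0}{\hbar}\Big)\cdot \Sigma(q, \hbar),
\]
with $\sQ = e^T$ and
\[
\Sigma(q, \hbar) \;:=\; \sum_{d \geq 0} q^d\, \frac{\prod_{m=1}^{5d}(-5\ft+m\hbar)\prod_{m=1}^d(\ft'+m\hbar)}{d!\, \hbar^d \prod_{m=1}^d(-\ft+m\hbar)^5}.
\]
A direct identification shows that, up to a prefactor, the series $\Sigma$ coincides with the restriction $I|_{p=-\ft}$ of Givental's equivariant $I$-function for $\PP^5$ to the torus-fixed point $p = -\ft$. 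Via Corollary~\ref{closer} (the equivariant mirror theorem under $\sQ = e^T$), one then rewrites the whole expression cleanly as $\tilde J|_{p=-\ft}/(-5\ft\ft')$, where $\tilde J := J \cdot e^{-(T_0 + p\log\sQ)/\hbar}$.

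The key step is to establish the asymptotic expansion at $\hbar = 0$: there exist unique $\mu(q) \in \QQ[\ft,\ft^{-1},\ft'][[q]]$ with $\mu(q)|_{q=0} = 0$ and regular coefficients $\mu_k(q)$ such that
\[
\Sigma(q, \hbar) \;=\; \exp\!\big(\tfrac{\mu(q)}{\hbar}\big)\Big(1 + \sum_{k\geq 1}\mu_k(q)\,\hbar^k\Big).
\]
This is a classical statement coming from Givental's R-matrix/Birkhoff factorization of the equivariant $J$-function at a torus-fixed point; it can also be proved directly by induction on the order in $q$ using the hypergeometric Picard--Fuchs recursion satisfied by $\Sigma$, so as to show that all $1/\hbar^j$ principal parts with $j \geq 2$ cancel from $\log \Sigma$. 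A direct expansion yields the leading order $\mu(q) = 5^5\ft'\, q + O(q^2)$.

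Granted this, the regularizing pair is read off:
\[
\eta(\sQ) \;:=\; \ft(T-t) - \ft'g_1/I_0 + \mu(q), \qquad 1 + \bar\cZ_6^*(\hbar, \sQ) \;:=\; \frac{1}{I_0}\Big(1 + \sum_{k\geq 1}\mu_k(q)\,\hbar^k\Big),
\]
with $q = q(\sQ)$ the inverse of the mirror map. The series $\bar\cZ_6^*$ is manifestly regular at $\hbar = 0$, and both $\eta$ and $\bar\cZ_6^*$ vanish at $\sQ = 0$, since $T - t$, $g_1$, and $\mu$ all vanish at $q = 0$ while $I_0(0) = 1$; membership in the appropriate rings is then straightforward. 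The main obstacle is the asymptotic expansion of $\Sigma$ itself: the existence of the higher coefficients $\mu_k$ is automatic once the principal part of $\log \Sigma$ is known to be linear in $1/\hbar$, so the nontrivial content lies precisely in the vanishing of all higher-order pole contributions $\hbar^{-j}$ ($j \geq 2$) in $\log \Sigma$, which one handles either by appealing to Givental's general theorem or by the explicit hypergeometric induction.
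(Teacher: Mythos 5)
Your proposal is correct and is essentially the paper's argument: the paper's entire proof is the citation ``The argument in \cite[lemm 2.3]{Zi2} applies here,'' and Zinger's Lemma 2.3 is precisely the statement that a hypergeometric series of the form of your $\Sigma$ (equivalently, the restriction of the equivariant $I$-function to the fixed point $p=-\ft$, which the paper has already extracted in \eqref{Z1,5}) admits an expansion $e^{\mu/\hbar}\times(\text{regular at }\hbar=0)$; your two suggested routes (Givental's factorization or induction on the hypergeometric recursion) are how that lemma is proved. One small correction: the regular factor of $\Sigma$ does not have constant term $1$ in $\hbar$ --- at $\ft'=0$ one computes $\Sigma|_{\hbar=0}=\tfrac{1}{1-5^5q}$ and $\mu|_{\ft'=0}=0$ --- so your expansion must read $\Sigma=e^{\mu/\hbar}\bigl(1+\sum_{k\geq 0}\mu_k\hbar^k\bigr)$ with each $\mu_k$ vanishing at $q=0$, rather than starting the sum at $k=1$; since the definition of regularizability only requires $\bar\cZ_6\sta$ to be regular at $\hbar=0$ and to have no $q^0$ term, this does not affect the conclusion.
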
 \begin{proof} The argument in \cite[lemm 2.3]{Zi2} applies here. 
\end{proof}
  
 Thus we can express 
     \beq\label{reg}1+\sZ_6\sta(\hbar)=e^{\eta/\hbar}(1+\bar\sZ_6\sta(\hbar)),\qquad \quad
    \eta={\rm Res}_{\hbar=0}\ln(1+\sZ_6\sta(\hbar)), \eeq
    where $\bar\sZ_6\sta\in \QQ(\ft,\ft')(\hbar)[[\sQ]]\sQ$ is regular at $\hbar=0.$

  Mimicking \cite[Lemm 2.2(i)(ii)]{Zi2}, one can show
\begin{lemm}
If $(\eta, \overline{Z}^*)$ is the regularizing pair for $Z^*$ at $\hbar=0$, then for every $a\ge 0$, 
\begin{eqnarray}\label{formula1}
\sum_{m=1}^\infty \frac{1}{m}\sum\limits_{\sum\limits_{\ell=1}^m a_\ell=m-1-a\atop a_\ell\ge 0}
 \Big(\prod_{\ell=1}^m \frac{(-1)^{a_\ell}}{a_\ell !} {\rm Res}_{\hbar=0}\{\hbar^{-a_\ell}Z^*(\hbar, q)\}\Big)=
\frac{\eta^{a+1}}{a+1}.\end{eqnarray}
\end{lemm}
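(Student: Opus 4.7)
The plan is to recognize the left-hand side of \eqref{formula1} as the output of a Lagrange inversion and then identify the resulting fixed point with $\eta$ via the defining regularizing identity.

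Set $R_k := \mathrm{Res}_{\hbar=0}\{\hbar^{-k} Z^*(\hbar,q)\}$, which lies in $q\QQ_\ft[[q]]$ because $Z^*$ has no degree-zero term in $q$, and package these residues into
\[
\phi(y) := \sum_{k\geq 0}\frac{(-y)^k}{k!}R_k \ \in\ \QQ_\ft[[q]][[y]].
\]
A multinomial expansion identifies the LHS of \eqref{formula1} with $\sum_{m\geq 1}\frac{1}{m}[y^{m-1-a}]\phi(y)^m$, with the convention that the summand vanishes when $m-1-a<0$.

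Next I would invoke the formal Lagrange inversion theorem. Let $x(y)\in y\QQ_\ft[[q]][[y]]$ be the unique formal solution of $x=y\phi(x)$; applying Lagrange inversion with test function $H(x)=x^{a+1}/(a+1)$ produces
\[
[y^n]\,\frac{x(y)^{a+1}}{a+1}=\frac{1}{n}[x^{n-1-a}]\phi(x)^n.
\]
A short induction shows that each $[y^n]x(y)$ is a homogeneous polynomial of total degree $n$ in the $R_k$'s, and hence lies in $q^n\QQ_\ft[[q]]$. This order-of-vanishing bound legitimizes the substitution $y=1$ in the $q$-adic topology, yielding a well-defined element $x(1)\in q\QQ_\ft[[q]]$ with
\[
\frac{x(1)^{a+1}}{a+1}=\sum_{n\geq 1}\frac{1}{n}[x^{n-1-a}]\phi(x)^n=\text{LHS of \eqref{formula1}}.
\]

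To conclude I would show $x(1)=\eta$. Both candidates lie in $q\QQ_\ft[[q]]$ and satisfy the fixed-point equation $z=\phi(z)$, while $\phi$ is a strict $q$-adic contraction on $q\QQ_\ft[[q]]$: the estimate $z\equiv z'\pmod{q^N}\Rightarrow \phi(z)-\phi(z')\in q^{N+1}\QQ_\ft[[q]]$ follows directly from $R_k\in q\QQ_\ft[[q]]$, so the fixed point is unique. The key residue computation is
\[
\phi(\eta)=\mathrm{Res}_{\hbar=0}\{e^{-\eta/\hbar}Z^*(\hbar,q)\},
\]
obtained by interchanging the $k$-sum with the residue. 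Using $1+Z^*=e^{\eta/\hbar}(1+\overline{Z}^*)$, I rewrite $e^{-\eta/\hbar}Z^*=(1+\overline{Z}^*)-e^{-\eta/\hbar}$: the regularity of $\overline{Z}^*$ at $\hbar=0$ annihilates the first residue, while $\mathrm{Res}_{\hbar=0}\{e^{-\eta/\hbar}\}=-\eta$ follows from the explicit Laurent expansion, so $\phi(\eta)=\eta$ and the lemma follows. The main technical hurdle is the justification of the $y=1$ substitution, which is precisely what the polynomial degree bound $[y^n]x(y)\in q^n\QQ_\ft[[q]]$ makes rigorous; the remainder is standard Lagrange inversion and a one-line residue computation.
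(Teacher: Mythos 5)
Your proof is correct, and it is genuinely different from what the paper does: the paper gives no argument at all for this lemma, deferring to ``mimicking Zinger's Lemma 2.2,'' and the Zinger-style route (which the paper does carry out explicitly for the closely related quantity $\mathbf{X}$ in Appendix C.2) proceeds by substituting the expansion $\mathrm{Res}_{\hbar=0}\{\hbar^{-a}Z^*\}=\sum_{p-m=1+a}\tfrac{\eta^p}{p!}C_m+\tfrac{\eta^{a+1}}{(a+1)!}$ (with $\overline{Z}^*=\sum_m C_m\hbar^m$) and then killing all terms involving the $C_m$ by a multinomial/binomial cancellation. Your Lagrange-inversion packaging replaces that bookkeeping by three clean steps: (i) the multinomial identification of the left side with $\sum_m\frac1m[y^{m-1-a}]\phi(y)^m$, (ii) Lagrange--B\"urmann to recognize this as $x(1)^{a+1}/(a+1)$ for the fixed point $x=y\phi(x)$, and (iii) the one-line residue computation $\phi(\eta)=\mathrm{Res}_{\hbar=0}\{(1+\overline{Z}^*)-e^{-\eta/\hbar}\}=\eta$ together with $q$-adic uniqueness of the fixed point. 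All the delicate points are handled: the homogeneity bound $[y^n]x(y)\in q^n\QQ[\ft,\ft^{-1},\ft'][[q]]$ does justify $y=1$, the interchange of the $k$-sum with $\mathrm{Res}_{\hbar=0}$ is legitimate because only finitely many $k$ contribute to each $q$-degree, and the contraction estimate gives uniqueness. What your route buys is a conceptual explanation of why the answer is exactly $\eta^{a+1}/(a+1)$ for all $a$ simultaneously (it is the Lagrange test function $H(x)=x^{a+1}/(a+1)$ evaluated at the unique fixed point of $\phi$), at the cost of invoking the formal Lagrange inversion theorem over the coefficient ring $\QQ[\ft,\ft^{-1},\ft'][[q]]$ with $\phi(0)$ a non-unit, which holds by the usual polynomiality argument but deserves the one-sentence justification you only implicitly give.
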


 \subsection{Some identities $I$}
   Using methods in  Zinger's \cite[proof of (2.4)]{Zi2}, we want to calculate   \begin{eqnarray*}
  \bf{X}&:=&  \sum_{m=1}^\infty  \frac{1}{m!}  (\frac{\ft^4}{5})^m    \int_{\barM_{1,m}}      \hat\sP(\psi_1)\hat\sP(\psi_2)\cdots\hat\sP(\psi_m)  \\
&=&   \sum_{m=1}^\infty  \frac{1}{m!} \black  \sum_{\vec{a}=(a_1,\cdots,a_m)}    \vec{a} !   \int_{\barM_{1,m}}\psi_1^{a_1}\cdots\psi_m^{a_m}    \prod_{\ell=1}^m \frac{(-1)^{a_\ell} }{a_\ell !}
    {\rm Res}_{\hbar=0} \{ \hbar^{-a_\ell}  \sZ_6\sta(\hbar,q)  \}.
    \end{eqnarray*}

     Writing   $\bar\cZ_6\sta=\sum_{m=0}^\infty C_m\hbar^m$,  one has
      $$  {\rm Res}_{\hbar=0}\hbar^a\cZ_6\sta=\sum_{\substack{p-m=1+a\\ p,m\geq 0}}
    \frac{\eta^p}{p!} C_m +
     \begin{cases}
   \displaystyle \frac{\eta^{a+1}}{(a+1)!} & a\geq 0,\\
   0 & \text{otherwise}.
           \end{cases}  $$

    For fixed $k\geq 0$, let  $\alpha=(\alpha_1,\cdots,\alpha_k)\in \NN^k$, where
     $N=\alpha_1+\cdots+\alpha_k:=|\alpha|$. Pick     a subset of $\ZZ_{\geq 0}   $
    \begin{eqnarray}\label{fixed2}
     \quad \quad m=\{m_1=\cdots=m_{\alpha_1}, m_{\alpha_1+1}=\cdots=m_{\alpha_1+\alpha_2},\cdots, m_{\alpha_1+\cdots+\alpha_{k-1}+1}=\cdots=m_{N}\} 
     \end{eqnarray}
     be a sequence of  N  distinct numbers. We look at the coefficient of $C_m^\alpha:=C_{m_1}^{\alpha_1}C_{m_{\alpha_1+\alpha_2}}^{\alpha_2}\cdots C_{m_N}^{\alpha_N}$ in ${\bf X}$.  For each $\beta:\{1,\cdots,N\}\to \ZZ_{\geq 0}$ and a choice of  $\alpha$ above, the term $C_m^\alpha$ appears in the $n=|\beta|:=\beta_1+\cdots+\beta_N$ summand in ${\bf X}$ with the coefficient 
   \begin{align*}
   & \eta^{n-N} \prod_\ell (\beta_\ell !) \int_{\barM_{1,|\beta|}}(\prod_{s=1}^N\psi_s^{\beta_s})
   \prod_{\ell=1}^N \frac{(-1)^{\beta_\ell}}{\beta_\ell !}\frac{\eta^{m_\ell+1-\beta_\ell}}{(m_\ell+1-\beta_\ell)!}\\
   =& \frac{\eta^{\alpha\cdot m}}{ (m+1)!^\alpha } \prod_\ell (\beta_\ell !) \int_{\barM_{1,|\beta|}}
   (\prod_{s=1}^N\psi_s^{\beta_s})
   \prod_{\ell=1}^N  (-1)^{\beta_\ell} \binom{m_\ell+1}{\beta_\ell},
\end{align*}
 where $\alpha\cdot m:=\sum_{\ell}m_\ell$ and $(m+1)!^\alpha:=\prod\limits_{\ell=1}^N (m_\ell+1)!$.  The number of such choices of $\alpha$ is  $\displaystyle\binom{|\beta|}{\alpha,|\beta|-\alpha}$.

   Set $b=|\beta|$ and sum over it. We then have
    \begin{eqnarray*} 
    {\rm Coe}_{C^\alpha_m}{\bf X}&=&
  \frac{\eta^{\alpha\cdot m}}{(m+1)!^\alpha}\sum_{b=0}^\infty \frac{(-1)^b}{b!} \binom{b}{\alpha,b-N} \sum_{\substack{\beta_1+\cdots+\beta_N=b\\ \beta_\ell\geq 0}} \prod_\ell (\beta_\ell!)
    \int_{\barM_{1,b}}\psi_1^{\beta_1}\cdots \psi_N^{\beta_N} \prod_{\ell=1}^N \binom{m_\ell+1}{\beta_\ell}\\
&=&    \frac{\eta^{\alpha\cdot m}}{(m+1)!^\alpha} \frac{N!}{\alpha!}\sum_{b=N}^\infty \frac{(-1)^b}{b!} \binom{b}{N}  \Lambda_b(m_1+1,\cdots,m_N+1)\nonumber,
 \end{eqnarray*}
where  for $b\geq N$, $r_1,\cdots,r_N\geq 0$,     we set 
      \begin{eqnarray*}
      \Lambda_b(r_1,\cdots,r_N) &:=& \sum_{\substack{\beta_1+\cdots+\beta_N=b \\  r_\ell\geq \beta_\ell\geq 0 }}
    \int_{\barM_{1,b}}\psi_1^{\beta_1}\cdots \psi_N^{\beta_N} \prod_{\ell=1}^N \frac{r_\ell!}{(r_\ell-\beta_\ell)!}.
\end{eqnarray*}

\begin{lemm} Set $\Lambda_b(r_1,\cdots,r_N):=0$ if some $r_i<0$.  Then  $\forall b\geq N$, the series $\Lambda_{b+1}(r_1,\cdots,r_N)$ is equal to 
\begin{eqnarray*} 
&&r_1\Lambda_{b}(r_1-1,r_2,\cdots,r_N)+r_2\Lambda_b(r_1,r_2-1,r_3,\cdots,r_N)+\cdots  +r_N \Lambda_b(r_1,r_2,\cdots,r_{N-1},r_N-1).
\end{eqnarray*}
\end{lemm}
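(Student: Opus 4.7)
The plan is to apply the string equation to the forgetful morphism $\pi \colon \barM_{1,b+1} \to \barM_{1,b}$ that drops the $(b+1)$-st marked point. In the integrand defining $\Lambda_{b+1}(r_1,\ldots,r_N)$ only the $\psi$-classes $\psi_1,\ldots,\psi_N$ appear, so the markings indexed $N+1,\ldots,b+1$ all carry $\psi$-power zero. Hence for each multi-index $\beta=(\beta_1,\ldots,\beta_N)$ with $|\beta|=b+1$ and $0\le\beta_\ell\le r_\ell$, the string equation gives
$$
\int_{\barM_{1,b+1}} \psi_1^{\beta_1}\cdots\psi_N^{\beta_N}
= \sum_{i=1}^{N} \int_{\barM_{1,b}} \psi_1^{\beta_1}\cdots\psi_i^{\beta_i-1}\cdots\psi_N^{\beta_N},
$$
where by convention the $i$-th summand vanishes when $\beta_i=0$.

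Next I would substitute this into the definition of $\Lambda_{b+1}$ and, for each fixed $i\in\{1,\ldots,N\}$, reindex the inner sum by setting $\gamma_i=\beta_i-1$ and $\gamma_\ell=\beta_\ell$ for $\ell\ne i$. Under this change of variables the constraint $|\beta|=b+1$ becomes $|\gamma|=b$; the range $1\le\beta_i\le r_i$ (the lower bound coming from the string equation) becomes $0\le\gamma_i\le r_i-1$; and the factor $\tfrac{r_i!}{(r_i-\beta_i)!}$ rewrites as $r_i\cdot\tfrac{(r_i-1)!}{(r_i-1-\gamma_i)!}$. Factoring out $r_i$ exhibits the $i$-th summand as exactly $r_i\,\Lambda_b(r_1,\ldots,r_i-1,\ldots,r_N)$. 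Summing over $i$ produces the claimed recursion.

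The only point requiring care is matching the boundary convention $\Lambda_b(\ldots,-1,\ldots)=0$ with the vanishing extracted from the string equation. When $r_i=0$, the constraint $0\le\beta_i\le r_i=0$ forces $\beta_i=0$, so the $i$-th term of the string equation contributes nothing; on the right-hand side the corresponding summand is $0\cdot\Lambda_b(\ldots,-1,\ldots)$, which is also zero under the stated convention. Thus the recursion is consistent across all boundary cases. The step likely to require the most attention is bookkeeping the index shift together with the combinatorial factors $r_\ell!/(r_\ell-\beta_\ell)!$; once those are aligned, the identity follows directly from the string equation without any further input.
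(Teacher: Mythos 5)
Your proof is correct and is exactly the argument the paper has in mind: the paper's justification is the single sentence that the identity ``is a simple consequence of the string equation,'' and your application of the string equation to the forgetful map $\barM_{1,b+1}\to\barM_{1,b}$, followed by the reindexing $\gamma_i=\beta_i-1$ and the factorization $\frac{r_i!}{(r_i-\beta_i)!}=r_i\cdot\frac{(r_i-1)!}{(r_i-1-\gamma_i)!}$, fills in precisely that computation. The handling of the boundary cases ($\beta_i=0$ terms dropped by the string equation matching the convention $\Lambda_b(\ldots,-1,\ldots)=0$) is also correct.
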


    The identity is a simple consequence of the string equation. Using the Lemma, we obtain
      \begin{eqnarray*} &&  \frac{N!}{\alpha!}\sum_{b=N}^\infty \frac{(-1)^b}{b!} \binom{b}{N}  \Lambda_b(r_1,\cdots,r_N)
    = \frac{1}{\alpha!}\sum_{b=N}^\infty  \frac{(-1)^b}{(b-N)!} \Lambda_b(r_1,\cdots,r_N)\\
    &=& \frac{(-1)^N}{\alpha!}\sum_{b=0}^\infty  \frac{(-1)^b}{b!} \Lambda_{b+N}(r_1,\cdots,r_N)\\
&=& \frac{(-1)^N}{\alpha!}\sum_{0\leq a_i\leq r_i}  \frac{(-1)^{|r|-|a|}}{(|r|-|a|)!}    \binom{|r|-|a|}{r_1-a_1,\cdots,r_N-a_N}    \frac{r_1!}{a_1!}\cdots\frac{r_N!}{a_N!}      \Lambda_{N}(a_1,\cdots,a_N)    \\
 &=& \frac{(-1)^N}{\alpha!}   \sum_{\substack{\beta_1+\cdots+\beta_N=N \\ r_\ell\geq \beta_\ell\geq 0 }}    \sum_{\beta_i \leq a_i\leq r_i}   \bigg((-1)^{|r|-|a|}    \frac{1}{(r_1-a_1)!\cdots (r_N-a_N)!}  \\
 &&\cdot  \frac{r_1!}{(a_1-\beta_1)!}\cdots\frac{r_N!}{(a_N-\beta_N)!} \cdot \int_{\barM_{1,N}}\psi_1^{\beta_1}\cdots \psi_N^{\beta_N} \bigg)   \\
     &=& \frac{(-1)^N}{\alpha!}   \sum_{\substack{\beta_1+\cdots+\beta_N=N \\ r_\ell\geq \beta_\ell\geq 0 }}
     (-1)^{|\beta|-|r|}   \prod_{i=1}^N \frac{r_i!}{(r_i-\beta_i)!}  \sum_{\beta_i \leq a_i\leq r_i}   (-1)^{|a|-|\beta|}   \prod_{i=1}^N \binom{r_i-\beta_i}{a_i-\beta_i}  \int_{\barM_{1,N}}\psi_1^{\beta_1}\cdots \psi_N^{\beta_N} \\
        &=&\begin{cases}
        0  &  \text{if }    r_1+r_2+\cdots+r_N \neq N, \\
          \displaystyle\frac{(-1)^N}{\alpha!}  r_1!\cdots r_N! \black \int_{\barM_{1,N}}\psi_1^{r_1}\cdots \psi_N^{r_N}  &\text{if }    r_1+r_2+\cdots+r_N = N,
         \end{cases}
\end{eqnarray*}
 where we used  
 $    \sum_{\beta_i \leq a_i\leq r_i}   (-1)^{a_i-\beta_i}  \binom{r_i-\beta_i}{a_i-\beta_i}=0 $ because $\sum_{k=0}^n(-1)^n\displaystyle\binom{n}{k}=0$. The second case implies $r_1=r_2=\cdots=r_N=1$, $k=1$ and $N=\alpha_1\in \NN$ by   \eqref{fixed2}. Therefore      we have 
  \begin{eqnarray*} {\rm Coe}_{C^\alpha_m}{\bf X}=  \frac{(-1)^N}{N!}   \int_{\barM_{1,N}}\psi_1 \cdots \psi_N  =  \frac{(-1)^N}{24 N},  
  \end{eqnarray*}
   whenever $k=1$ ($N =\alpha_1$) and $|m|=0$,  and 
     $ {\rm Coe}_{C^\alpha_m}{\bf X}=0$ otherwise.  Taking   $Res_{\hbar=0}\ln$ to \eqref{reg} we have
    \beq\label{LemmX}{\bf X} =\sum_{N=1}^\infty\frac{(-1)^N}{24N}C_0^N=-\frac{1}{24}\ln\big(1+C_0\big)=-\frac{1}{24} \ln(1+\bar{\sZ}_6\sta|_{\hbar=0})
  = -\frac{1}{24}{\rm Res}_{\hbar=0} \bl \ln(1+\bar{\sZ}_6\sta)) /\hbar \br.
  \eeq
 

 \subsection{Some identities $II$}
 
  We now  determine  ${\rm Coe}_{(\ft')^i}  \ln (1+\sZ_6\sta)$ for $i=1,2$.
 By \eqref{Z1,5}, we have 
   $$
 \sZ\sta_6=\frac{\ft^4}{5}\ti\sZ_{1,5}(\hbar) = -1  + \frac{1}{  I_0 }  exp( -\frac{\ft}{\hbar}(t-T)-\frac{ \ft'g_1}{\hbar I_0} )\cdot    \sum_{d=0}^\infty q^d \frac{  \prod\limits_{m=1}^{5d}(-5\ft+m\hbar)  \prod\limits_{m=1}^d(m+\frac{\ft'}{\hbar})}{d! \prod\limits_{m=1}^d(-\ft+m\hbar)^5}.
$$

  As $\sZ_6\sta$ has no $q$-degree zero term, so is $  \ln (1+\sZ_6\sta)$. Together with the fact that $\sZ_6\sta$ is a formal power series in $\ft'$, we know $\ln (1+\sZ_6\sta)$ is also a formal power series in $\ft'$. 
 \begin{lemm}\label{log}Let $R$ be a ring and $A_j\in R[[q]]q^j$, $j=0,1,2,3,\cdots$. Then
\begin{eqnarray*}
&&\ln (1+ A_0+A_1\ft'+A_2(\ft')^2+\cdots)\\
&=&\ln(1+A_0)+   \frac{A_1}{1+A_0} \ft' +   \big(  \frac{A_2}{1+A_0}-\frac{A_1^2}{2(1+A_0)^2}\big) (\ft')^2 +  \text{higher powers of}\ \ft'
\end{eqnarray*}
 \end{lemm}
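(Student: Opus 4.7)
The plan is a direct Taylor expansion in $\ft'$, justified in the $q$-adic topology. The hypothesis $A_j\in R[[q]]q^j$ guarantees that for any fixed power $q^N$, only the terms with $j\leq N$ contribute, so the formal manipulations below converge coefficient-wise in $q$ and the logarithm is well-defined (as in the application, $A_0\in qR[[q]]$, so $1+A_0$ is a unit and $\ln(1+A_0)$ is a legitimate element of $R[[q]]$).

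First, I would factor out $1+A_0$, writing
\[
1+A_0+A_1\ft'+A_2(\ft')^2+\cdots = (1+A_0)\Bigl(1 + B_1\ft'+B_2(\ft')^2+\cdots\Bigr),
\qquad B_j:=\frac{A_j}{1+A_0}.
\]
Taking logarithms, the identity reduces to
\[
\ln\!\Bigl(1+\sum_{j\geq 0}A_j(\ft')^j\Bigr) = \ln(1+A_0) + \ln\!\Bigl(1+\sum_{j\geq 1}B_j(\ft')^j\Bigr).
\]
Now I would apply the standard formal identity $\ln(1+x)=x-\tfrac{1}{2}x^2+\tfrac{1}{3}x^3-\cdots$ with $x=\sum_{j\geq 1}B_j(\ft')^j$. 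Since $x\in(\ft')R[[q]][[\ft']]$, the sum converges in the $\ft'$-adic topology, and expanding modulo $(\ft')^3$ gives
\[
x = B_1\ft'+B_2(\ft')^2+O((\ft')^3),\quad x^2 = B_1^2(\ft')^2+O((\ft')^3),\quad x^n=O((\ft')^3)\ \text{for }n\geq 3.
\]
Therefore
\[
\ln(1+x) = B_1\ft' + \Bigl(B_2-\tfrac{1}{2}B_1^2\Bigr)(\ft')^2 + O((\ft')^3),
\]
and substituting $B_j=A_j/(1+A_0)$ yields the displayed formula.

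There is no real obstacle here; the lemma is a routine formal power series manipulation. The only subtlety is bookkeeping the two filtrations (in $q$ and in $\ft'$) simultaneously to ensure all series make sense — once the $q$-adic convergence from $A_j\in R[[q]]q^j$ is invoked to define the log and once the $\ft'$-adic convergence is invoked to expand it.
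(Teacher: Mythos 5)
Your proof is correct, and it is the canonical argument: factor out $1+A_0$, reduce to $\ln(1+x)$ with $x=\sum_{j\geq 1}B_j(\ft')^j$, and expand modulo $(\ft')^3$. The paper states this lemma without proof (treating it as a routine formal-power-series computation), so your write-up supplies exactly the argument the authors leave implicit; your remark that one needs $A_0$ to have positive $q$-order (as it does in the application, since $\sZ_6^\ast$ has no $q$-degree-zero term) for $\ln(1+A_0)$ to be defined is a fair and worthwhile clarification of the hypothesis.
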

Introduce $\sR(\hbar,\ft):=\sum\limits_{d=0}^\infty q^d  \displaystyle\frac{\prod\limits_{m=1}^{5d}(-5\ft+m\hbar)}{\prod\limits_{m=1}^d(-\ft+m\hbar)^5}$  and observe first 
 \begin{eqnarray*}&&\sum_{d=1}^\infty q^d \frac{  \prod\limits_{m=1}^{5d}(-5\ft+m\hbar)  \prod_{m=1}^d(m+\frac{\ft'}{\hbar})}{d! \prod\limits_{m=1}^d(-\ft+m\hbar)^5} = (\sR-1)  +C_1\frac{ \ft' }{\hbar}  + C_2 \frac{ (\ft')^2}{\hbar^2}   
 +  \text{higher powers of}\ \ft' , 
 \end{eqnarray*}
where $$C_1=  \sum_{d=1}^\infty q^d \frac{  \prod\limits_{m=1}^{5d}(-5\ft+m\hbar)  \sum_{m=1}^d \frac{1}{m}}{\prod\limits_{m=1}^d(-\ft+m\hbar)^5} \and C_2=  \sum_{d=2}^\infty q^d \frac{  \prod\limits_{m=1}^{5d}(-5\ft+m\hbar)  \sum_{1\leq a<b\leq d} \frac{1}{ab}}{\prod\limits_{m=1}^d(-\ft+m\hbar)^5} . $$
  Applying   Lemma \ref{log} and above observations to the last term of
  $$\ln (1+\sZ_6\sta)=-\ln I_0  -\frac{\ft}{\hbar}(t-T)-\frac{ \ft'g_1}{\hbar I_0} + \ln \Big( 1+ \sum_{d=1}^\infty q^d \frac{  \prod\limits_{m=1}^{5d}(-5\ft+m\hbar)  \prod\limits_{m=1}^d(m+\frac{\ft'}{\hbar})}{d! \prod\limits_{m=1}^d(-\ft+m\hbar)^5}  \Big),$$
  we obtain
 $$ {\rm Coe}_{(\ft')^1}   \ln (1+\sZ_6\sta) =-\frac{  g_1}{\hbar I_0} + \frac{C_1}{\hbar\sR}, \ \  \ {\rm Coe}_{(\ft')^2}   \ln (1+\sZ_6\sta) = \frac{1}{\hbar^2} \bl  \frac{C_2}{ \sR}-\frac{C_1^2}{2\sR^2} \br .$$

 For simplicity, denote  and find $$A:=\sum_{d=1}^\infty (5^5q)^d   \sum_{m=1}^d \frac{1}{m}, \and B:=\sum_{d=1}^\infty (5^5q)^d    \sum_{1\leq a<b\leq d} \frac{1}{ab}. 
 $$
   $$A(1-5^5q) = -\ln(1-5^5q) \and B(1-5^5q)= \sum_{d=2}^\infty (5^5q)^{d}  \frac{1}{d}\sum_{a=1}^{d-1}\frac{1}{a}.  $$
Then 
$$C_1
=A+ \frac{\hbar}{\ft}\cdot 2 \frac{d}{dt}A+  \text{higher powers of}\ \hbar,   \and
C_2
=B+ \frac{\hbar}{\ft}\cdot 2 \frac{d}{dt}B+  \text{higher powers of}\ \hbar,  $$


\begin{eqnarray*}
\sR(\hbar,\ft)=\sum_{d=0}^\infty (5^5q)^d +  \frac{\hbar}{\ft} \sum_{d=0}^\infty  2d  (5^5q)^d      +  \text{higher powers of}\ \hbar
=\frac{1}{1-5^5q}+  \frac{\hbar}{\ft}  \frac{d}{dt}\frac{2}{1-5^5q}  (\, mod  \, \hbar^2), 
\end{eqnarray*}
where the expansion is near $\hbar=0$.  We can now calculate the followings:
 \begin{eqnarray}\label{4coe}
  &&Coe_{(\ft')^{0}}  Res_{\hbar=0}  \ln(1+ {\sZ}_6\sta) =   Res_{\hbar=0} \bl Coe_{(\ft')^{0}} \ln(1+ {\sZ}_6\sta))  \br  =-\ft(t-T),\\
  &&  {\rm Coe}_{(\ft')^{1}}  {\rm Res}_{\hbar=0} \frac{ \ln(1+ {\sZ}_6\sta) }{\hbar}  = {\rm Res}_{\hbar=0} \frac{C_1}{\hbar^2\sR} 
=\frac{2}{\ft}\frac{d}{dt} (A (1-5^5q))= \frac{2}{\ft} \cdot \frac{5^5q}{1-5^5q},  \nonumber \\
&& {\rm Coe}_{(\ft')^{1}}  \eta ={\rm Res}_{\hbar=0} {\rm Coe}_{(\ft')^{1}}\ln(1+\sZ_6\sta )= {\rm Res}_{\hbar=0}(-\frac{  g_1}{\hbar I_0} + \frac{C_1}{\hbar\sR}) \nonumber\\
 &=&-\frac{  g_1}{  I_0} + A (1-5^5q)= -\frac{  g_1}{  I_0} -\ln(1-5^5q),   \nonumber\\
 && {\rm Coe}_{(\ft')^{2}}  \eta  ={\rm Res}_{\hbar=0} {\rm Coe}_{(\ft')^{2}}\ln(1+\sZ_6\sta )= {\rm Res}_{\hbar=0}( \frac{1}{\hbar^2} \bl  \frac{C_2}{ \sR}-\frac{C_1^2}{2\sR^2} \br)   \nonumber\\
 &=& \frac{2}{\ft} \frac{dB (1-5^5q)}{dt}  +  \frac{1}{2}\frac{4}{\ft}  A (1-5^5q) \frac{d A(1-5^5q) }{dt}   \nonumber\\& = &\frac{2}{\ft}\sum_{d=2}^\infty (5^5q)^{d}  \sum_{a=1}^{d-1}\frac{1}{a}   +\frac{2}{\ft}  \frac{5^5q}{1-5^5q} \ln(1-5^5q)
 =0 \nonumber .
 \end{eqnarray}
 
\black

 \section{Appendix: Differential relations arising from geometry}
\label{secdifferentialrelations}

We shall derive some differential relations among periods from geometry
in such a way that we keep track of the arithmetic origins of the coefficients in the relations. 
These relations are needed to prove Proposition \ref{lemmadesiredidentity}  for type B contribution studied in $g=1$ MSP localization.
We separate the discussions into this appendix since the relations obtained seem to be of independent interest.

The discussion applies to any one-parameter family of Calabi-Yau threefolds, but the focus of this work is on the quintic/mirror quintic family.
The corresponding 
differential operator is
\begin{equation*}
\mathcal{L}=\theta^{4}-\alpha \prod_{k=1}^{4} (\theta+{k\over 5})\,,
\end{equation*}
where 
$\alpha =5^5 q=5^5 e^{t}$ and $\theta=\alpha{\partial\over \partial \alpha}$.
This can be regarded as the Picard-Fuchs operator for the mirror quintic family,
or the quantum differential equation for the quintic family itself.
The properties 
are about the differential operator itself, regardless of whether we are looking at 
the A-model or B-model.
However, we shall use the language for the B-model, which is more convenient.\\

We first expand the above operator as
\begin{equation*}
\mathcal{L}_{\mathrm{hyp}}=\theta^{4}-\alpha \sum_{k=0}^{3} \sigma_{4-k}\theta^{k},
\end{equation*}
where $\sigma_{k}$ means the $k$th fundamental symmetric polynomials of the numbers
$1/5,2/5,3/5,4/5$.
For the quintic/mirror quintic case in consideration, one has
\begin{equation}\label{eqnsymmetricpolynomialsofroots}
\sigma_{1}=2\,,
\quad
\sigma_{2}={7\over 5}\,,
\quad
\sigma_{3}={2\over 5}\,,
\quad
\sigma_{4}={24\over 5^4}\,.
\end{equation}
Introducing the coordinate
$
t=\ln q$,
then the above differential operator becomes
\begin{equation}\label{eqnPFoperatorint}
\mathcal{L}_{\mathrm{hyp}}=(1-\alpha)\partial_{t}^{4}-\alpha \sum_{k=0}^{3} \sigma_{4-k}\partial_{t}^{k} \,,
\end{equation}
Introducing the notation (recall $\alpha=5^{5}q=5^{5}e^{t}$)
\begin{equation}\label{eqnCttt}
C_{ttt}={1\over 1-\alpha}={1\over 1-5^{5}q}\,,
\quad
C=\partial_{t}\ln C_{ttt}={\alpha\over 1-\alpha}={5^{5}q\over 1-5^{5}q}\,,
\end{equation}
then we can normalize the leading term of the differential operator to $1$ and get 
\begin{equation*}
\mathcal{L}:=
{1\over 1-\alpha}\mathcal{L}_{\mathrm{hyp}}=\partial_{t}^{4}
+\sum_{k=0}^{3}a_{k}\partial_{t}^{k}\,,\quad a_{k}=
- \sigma_{4-k}C\,.
\end{equation*}
The $4$ solutions of the corresponding differential equation obtained by the Frobenius method near the singular point $\alpha=0$ are denoted by
$
(I_{0},I_{1},I_{2},I_{3})$. In particular, one has
\begin{equation}\label{eqnboundaryconditionsforI0I1}
I_{0}=~_{4}F_{3}({1\over 5},{2\over 5},{3\over 5},{4\over 5};1,1,1;\alpha)\,,
\quad
I_{1}=\log ({\alpha\over 5^5})\cdot I_{0}+\cdots.
\end{equation}
Then one has
\begin{equation*}
T:={I_{1}\over I_{0}}=t+\cdots\,.
\end{equation*}

In the B-model, the operator $\mathcal{L}_{\mathrm{hyp}}$
is the Picard-Fuchs operator, and the four solutions above are the periods of the mirror quintic.
In the A-model, they are the first four coefficients in the $H$-expansion of the $I$-function $e^{H q}J$,
where $H$ is the hyperplane class of $\mathbb{P}^{4}$
and
$J$ is the $J$-function.

\subsection{Special geometry relation}

The set of periods has the special geometry structure \cite{Strominger:1990pd, Freed:1999sm}
which implies that there exists a holomorphic function $F$ (called the prepotential) of $T=I_{1}/I_{0}$,
such that
the set of periods above has the following structure
\begin{equation}\label{eqnspecialgeometry}
(I_{0},I_{1},I_{2},I_{3})=I_{0}(1,T,F_{T}, TF_{T}-2F)\,.
\end{equation}
This fact can be easily proved by using Griffiths transversality.
This structure in fact holds everywhere, but we shall only focus on a neighborhood near the point $\alpha=0$ near which we have
\begin{equation}\label{eqnprepotential}
F(T)={1\over 3!}T^{3}+\mathrm{quadratic~polynomial~in~}T+\mathcal{O}(e^{ T})\,.
\end{equation}
It follows from this structure (called the special geometry structure)
that the $4$ solutions $I_{k},k=0,1,2,3$ above, after normalization by
the fundamental period $I_{0}$, are solutions to the following equation (see \cite{Ceresole:1992su, Ceresole:1993qq})
\begin{equation*}
 \partial_{T}^2 F_{TTT}^{-1}\partial_{T}^2 ({I_{k}\over I_{0}})=0\,,
\quad k=0,1,2,3\,.
\end{equation*}

By comparing the leading terms, we can see that 
the Picard-Fuchs operator \eqref{eqnPFoperatorint} must satisfy the relation
\begin{equation}\label{eqnmatchingtwooperators}
\mathcal{L} =\mathcal{D}\,,
\end{equation}
where
\begin{equation*}
\mathcal{D}:=\left({1\over I_{0}}F_{TTT}^{-1}({\partial t\over \partial T})^4\right)^{-1} \circ \partial_{T}^2 F_{TTT}^{-1}\partial_{T}^2 \circ {1\over I_{0}}\,.
\end{equation*}\\

We now study the consequences of this identity by using the ``computing twice" trick.
We shall see that this identity 
\eqref{eqnmatchingtwooperators}
gives rise to some differential relations.\\

First,  it is easy to see that we can rewrite $\mathcal{D}$ as
\begin{eqnarray}\label{eqnoperatorD}
\mathcal{D}&=&
\left(\partial_{t}+\partial_{t}\log F_{TTT}^{-1}+\partial_{t}\log ({\partial t \over \partial T})^{3}+\partial_{t}\log {1\over I_{0}}\right)\circ
\left(\partial_{t}+\partial_{t}\log F_{TTT}^{-1}+\partial_{t}\log ({\partial t\over \partial T})^{2}+\partial_{t}\log {1\over I_{0}}\right) \nonumber\\
&&
\circ
\left(\partial_{t}+\partial_{t}\log ({\partial t \over \partial T}) +\partial_{t}\log {1\over I_{0}}\right)\circ
\left(\partial_{t}+\partial_{t}\log {1\over I_{0}}\right)\,.
\end{eqnarray}
For convenience, we introduce the following notations
\begin{eqnarray}\label{eqnck}
&&c_{1}=\partial_{t}\log {1\over I_{0}}\,,\quad 
c_{2}=\partial_{t}\log ({\partial t \over \partial T}) +\partial_{t}\log {1\over I_{0}} \,,\nonumber\\
&&c_{3}=\partial_{t}\log F_{TTT}^{-1}+\partial_{t}\log ({\partial t\over \partial T})^{2}+\partial_{t}\log {1\over I_{0}} \,,\quad 
c_{4}=\partial_{t}\log F_{TTT}^{-1}+\partial_{t}\log ({\partial t \over \partial T})^{3}+\partial_{t}\log {1\over I_{0}} \,.
\end{eqnarray} 
They satisfy the obvious relation
\begin{equation}\label{eqndifferencesofckrelation}
c_{2}-c_{1}=c_{4}-c_{3}\,.
\end{equation}

Straightforward computations show that\footnote{Hereafter $'$ means $\partial_{t}$ and similar convention is used for $''$ etc.}
\begin{equation*}
\mathcal{D}=\partial_{t}^{4}
+b_{3}\partial_{t}^{3}+b_{2}\partial_{t}^2+b_{1}\partial_{t}+b_{0}\,,
\end{equation*}
where
\begin{eqnarray}\label{eqnbkintermsofck}
 b_{3}&=&\sum_{k}c_{k}\,,\quad 
 b_{2}=\sum_{k\neq l}c_{k}c_{l}+3c_{1}'+2c_{2}'+c_{3}'\,,\nonumber\\
 b_{1}&=&\sum_{j\neq k\neq l}c_{j}c_{k}c_{l}
+(2c_{2}+2c_{3}+2c_{4})c_{1}'
+(2c_{1}+c_{3}+c_{4})c_{2}'
+
(c_{1}+c_{2})c_{3}'
+3c_{1}''+c_{2}''\,,\nonumber\\
b_{0}&=&c_{1}c_{2}c_{3}c_{4}
+(c_{2}c_{3}+c_{2}c_{4}+c_{3}c_{4})c_{1}'+
(c_{1}c_{3}+c_{1}c_{4})c_{2}'
+2c_{1}'c_{2}'+
 c_{1}c_{2}c_{3}'\nonumber\\
 &&+c_{1}'c_{3}'+c_{2}c_{1}''+c_{3}c_{1}''+c_{4}c_{1}''
+c_{1}c_{2}''+c_{1}'''\,.
\end{eqnarray}

\subsubsection{Matching degree three terms: Yukawa coupling $F_{TTT}$ in terms of period integrals}

The matching of the coefficients of $\partial_{t}^{3}$ yields $b_{3}=a_{3}$ which according to
\eqref{eqnbkintermsofck} is
\begin{equation*}
c_{1}+c_{2}+c_{3}+c_{4}=a_{3}\,.
\end{equation*}
Plugging in the relation $a_{k}=
- \sigma_{4-k}C$ and \eqref{eqnck}, we get the relation

\begin{equation}\label{eqnb3relation}
\partial_{t}\log F_{TTT}+2\partial_{t}\log I_{0}+3\partial_{t}\log {\partial T\over \partial t}=-{1\over 2}a_{3}= \sigma_{1}C\,.
\end{equation}
This gives a relation between the transcendental series $I_{0},  {\partial T\over \partial t} $ etc., and 
the rational function $C$.\\

For later use, we 
introduce furthermore
\begin{equation}\label{eqnABCY}
A=\partial_{t}\log {\partial T\over \partial t}\,,
\quad
B=\partial_{t}\log I_{0}\,,
\quad
Y=\partial_{t}\log F_{TTT}\,.
\end{equation}
The above relation is then (recall \eqref{eqnCttt})
\begin{equation*}
Y+2B+3A={1\over 2}\sigma_{1}C\,.
\end{equation*}
That is
\begin{equation*}
F_{TTT}={1\over (1-\alpha)^{\sigma_{1}\over 2}} {1\over I_{0}^2 } {1\over T'^3}\cdot const
\end{equation*}
for some constant $const$
which can be fixed to be $1$ by using the boundary conditions in \eqref{eqnboundaryconditionsforI0I1}
and
the known asymptotic behaviour of the the Yukawa coupling $F_{TTT}$
following from \eqref{eqnprepotential}. 

\subsubsection{Matching degree two terms: flatness of Gauss-Manin}

For the next coefficients, we have $b_{2}=a_{2}$. By \eqref{eqnbkintermsofck} this is
\begin{equation*}
\sum_{k\neq l}c_{k}c_{l}+3c_{1}'+2c_{2}'+c_{3}'=-\sigma_{2}C\,.
\end{equation*}
Using the $b_{3}$-relation to eliminate $c_{3}$, we can see that
the above relation becomes
\begin{equation}\label{eqnflatnessb2relation}
-A'-4B'-C'-A^2-2B^2+2BC+C^2-2AB+AC=-\sigma_{2}C\,.
\end{equation}

This relation is discussed in \cite{Lian:1994zv}
by using the Wronskian method.
It is further studied in \cite{Yamaguchi:2004bt, Hosono:2008ve, Zhou:2013hpa}.
In particular, as pointed out explicitly in \cite{Yamaguchi:2004bt, Zhou:2013hpa},
this relation is equivalent to the flatness of the Gauss-Manin connection 
on the Hodge bundle.
Note that the above differential relation about $A,B,C$ is in fact an identity without referring to the prepotential, although
in eliminating $c_{3}$ above we have used the $b_{3}$-relation whose definition does require
the existence of prepotential.
In the next section, we shall see that this relation can be naturally rephrased in terms of Wronskians, again without using the prepotential,
which offers a more intrinsic way of explaining what this relation is.

\subsubsection{Matching degree one terms: symplectic structure}

We now match the degree one terms in \eqref{eqnmatchingtwooperators}.
By straightforward calculation from \eqref{eqnbkintermsofck}, this is equivalent to
\begin{eqnarray}\label{eqnsymplecticstructureb1relation}
b_{1}=b_{2}'+ {1\over 2}b_{2}b_{3}-{1\over 8}b_{3}^3-{1\over 2}b_{3}''-{3\over 4}b_{3}b_{3}'\,.
\end{eqnarray}
This relation is studied in details in \cite{Ceresole:1992su, Ceresole:1993qq, Lian:1994zv, Almkvist:2004differential}.
In particular, as pointed out in \cite{Ceresole:1992su, Ceresole:1993qq}, this relation means that the Gauss-Manin connection is symplectic.
Note that, in this relation, the $c_{k}$'s are combined in such a way that
they do not appear individually.
Hence this relation does not give new differential relations among them.

\subsubsection{Matching degree zero terms: Picard-Fuchs for fundamental period}

Consider the degree zero terms in \eqref{eqnmatchingtwooperators}.
By using the relation $b_{3}=a_{3}$ and \eqref{eqnbkintermsofck}, we get an ODE for $c_{1}$:
\begin{eqnarray}\label{eqnb0relation}
 b_{0}
=c_{1}''' -c_{1}^4+6c_{1}'c_{1}^2-4c_{1}c_{1}''-3c_{1}'c_{1}'
+b_{3} (c_{1}''-3c_{1}c_{1}'+c_{1}^3)
+b_{2} (c_{1}'-c_{1}^2)
+b_{1}c_{1}.
\end{eqnarray}
On the other hand, the Picard-Fuchs equation
for the fundamental period $I_{0}$ reads (recall $B=\partial_{t}\log I_{0}$)
\begin{eqnarray*}
0&=&{ I_{0}''''\over I_{0}}+a_{3 } { I_{0}'''\over I_{0}}+a_{2}{ I_{0}''\over I_{0}}+a_{1}{ I_{0}'\over I_{0}}+a_{0} \,.
\end{eqnarray*}
Recall that $c_{1}=-B$, we can see that the Picard-Fuchs equation for $I_{0}$ above is exactly the relation
$
b_{0}=a_{0}$.

\subsubsection{A summary}

We remark that the $b_{0}$-relation \eqref{eqnb0relation} is always true, independent of the geometry structure.
The $b_{1}$-structure \eqref{eqnsymplecticstructureb1relation} is the symplectic structure, which can be further translated into relations on the periods,
see the discussion in the next section below. It does not refer to the prepotential, as we have pointed out earlier that it is not a relation
on the individual $c_{k}$'s. 
The $b_{2}$-relation \eqref{eqnflatnessb2relation}, the flatness condition, can also be phrased without referring to
the prepotential, which can also be seen from the explicit expression in terms of Wronskians discussed in the next section.
Finally, the $b_{3}$-relation \eqref{eqnb3relation} gives an identity that the prepotential satisfies, in case
the special geometry structure is present.

\subsection{Wronskian method}

The Wronskian method and the exterior square structure are useful in simplifications and in thinking about
what the quadratic differential polynomials in MSP calculation are.
A computational evidence is that 
the derivative of a normalized period is naturally expressed in terms of Wronskians.

It is shown in \cite{Almkvist:2004differential}
that 
for a generic order 4 differential equation $\mathcal{L}$,
the Wronskians
\begin{equation*}
M_{ab}=\det
\begin{pmatrix}
I_{a} & I_{b}\\
I_{a}' & I_{b}'
\end{pmatrix}
\end{equation*}
satisfies an order 6 differential equation.
This is the exterior square of $\mathcal{L}$.
We now review the results in \cite{Almkvist:2004differential}, which are important for the proof of the 
main result in this work.
We denote
\begin{eqnarray*}
&&u_{1}=M_{ab}\,,
\quad
u_{2}=
M_{ab}'=\det
\begin{pmatrix}
I_{a} & I_{b}\\
I_{a}'' & I_{b}''
\end{pmatrix}\,,\nonumber\\
&& u_{3}=\det
\begin{pmatrix}
I_{a} & I_{b}\\
I_{a}''' & I_{b}'''
\end{pmatrix}\,,\quad
u_{4}=\det
\begin{pmatrix}
I_{a}' & I_{b}'\\
I_{a}'' & I_{b}''
\end{pmatrix}\,, \nonumber\\
&&
u_{5}=\det
\begin{pmatrix}
I_{a}' & I_{b}'\\
I_{a}''' & I_{b}'''
\end{pmatrix}\,,\quad
u_{6}=\det
\begin{pmatrix}
I_{a}'' & I_{b}''\\
I_{a}''' & I_{b}'''
\end{pmatrix}\,.
\end{eqnarray*}

Recall that the $I_{a}$'s are annihilated by the order 4 differential operator
\begin{equation*}
\mathcal{L}=\partial^4+a_{3}\partial^3+a_{2}\partial^2+a_{1}\partial+a_{0}\,.
\end{equation*}
Then the set $\{u_{k}, k=1,2,\cdots 6\}$ satisfy the following relations
\begin{eqnarray}\label{eqnrecursiveWronskian}
u_{1}'&=&u_{2}\,,\quad 
u_{2}'=u_{3}+u_{4}\,,\quad 
u_{3}'=u_{5}-a_{1}u_{1}-a_{2}u_{2}-a_{3}u_{3}\,,\nonumber\\
u_{4}'&=&u_{5}\,,\quad 
u_{5}'=u_{6}+a_{0}u_{1}-a_{2}u_{4}-a_{3}u_{5}\,,\quad 
u_{6}'=a_{0}u_{2}+a_{1}u_{4}-a_{3}u_{6}\,.
\end{eqnarray}

\subsubsection{Symplectic structure}\label{secsymplecticstructureviaWronskian}

It is shown in \cite{Almkvist:2004differential}
that the above system gives rise to an order 6 differential equation for $u_{1}$.
The solutions are $M_{ab}\,, a,b=1,2,\cdots 4$.
When the symplectic structure \eqref{eqnsymplecticstructureb1relation}
\begin{equation}\label{eqnsymplecticstructure}
a_{1}={1\over 2}a_{2}a_{3}-{1\over 8}a_{3}^3+a_{2}'-{3\over 4}a_{3}a_{3}'-{1\over 2}a_{3}''
\end{equation}
is present, the order 6 differential equation becomes an order 5 differential equation.
We now recall the computations therein.
Define
\begin{equation*}
U=u_{1}'''+a_{3}u_{1}''+a_{2}u_{1}' +a_{1}u_{1}\,,
\end{equation*}
then
\begin{equation}\label{eqnUrelation}
U-a_{3}u_{4}=2u_{5}\,.
\end{equation}
Taking the derivative, one gets
\begin{equation*}
U'-a_{3}' u_{4}-a_{3} u_{5}=2 u_{6}+2a_{0}u_{1}-2a_{2}u_{4}-2a_{3}u_{5}\,.
\end{equation*}
Eliminating $u_{5}$ using \eqref{eqnUrelation},  we obtain 
\begin{equation}\label{eqnU'}
U'+{1\over 2}a_{3}U=u_{4}(a_{3}'+{1\over 2}a_{3}^2-2a_{2})+2a_{0}u_{1}+2u_{6}
:=C_{1}u_{4}+2a_{0}u_{1}+2u_{6}\,.
\end{equation}
Taking one more derivative and eliminating $u_{5}$ using \eqref{eqnUrelation}, we obtain 
\begin{equation*}
U''+{1\over 2}a_{3}'U+{1\over 2} a_{3}U'
=C_{1}'u_{4}+{1\over 2}C_{1} (U-a_{3}u_{4}) +2a_{0}'u_{1}+2a_{0}u_{1}'+2 (a_{0}u_{1}'+a_{1}u_{4}-a_{3}u_{6})\,.
\end{equation*}
Using 
\eqref{eqnU'}
to eliminate $u_{6}$, we obtain
\begin{eqnarray*}
&&U''+{1\over 2}a_{3}'U+{1\over 2} a_{3}U'\\
&=&C_{1}'u_{4}+{1\over 2}C_{1} (U-a_{3}u_{4})
+2a_{0}'u_{1}+4a_{0}u_{1}'+2a_{1}u_{4}
-a_{3} (U'+{1\over 2}a_{3}U-C_{1}u_{4}-2a_{0}u_{1}).
\end{eqnarray*}
Simplifying, one has
\begin{eqnarray*}
W&:=&U''+{1\over 2}a_{3}'U+{3\over 2} a_{3}U'-{1\over 2}C_{1}U+{1\over 2}a_{3}U -
2a_{0}'u_{1}-4a_{0}u_{1}'-2 a_{0}a_{3}u_{1}\nonumber \\
&=&C_{1}'u_{4}+{3\over 2}C_{1}a_{3}u_{4}
+2a_{1}u_{4} :=C_{2}u_{4}
\end{eqnarray*}
If the condition $C_{2}=0$ is satisfied, then we are done.
It turns out that this condition is exactly the symplectic structure relation
\eqref{eqnsymplecticstructure}.
Otherwise, we get 
\begin{equation*}
W'=C_{2}u_{4}'+C_{2}'u_{4}=C_{2}u_{5}+C_{2}'u_{4}
={1\over 2}C_{2} (U-a_{3}u_{4})+C_{2}'u_{4}
={1\over 2}C_{2}U+(C_{2}'-{1\over 2}C_{2}a_{3})u_{4}\,.
\end{equation*}
Hence
\begin{equation}\label{eqnorder6}
W'={1\over 2}C_{2}U+(C_{2}'-{1\over 2}C_{2}a_{3}){ W\over C_{2}}\,.
\end{equation}
This gives an order 6 ODE as desired.

\subsubsection{Simplication of Wronskians in the presence of symplectic structure}

As shown in  \cite{Almkvist:2004differential},
the symplectic structure condition \eqref{eqnsymplecticstructure} is equivalent to either of the following relations
\begin{eqnarray*}
{\partial^{2}\over \partial T^2}
{I_{3}\over I_{0}}
&=&T {\partial^{2}\over \partial T^2}
{I_{2}\over I_{0}}\,,\\
{\partial^{2}\over \partial T^2}  {I_{2}\over I_{0}}&=&
{1\over I_{0}^2( \partial_{t}T)^3} \exp (-{1\over 2}\int a_{3})\,.
\end{eqnarray*}
This condition is evidently satisfied  
if the stronger special geometry structure \eqref{eqnspecialgeometry} holds, as it should be.
In this case, the second condition above condition is nothing but the $b_{3}$-relation in \eqref{eqnb3relation}.
Note that this should be thought of as a relation on $b_{1}$ instead of one on $b_{3}$,
since in general there is no notion of prepotential but one can always talk about the condition above.

In terms of Wronskians, this is equivalent to 
\begin{eqnarray*}
W=U''+{1\over 2}a_{3}'U+{3\over 2} a_{3}U'-{1\over 2}C_{1}U+{1\over 2}a_{3}U -
2a_{0}'u_{1}-4a_{0}u_{1}'-2 a_{0}a_{3}u_{1}
=0\,,
\end{eqnarray*}
where
\begin{equation*}
C_{1}=
(a_{3}'+{1\over 2}a_{3}^2-2a_{2})\,,
\quad
U-a_{3}u_{4}=2u_{5}\,.
\end{equation*}
We now focus on the relation
\begin{equation*}
{\partial^{2}\over \partial T^2}
{I_{3}\over I_{0}}
=T {\partial^{2}\over \partial T^2}{I_{2}\over I_{0}}\,.
\end{equation*}
above.
Integrating, one gets
\begin{equation*}
{\partial\over \partial T}
{I_{3}\over I_{0}}
=T {\partial\over \partial T}{I_{2}\over I_{0}}-  {I_{2}\over I_{0}}+const
=T^{2} {\partial \over \partial T} ({1\over T} {I_{2}\over I_{0}})+const\,,
\end{equation*}
for some constant $const$.
Using the boundary conditions given in
\eqref{eqnboundaryconditionsforI0I1},\eqref{eqnspecialgeometry},\eqref{eqnprepotential}
 (in fact, only the $\log \alpha$ terms) for the periods $I_{2},I_{3}$
one can see that $const=0$.

Now since
\begin{equation*}
{\partial \over \partial T}={1\over T'}{\partial\over \partial t}\,,
\end{equation*}
the above is equivalent to
\begin{equation*}
{\partial\over \partial t} ({I_{3}\over I_{0}})=T^{2}{\partial\over \partial t}  ({1\over T}{I_{2}\over I_{0}})=
T^{2}{\partial\over \partial t}  ({I_{2}\over I_{1}})\,.
\end{equation*}
This gives 
\begin{equation}\label{eqnsymplecticstructureWronskian}
I_{0}I_{3}'-I_{0}'I_{3}=I_{1}I_{2}'-I_{1}'I_{2}\,.
\end{equation}
That is,
\begin{equation*}
\det
\begin{pmatrix}
I_{0} & I_{3}\\
I_{0}' &  I_{3}'
\end{pmatrix}
=
\det
\begin{pmatrix}
I_{1} & I_{2}\\
 I_{1}' &  I_{2}'
\end{pmatrix}\,.
\end{equation*}
This is why the order 6 differential equation \eqref{eqnorder6} for the Wronskians,
which has $6$ solutions $M_{ab}$, becomes
an order 5 differential equation.

\subsubsection{Flatness condition in terms of Wronskians in the present of symplectic structure}
\label{secflatnessWronskians}

The exterior square structure above, which is the underlying geometry of Wronskians, 
provides a nice way to express the identities we found above by matching
coefficients in \eqref{eqnmatchingtwooperators}.\\

We have done this for the symplectic structure relation in terms of
\begin{equation*}
{\partial^{2}\over \partial T^2}
{I_{3}\over I_{0}}
=T {\partial^{2}\over \partial T^2}{I_{2}\over I_{0}},
\end{equation*}
or equivalently in terms of the Wronskians
\begin{equation*}
I_{0}I_{3}'-I_{0}'I_{3}=I_{1}I_{2}'-I_{1}'I_{2}\,.
\end{equation*}

We now do this for the flatness condition without assuming the symplectic structure.
We specialize to $M_{ab}$ with $a=0,b=1$ and again
use the following convention $u_{k}, k=1,2,\cdots 6$
for the Wronskians.
One has the relations
\begin{equation*}
u_{2}'=u_{3}+u_{4}\,,
\end{equation*}
and also
\begin{equation*}
{u_{2}'\over u_{1}}=({u_{2}\over u_{1}})'+({u_{2}\over u_{1}})^2\,.
\end{equation*}
The latter is due to $u_{2}=u_{1}'$.
We write them in terms of the coefficients $c_{k}$ in \eqref{eqnck}
and the Wronskians.
From $c_{1}=-B, c_{2}=-A-B$, it is easy to see that
\begin{eqnarray*}
{u_{2}\over u_{1}}=-(c_{1}+c_{2})\,,
\quad 
({u_{2}\over u_{1}})'=-(c_{1}'+c_{2}')\,,
\quad 
{u_{3}\over u_{1}}=-2c_{1}'-c_{2}'+3c_{1}c_{2}+(c_{1}-c_{2})^2\,,
\quad 
{u_{4}\over u_{1}}=c_{1}'+c_{1}c_{2}\,,
\nonumber\\
{u_{4}+u_{3}\over u_{1}}={u_{2}'\over u_{1}}=({u_{2}\over u_{1}})'+({u_{2}\over u_{1}})^2=
-(c_{1}'+c_{2}')+(c_{1}+c_{2})^2\,,
\quad 
{u_{4}-u_{3}\over u_{1}}=3c_{1}'+c_{2}'-(c_{1}^2+c_{2}^2).
\end{eqnarray*}
Recall that the 
$b_{2}$, $b_{3}$-terms in \eqref{eqnbkintermsofck} are
\begin{equation*}
b_{2}={1\over 2}((\sum c_{k})^2-\sum c_{k}^2)+3c_{1}'+2c_{2}'+c_{3}'\,,
\quad
b_{3}=\sum c_{k}\,.
\end{equation*}
Using the relation \eqref{eqndifferencesofckrelation},
we can solve for $c_{3}, c_{4}$ as follows
\begin{equation*}
c_{3}={1\over 2}b_{3}-c_{2}\,,
\quad
c_{4}={1\over 2}b_{3}-c_{1}\,.
\end{equation*}
Plugging these into the $b_{2}$-coefficient, we have
\begin{eqnarray*}
b_{2}&=&{1\over 2}b_{3}^2-{1\over 2} (c_{1}^2+c_{2}^2+c_{3}^2+c_{4}^2)+3c_{1}'+2c_{2}'+c_{3}'\\
&=& {1\over 4}b_{3}^2-(c_{1}^2+c_{2}^2)+{1\over 2}b_{3}(c_{1}+c_{2})+{1\over 2}b_{3}'+3c_{1}'+c_{2}'\,.
\end{eqnarray*}
In terms of Wronskians, we have
\begin{equation*}
c_{1}'={u_{4}\over u_{1}} -c_{1}c_{2}\,,
\quad
2c_{1}'+c_{2}'=-{u_{3}\over u_{1}} +3c_{1}c_{2}+(c_{1}-c_{2})^2\,.
\end{equation*}
Eliminating the derivatives from the $b_{2}$-expression, we get
\begin{eqnarray*}
b_{2}&=&{1\over 4}b_{3}^2-(c_{1}^2+c_{2}^2)+{1\over 2}b_{3}(c_{1}+c_{2})+{1\over 2}b_{3}'+3c_{1}'+c_{2}'
\\
&=&
{1\over 4}b_{3}^2-(c_{1}^2+c_{2}^2)+{1\over 2}b_{3}(c_{1}+c_{2})+{1\over 2}b_{3}'
+{u_{4}\over u_{1}}-{u_{3}\over u_{1}}+2 c_{1}c_{2}+(c_{1}-c_{2})^2\\
&=&
{1\over 4}b_{3}^2+{1\over 2}b_{3}(c_{1}+c_{2})+{1\over 2}b_{3}'
+{u_{4}-u_{3}\over u_{1}}\,.
\end{eqnarray*}
Now using the relation $u_{2}/u_{1}=-(c_{1}+c_{2})$ above, this gives
\begin{equation*}
-{1\over 2}b_{3} {u_{2}\over u_{1}}+ {u_{4}-u_{3}\over u_{1}}
=b_{2}-{1\over 2}b_{3}'-{1\over 4}b_{3}^2\,.
\end{equation*}
We hence get the flatness condition in terms of Wronskians $u_{1},u_{2},u_{3},u_{4}$:
\begin{equation}\label{eqnflatnessWronskian}
( {u_{4}-u_{3}})
=(b_{2}-{1\over 2}b_{3}'-{1\over 4}b_{3}^2) u_{1}+{1\over 2}b_{3} u_{2}\,.
\end{equation}
Substituting the relations $b_{3}=a_{3}=-\sigma_{1}C=-2C$
and $b_{2}=a_{2}=-\sigma_{2}C$, recall that $C={\alpha\over  1-\alpha}$
satisfies
\begin{equation*}
C'=C^2+C\,,
\end{equation*}
we then get
\begin{equation*}
( {u_{4}-u_{3}})
= -C \left(   (\sigma_{2}-1) u_{1}+{1\over 2} \sigma_{1} u_{2}\right).
\end{equation*}
That is,
\begin{equation*}
5(1-\alpha) ( {u_{4}-u_{3}})
= -\alpha  \left( 2  u_{1}+5 u_{2}\right),
\end{equation*}
as
$
\sigma_{2}={7/ 5}
$ from \eqref{eqnsymmetricpolynomialsofroots}.
It is easy to check that this equation is the same as \eqref{eqnflatnessb2relation}, as it should be.
Note that here we did not use the $b_{1}$-relation \eqref{eqnsymplecticstructureb1relation}, hence no symplectic structure is used.

\begin{rema}
The quantity $C=\partial_{t}\ln C_{ttt}$ is independent of the $\sigma_{k}$'s.
This is in contrast with the value of the Yukawa coupling $C_{ttt}:=\int \Omega\wedge \partial_{t}^3\Omega$ which is computationally solved from
\begin{equation*}
\partial_{t}\log C_{ttt}=-{a_{3}\over 2}\,,
\end{equation*}
and is given by
\begin{equation*}
C_{ttt}=const\cdot \exp (\int -{a_{3}\over 2} )=const\cdot \exp (\int {\sigma_{1}\over 2} C )\,,
\end{equation*}
for some constant $ const $.
Hence it depends on the specific value of $a_{3}$ or equivalently $\sigma_{1}$
for the present hypergeometric case.
\end{rema}

\section{ Appendix: Antiderivatives of differential polynomials, and its application\black}
\label{sectypeBseriessimplification}

\subsection{Preparation: anti-derivatives}

The structure in the relation \eqref{eqnmatchingtwooperators} has other interesting consequences besides the differential relations we obtain in Appendix \ref{secdifferentialrelations}.
This structure is written as
\begin{equation}\label{eqnL=D}
\mathcal{L}=\mathcal{D}\,,
\end{equation}
where as in \eqref{eqnoperatorD}
\begin{equation} \label{eqnPFoperatorL}
\mathcal{L}= \partial_{t}^4+\sum_{k=0}^3 (-\sigma_{4-k}C)\partial_{t}^{k}\,,
\end{equation}
and as in \eqref{eqnck}
\begin{equation} \label{eqnPFoperatorD}
\mathcal{D}=(\partial_{t}+c_{4})(\partial_{t}+c_{3})(\partial_{t}+c_{2})(\partial_{t}+c_{1})\,.
\end{equation}

For later use, we shall denote
\begin{equation*}
\beta=1-\alpha\,,
\quad
\mathcal{D}_{k}=(\partial_{t}+c_{k})\,,
\quad k=1,2,3,4\,.
\end{equation*}

\subsubsection{Commuting differential operator actions on periods}

Some first properties of the relation \eqref{eqnmatchingtwooperators} are as follows.

\begin{lemm}\label{lemcommutingdifferentialoperators}
For the four solutions $I_{k},k=0,1,2,3$ to the Picard-Fuchs equation, one has
\begin{equation*}
\mathcal{L}_{\mathrm{hyp}}\circ \partial_{t}I_{k} =\partial_{t}^{4}I_{k}\,.
\end{equation*}
\end{lemm}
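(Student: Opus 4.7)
\textbf{Proof plan for Lemma \ref{lemcommutingdifferentialoperators}.}

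The proof is a short commutator computation together with one application of the Picard--Fuchs equation in its normalized form. First I note that the only $t$-dependence in the coefficients of $\mathcal{L}_{\mathrm{hyp}}$ comes through $\alpha=5^5 e^t$, which satisfies the convenient relation $\partial_t\alpha=\alpha$. Writing
\[
\mathcal{L}_{\mathrm{hyp}}=(1-\alpha)\partial_t^4-\alpha\sum_{j=0}^3\sigma_{4-j}\partial_t^{j},
\]
I differentiate the identity $\mathcal{L}_{\mathrm{hyp}}I_k=0$ in $t$ and separate the terms in which $\partial_t$ hits a coefficient from those in which it hits $I_k$. This will produce $\mathcal{L}_{\mathrm{hyp}}(\partial_t I_k)$ plus a correction coming from $\partial_t(1-\alpha)=-\alpha$ and $\partial_t\alpha=\alpha$.

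The first step is therefore to establish the commutator identity
\[
\mathcal{L}_{\mathrm{hyp}}\circ\partial_t-\partial_t\circ\mathcal{L}_{\mathrm{hyp}}
=\alpha\Bigl(\partial_t^4+\sum_{j=0}^3\sigma_{4-j}\partial_t^{j}\Bigr),
\]
a direct verification term by term using $\partial_t\alpha=\alpha$. Applied to $I_k$ and combined with $\partial_t\mathcal{L}_{\mathrm{hyp}}I_k=0$, this immediately gives
\[
\mathcal{L}_{\mathrm{hyp}}(\partial_t I_k)
=\alpha\,\partial_t^4 I_k+\alpha\sum_{j=0}^3\sigma_{4-j}\partial_t^{j}I_k.
\]

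The second step invokes the normalized Picard--Fuchs equation $\mathcal{L}I_k=0$, i.e.\ $\partial_t^4 I_k=\frac{\alpha}{1-\alpha}\sum_{j=0}^3\sigma_{4-j}\partial_t^{j}I_k$, which is equivalent to
\[
\alpha\sum_{j=0}^3\sigma_{4-j}\partial_t^{j}I_k=(1-\alpha)\,\partial_t^4 I_k.
\]
Substituting this into the previous display, the right-hand side collapses to $\alpha\,\partial_t^4 I_k+(1-\alpha)\partial_t^4 I_k=\partial_t^4 I_k$, which is the desired conclusion.

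There is no real obstacle; the only thing to watch is the bookkeeping of which $\alpha$-factors come from $\partial_t$ acting on the coefficient $(1-\alpha)$ versus on $-\alpha$. The conceptual content is simply that the ``defect'' between $\mathcal{L}_{\mathrm{hyp}}$ and $\mathcal{L}=(1-\alpha)^{-1}\mathcal{L}_{\mathrm{hyp}}$ is exactly measured by the commutator with $\partial_t$, and this defect is killed on the solution space of $\mathcal{L}$ up to the single term $\partial_t^4 I_k$.
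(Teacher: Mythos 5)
Your proposal is correct and is essentially the paper's own argument: both rest on the commutator computation using $\partial_t\alpha=\alpha$ and then invoke the Picard--Fuchs equation to finish. Your identity $[\mathcal{L}_{\mathrm{hyp}},\partial_t]=\alpha\bigl(\partial_t^4+\sum_{j}\sigma_{4-j}\partial_t^{j}\bigr)$ is the same as the paper's $[\partial_t,\mathcal{L}_{\mathrm{hyp}}]=-\partial_t^4+\mathcal{L}_{\mathrm{hyp}}$ written in expanded form, and the final substitution step is an equivalent rearrangement.
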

\begin{proof}
By definition,
\begin{equation*}
\mathcal{L}_{\mathrm{hyp}}\circ \partial_{t}I_{k}=[\mathcal{L}_{\mathrm{hyp}}, \partial_{t}]I_{k}+ \partial_{t}\circ  \mathcal{L}_{\mathrm{hyp}}I_{k}
=-[ \partial_{t}, \mathcal{L}_{\mathrm{hyp}}]I_{k}\,.
\end{equation*}
Now using \eqref{eqnPFoperatorL} one has
\begin{equation*}
[ \partial_{t}, \mathcal{L}_{\mathrm{hyp}}]=(\partial_{t}\beta)\partial_{t}^4+
\sum_{k=0}^3 (-\sigma_{4-k}\partial_{t}\alpha)\partial_{t}^{k}
=(\beta-1)\partial_{t}^4+
\sum_{k=0}^3 (-\sigma_{4-k} \alpha)\partial_{t}^{k}
=-\partial_{t}^4+\mathcal{L}_{\mathrm{hyp}}\,.
\end{equation*}
Therefore, one obtains
\begin{equation*}
\mathcal{L}_{\mathrm{hyp}}\circ \partial_{t}I_{k}=\partial_{t}^4 I_{k}-\mathcal{L}_{\mathrm{hyp}} I_{k}
=\partial_{t}^4 I_{k}\,.
\end{equation*}
\end{proof}

\begin{lemm}\label{lemI0L}
The following relation for the differential operators holds
\begin{equation*}
I_{0}\mathcal{L}_{\mathrm{hyp}}=\partial_{t}\circ (I_{0}\beta)\circ \mathcal{D}_{3}\circ \mathcal{D}_{2}\circ \mathcal{D}_{1}\,.
\end{equation*}
\end{lemm}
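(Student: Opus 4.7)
The plan is to reduce the claim to a single scalar identity $c_{4}=\partial_{t}\log(I_{0}\beta)$, and to derive this identity from the $b_{3}$-relation established in \eqref{eqnb3relation}.

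First, I would invoke the central identity \eqref{eqnmatchingtwooperators}, namely $\mathcal{L}=\mathcal{D}$. Since $\mathcal{L}=\beta^{-1}\mathcal{L}_{\mathrm{hyp}}$ and $\mathcal{D}=\mathcal{D}_{4}\circ\mathcal{D}_{3}\circ\mathcal{D}_{2}\circ\mathcal{D}_{1}$ by definition, multiplying on the left by $I_{0}\beta$ turns this into
\[
I_{0}\mathcal{L}_{\mathrm{hyp}}=I_{0}\beta\,\mathcal{D}_{4}\circ\mathcal{D}_{3}\circ\mathcal{D}_{2}\circ\mathcal{D}_{1}.
\]
Thus the lemma reduces to the operator identity
\[
I_{0}\beta\,\mathcal{D}_{4}=\partial_{t}\circ(I_{0}\beta),
\]
which, expanded on any test function $f$, becomes $I_{0}\beta(f'+c_{4}f)=(I_{0}\beta)'f+I_{0}\beta f'$, i.e.\ equivalent to the scalar identity
\[
c_{4}=\partial_{t}\log(I_{0}\beta)=B+\partial_{t}\log(1-\alpha)=B-C,
\]
using the notation $B=\partial_{t}\log I_{0}$ and $C=\alpha/(1-\alpha)$ from \eqref{eqnABCY} and \eqref{eqnCttt}.

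Next I would verify this scalar identity directly from the definitions in \eqref{eqnck}. Reading off $c_{4}$ gives $c_{4}=-Y-3A-B$ with $A,B,Y$ as in \eqref{eqnABCY}. The $b_{3}$-relation \eqref{eqnb3relation}, rewritten as $Y+2B+3A=\tfrac{1}{2}\sigma_{1}C$, together with $\sigma_{1}=2$ from \eqref{eqnsymmetricpolynomialsofroots}, yields
\[
-Y-3A-B=B-\tfrac{1}{2}\sigma_{1}C=B-C,
\]
which is exactly $\partial_{t}\log(I_{0}\beta)$. Combining the two steps gives the claim.

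There is no serious obstacle: the only nontrivial input is the $b_{3}$-relation, which has already been proved by matching the leading subprincipal symbols in $\mathcal{L}=\mathcal{D}$. The rest is bookkeeping with first-order differential operators. Note that the specific numerical value $\sigma_{1}=2$ for the quintic is what makes the factorization land exactly on $I_{0}\beta$ rather than on a more complicated weight; this is the arithmetic feature of the quintic that is being exploited.
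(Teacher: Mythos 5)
Your proposal is correct and follows essentially the same route as the paper: both arguments use the factorization $\mathcal{L}=\mathcal{D}$ from \eqref{eqnmatchingtwooperators} to reduce the claim to the single scalar identity $c_{4}=-Y-3A-B=B-C=\partial_{t}\log(I_{0}\beta)$, which is then read off from the $b_{3}$-relation \eqref{eqnb3relation} together with $\sigma_{1}=2$. Your closing observation that the specific value $\sigma_{1}=2$ is what makes the weight come out to exactly $I_{0}\beta$ is a nice aside, but the core argument is the paper's.
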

\begin{proof}
The relation 
\eqref{eqnL=D}
reads that
\begin{equation*}
\mathcal{L}_{\mathrm{hyp}}=\beta\mathcal{D}_{4}\circ \mathcal{D}_{3}\circ \mathcal{D}_{2}\circ \mathcal{D}_{1}\,,
\quad
\mathcal{D}_{4}=\partial_{t}+c_{4}
=\partial_{t}-Y-B-3A\,.
\end{equation*}
where the quantities $C,A,B,Y$ are as defined in \eqref{eqnCttt} and \eqref{eqnABCY}.
According to the $b_{3}$-relation in \eqref{eqnb3relation} and \eqref{eqnCttt}, we have
\begin{equation*}
-Y-B-3A=B-C=\partial_{t}\ln I_{0}-\partial_{t}\ln C_{ttt}
=\partial_{t}\ln (I_{0}  \beta)\,.
\end{equation*}
Therefore, we obtain
\begin{equation*}
\mathcal{D}_{4}=\partial_{t}+\partial_{t}\log (I_{0}\beta)\,.
\end{equation*}
It follows that
\begin{equation*}
(I_{0}\beta) \circ \mathcal{D}_{4}=\partial_{t}\circ (I_{0}\beta)\,.
\end{equation*}
According to 
\eqref{eqnL=D}, 
 we obtain
\begin{equation*}
I_{0}\mathcal{L}_{\mathrm{hyp}}=\partial_{t}\circ (I_{0}\beta)\circ \mathcal{D}_{3}\circ \mathcal{D}_{2}\circ \mathcal{D}_{1}\,.
\end{equation*}
\end{proof}

\begin{prop}\label{propantiderivativeI0Ik''''}
For any solution $I_{k},k=0,1,2,3$ to the Picard-Fuchs equation, one has
\begin{equation*}
\int  I_{0}\partial_{t}^4 I_{k}= (I_{0}\beta)\circ \mathcal{D}_{3}\circ \mathcal{D}_{2}\circ \mathcal{D}_{1}\circ \partial_{t}I_{k}+C_{k}\,,
\end{equation*}
for some constant $C_{k}$.
\end{prop}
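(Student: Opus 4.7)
The plan is to combine Lemma \ref{lemcommutingdifferentialoperators} and Lemma \ref{lemI0L} in a direct way. First I would apply Lemma \ref{lemcommutingdifferentialoperators} to rewrite the fourth derivative $\partial_t^4 I_k$ as $\mathcal{L}_{\mathrm{hyp}} \circ \partial_t I_k$, which uses the fact that $I_k$ is annihilated by $\mathcal{L}_{\mathrm{hyp}}$ and that the commutator $[\partial_t, \mathcal{L}_{\mathrm{hyp}}]$ equals $-\partial_t^4 + \mathcal{L}_{\mathrm{hyp}}$.

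Next I would multiply on the left by $I_0$ and apply Lemma \ref{lemI0L} to obtain
\begin{equation*}
I_0 \partial_t^4 I_k = I_0 \mathcal{L}_{\mathrm{hyp}} \circ \partial_t I_k = \partial_t \circ (I_0 \beta) \circ \mathcal{D}_3 \circ \mathcal{D}_2 \circ \mathcal{D}_1 \circ \partial_t I_k.
\end{equation*}
At this point the integrand on the left is exactly the $t$-derivative of the expression $(I_0 \beta) \circ \mathcal{D}_3 \circ \mathcal{D}_2 \circ \mathcal{D}_1 \circ \partial_t I_k$. Taking an antiderivative in $t$ yields the claimed identity, with $C_k$ arising as the constant of integration.

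There is really no obstacle here: the two preceding lemmas have already done the structural work, so the proof is essentially a one-line composition followed by integration. The only point worth noting is that the outer $\partial_t$ in Lemma \ref{lemI0L} is what makes the right-hand side an exact $t$-derivative, so that the antiderivative can be taken formally in $\mathbb{Q}[\ft,\ft^{-1},\ft'][[q]]$ without ambiguity beyond the additive constant $C_k$.
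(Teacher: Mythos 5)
Your proposal is correct and follows exactly the paper's own argument: the paper likewise combines Lemma \ref{lemcommutingdifferentialoperators} and Lemma \ref{lemI0L} to write $I_{0}\partial_{t}^{4}I_{k}=I_{0}\mathcal{L}_{\mathrm{hyp}}\circ\partial_{t}I_{k}=\partial_{t}\circ(I_{0}\beta)\circ\mathcal{D}_{3}\circ\mathcal{D}_{2}\circ\mathcal{D}_{1}\circ\partial_{t}I_{k}$ and then integrates. No issues.
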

\begin{proof}
Combining Lemma \ref{lemcommutingdifferentialoperators}
and Lemma \ref{lemI0L}, we obtain
\begin{equation*}
I_{0} \partial_{t}^4 I_{k}=I_{0}\mathcal{L}\circ \partial_{t}I_{k}=
\partial_{t}\circ (I_{0}\beta)\circ \mathcal{D}_{3}\circ \mathcal{D}_{2}\circ \mathcal{D}_{1}\circ \partial_{t}I_{k}\,.
\end{equation*}
The claim then follows.
\end{proof}

\begin{prop}\label{propantiderivativeT'I0Ik''''}
For any solution $I_{k},k=0,1,2,3$ to the Picard-Fuchs equation, one has
\begin{equation*}
\int T' \int  I_{0}\partial_{t}^4 I_{k}= (I_{0}\beta T')\circ  \mathcal{D}_{2}\circ \mathcal{D}_{1}\circ \partial_{t}I_{k}+C_{k}T+D_{k}\,,
\end{equation*}
for some constants $C_{k}, D_{k}$.
\end{prop}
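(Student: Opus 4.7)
My plan is to bootstrap from Proposition \ref{propantiderivativeI0Ik''''} and carry out exactly the same ``absorb the outermost $\mathcal{D}_j$ into a perfect derivative'' trick that was used to prove it, but now for the operator $\mathcal{D}_3$ instead of $\mathcal{D}_4$.

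First, by Proposition \ref{propantiderivativeI0Ik''''},
\begin{equation*}
\int I_0\,\partial_t^4 I_k \;=\; (I_0\beta)\circ \mathcal{D}_3\circ \mathcal{D}_2\circ \mathcal{D}_1\circ \partial_t I_k \;+\; C_k,
\end{equation*}
so multiplying by $T'$ and integrating gives
\begin{equation*}
\int T'\int I_0\,\partial_t^4 I_k \;=\; \int (I_0\beta T')\cdot \mathcal{D}_3\circ \mathcal{D}_2\circ \mathcal{D}_1\circ \partial_t I_k \;+\; C_k T.
\end{equation*}
The key step is to identify the coefficient $c_3$ in $\mathcal{D}_3=\partial_t+c_3$ as a logarithmic derivative of $I_0\beta T'$. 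From the definition in \eqref{eqnck}, $c_3=-Y-2A-B$. Using the $b_3$-relation \eqref{eqnb3relation}, namely $Y+2B+3A=\tfrac{\sigma_1}{2}C=C$, one eliminates $Y$ and finds $c_3=A+B-C$. Since $A=\partial_t\log T'$, $B=\partial_t\log I_0$, and $-C=\partial_t\log\beta$ (because $C_{ttt}=\beta^{-1}$), this collapses to
\begin{equation*}
c_3 \;=\; \partial_t\log(I_0\beta T').
\end{equation*}

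Consequently, as operators, $(I_0\beta T')\circ\mathcal{D}_3=\partial_t\circ(I_0\beta T')$, exactly mirroring the relation $(I_0\beta)\circ\mathcal{D}_4=\partial_t\circ(I_0\beta)$ used in the proof of Lemma \ref{lemI0L}. Therefore
\begin{equation*}
(I_0\beta T')\cdot \mathcal{D}_3\circ \mathcal{D}_2\circ \mathcal{D}_1\circ \partial_t I_k \;=\; \partial_t\!\left[(I_0\beta T')\cdot \mathcal{D}_2\circ \mathcal{D}_1\circ \partial_t I_k\right],
\end{equation*}
which integrates to $(I_0\beta T')\circ \mathcal{D}_2\circ \mathcal{D}_1\circ \partial_t I_k$ up to a constant $D_k$. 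Combining with the $C_kT$ term produced above yields the claimed identity.

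The only genuinely nontrivial step is the algebraic reduction $c_3=\partial_t\log(I_0\beta T')$, and that is forced by the $b_3$-relation already established in Appendix \ref{secdifferentialrelations}; no further hypergeometric identity is needed. Handling the indefinite integrals is routine provided one tracks the additive constants $C_k, D_k$ explicitly, which is harmless for our downstream use in Proposition \ref{lemmadesiredidentity}.
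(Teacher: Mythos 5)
Your proof is correct and is essentially identical to the paper's: the paper likewise derives the operator identity $(I_{0}\beta T')\circ\mathcal{D}_{3}=\partial_{t}\circ(I_{0}\beta T')$ (equivalently $c_{3}=\partial_{t}\log(I_{0}\beta T')$, which as you note rests on the $b_{3}$-relation \eqref{eqnb3relation}) and then concludes directly from Proposition \ref{propantiderivativeI0Ik''''}. Your verification of $c_{3}=A+B-C=\partial_{t}\log(I_{0}\beta T')$ is accurate and simply makes explicit a step the paper leaves implicit.
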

\begin{proof}
Since 
\begin{equation*}
I_{0}\beta T' \mathcal{D}_{3}=I_{0}\beta T'  \circ (\partial_{t}+\partial_{t}\log (\beta I_{0} T' ))
=\partial_{t}\circ (I_{0}\beta T')\,,
\end{equation*}
the claim then follows  from Proposition \ref{propantiderivativeI0Ik''''}.
\end{proof}

For simplicity in what follows we shall omit the integration constants, equipped with the known boundary conditions provided in \eqref{eqnboundaryconditionsforI0I1},\eqref{eqnspecialgeometry},\eqref{eqnprepotential}.\\

By integration by parts, one has
\begin{prop}\label{propantiderivativeIkI0''''}
One has
\begin{equation*}
\int I_{k}I_{0}'''' =\int J_{k} I_{0}I_{0}''''=J_{k} \int I_{0}I_{0}'''' 
-\int J_{k}' \int I_{0}I_{0}'''' \,,
\quad
J_{k}={I_{k}\over I_{0}}\,.
\end{equation*}
In particular, when $k=1$, this is Proposition \ref{propantiderivativeT'I0Ik''''}.
When $k=2$, one has
\begin{eqnarray*}
\int J_{k}' \int I_{0}I_{0}''''&=&\int F_{TT}\cdot T' \int I_{0}I_{0}'''' 
= F_{TT}\cdot  \int T' \int I_{0}I_{0}'''' \nonumber-\int F_{TT}' \int T' \int I_{0}I_{0}''''\\
&=&  
F_{TT}\cdot 
(T' I_{0}\beta )  \mathcal{D}_{2}\circ \mathcal{D}_{1}
\partial_{t}I_{0}
-
(F_{TTT}T'^2 I_{0}\beta) \mathcal{D}_{1}\partial_{t}I_{0}\,. \black
\end{eqnarray*}
\end{prop}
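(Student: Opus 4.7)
The plan is to establish the proposition purely by integration by parts, with the specializations at $k=1,2$ obtained by substituting the two explicit formulas for $J_k=I_k/I_0$ that come from the definition of $T$ and from the special geometry relation \eqref{eqnspecialgeometry}. The first equality $\int I_k I_0'''' = \int J_k I_0 I_0''''$ is tautological since $I_k=J_k I_0$. The second equality is a single integration by parts with $u=J_k$ and $dv=I_0 I_0''''\,dt$, so that $v=\int I_0 I_0''''\,dt$; this yields $J_k\int I_0 I_0'''' - \int J_k'\int I_0 I_0''''$. Throughout, integration constants are fixed by the asymptotic conditions \eqref{eqnboundaryconditionsforI0I1}, \eqref{eqnspecialgeometry}, \eqref{eqnprepotential}, as the paper already emphasizes.

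For the $k=1$ specialization, the definition $J_1=I_1/I_0=T$ gives $J_1'=T'$, so the last term $\int J_1'\int I_0 I_0''''$ becomes $\int T'\int I_0 I_0''''$, which is precisely the left-hand side of Proposition \ref{propantiderivativeT'I0Ik''''} with $k=0$; this immediately identifies the shape with that earlier proposition and produces the factor $(T' I_0\beta)\,\mathcal{D}_2 \mathcal{D}_1\,\partial_t I_0$.

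For $k=2$, the key input is the special geometry identity $J_2=I_2/I_0=F_T$ from \eqref{eqnspecialgeometry}, which gives $J_2'=F_{TT}\cdot T'$. A second integration by parts on $\int J_2'\int I_0 I_0''''$, with $u=F_{TT}$ and $dv=T'\int I_0 I_0''''\,dt$, produces $F_{TT}\cdot\int T'\int I_0 I_0'''' - \int F_{TT}'\int T'\int I_0 I_0''''$. The first summand is again evaluated by Proposition \ref{propantiderivativeT'I0Ik''''}. Using $F_{TT}'=F_{TTT}T'$ and Proposition \ref{propantiderivativeT'I0Ik''''}, the second summand reduces to the anti-derivative $\int F_{TTT}T'\cdot(I_0\beta T')\,\mathcal{D}_2 \mathcal{D}_1\,\partial_t I_0$, which is handled by the operator identity
\begin{equation*}
(F_{TTT} T'^2 I_0\beta)\,\mathcal{D}_2 \;=\; \partial_t\circ (F_{TTT} T'^2 I_0\beta).
\end{equation*}
Its verification reduces, after expanding $c_2=-\partial_t\log(T' I_0)$, to the statement that $F_{TTT}\,I_0^2\,T'^3\,\beta$ is a (nonzero) constant, which is precisely the content of the $b_3$-relation \eqref{eqnb3relation} proved in Appendix \ref{secdifferentialrelations}.

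The main obstacle is not conceptual but notational: keeping the three nested antiderivatives, the two integration-by-parts steps, and the peeled-off constants of integration straight. Once the operator identity above is in hand, no further analytic input beyond Propositions \ref{propantiderivativeI0Ik''''} and \ref{propantiderivativeT'I0Ik''''} and the special-geometry identification $J_2=F_T$ is needed, and the two cases follow in parallel from the single integration-by-parts template.
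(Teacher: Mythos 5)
Your proposal is correct and follows essentially the same route as the paper: the proposition is a tautology plus one (or two, for $k=2$) integrations by parts, with the $k=1,2$ cases specialized via $J_1=T$ and $J_2=F_T$ and the earlier anti-derivative propositions. The only substantive ingredient you add beyond the paper's terse ``by integration by parts'' is the explicit operator identity $(F_{TTT}T'^2 I_0\beta)\circ\mathcal{D}_2=\partial_t\circ(F_{TTT}T'^2 I_0\beta)$, which you correctly reduce to the constancy of $F_{TTT}I_0^2T'^3\beta$ from the $b_3$-relation \eqref{eqnb3relation} --- exactly the mechanism the paper uses implicitly (cf.\ the proofs of Propositions \ref{propantiderivativeI0Ik''''} and \ref{propantiderivativeT'I0Ik''''}).
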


\subsubsection{Anti-derivatives of degree four differential polynomials of periods}\label{diffrelation}

One can now obtain the anti-derivatives of degree four differential polynomials of periods.
We now compute some which shall be used later.
We have
\begin{equation}\label{eqnantiderivativeI0''I0''}
\int I_{0}''I_{0}''=I_{0}''I_{0}'-I_{0}'''I_{0}+\int I_{0}''''I_{0}
=I_{0}''I_{0}'-I_{0}'''I_{0}+( I_{0}\beta)\mathcal{D}_{3}\circ \mathcal{D}_{2}\circ \mathcal{D}_{1}\circ \partial_{t}I_{0}\,.
\end{equation}
\begin{equation} \label{eqnantiderivativeI1''I0''}
\int I_{1}''I_{0}''=I_{0}'I_{1}''-I_{0}I_{1}'''+\int I_{0}I_{1}''''
=I_{0}'I_{1}''-I_{0}I_{1}'''+( I_{0}\beta)\mathcal{D}_{3}\circ \mathcal{D}_{2}\circ \mathcal{D}_{1}\circ \partial_{t}I_{1}\,.
\end{equation}
For the anti-derivative of $I_{1}''I_{1}''$, one has
\begin{eqnarray}\label{eqnantiderivativeI1''I1''}
\int I_{1}''I_{1}''
&=&
I_{1}'' I_{1}'-I_{1}'''I_{1}
+\int 
T I_{1}''''I_{0}
=
I_{1}'' I_{1}'-I_{1}'''I_{1}+T \int I_{0}I_{1}''''  -  \int T'\int I_{0}I_{1}'''' \nonumber\\
&=&
I_{1}'' I_{1}'-I_{1}'''I_{1}+T ( I_{0}\beta)\mathcal{D}_{3}\circ \mathcal{D}_{2}\circ \mathcal{D}_{1}\circ \partial_{t}I_{1}- 
  (T' I_{0}\beta) \mathcal{D}_{2}\circ \mathcal{D}_{1}\circ \partial_{t}I_{1}\,.
\end{eqnarray}

\subsubsection{Relations arising from integration by parts}

There are sometimes multiple ways to compute the anti-derivatives. One can simpliy apply Proposition \ref{propantiderivativeI0Ik''''},
or alternatively combine Proposition \ref{propantiderivativeI0Ik''''} and Proposition \ref{propantiderivativeIkI0''''}.
For example, 
we have besides \eqref{eqnantiderivativeI1''I0''} the following
\begin{eqnarray*}
\int I_{1}''I_{0}''&=&I_{0}''I_{1}'-I_{0}'''I_{1}+\int I_{1}I_{0}'''' 
=I_{0}''I_{1}'-I_{0}'''I_{1}+\int T I_{0}I_{0}''''   \nonumber\\ 
&=&I_{0}''I_{1}'-I_{0}'''I_{1}+T  \int  I_{0}I_{0}''''-\int T' \int  I_{0}I_{0}''' \nonumber\\
&=&I_{0}''I_{1}'-I_{0}'''I_{1}+T (  I_{0}\beta)\mathcal{D}_{3}\circ \mathcal{D}_{2}\circ \mathcal{D}_{1}\circ \partial_{t}I_{0}
-(T' I_{0}\beta) \mathcal{D}_{2}\circ \mathcal{D}_{1}\circ \partial_{t}I_{0}\,.
\end{eqnarray*}
Taking the difference between these two equations, we have
\begin{eqnarray*}
&&I_{0}'I_{1}''-I_{0}I_{1}'''-(I_{0}''I_{1}'-I_{0}'''I_{1}) 
=\int ( I_{1}I_{0}''''-I_{1}''''I_{0}) \nonumber\\
&=&T(  I_{0}\beta)\mathcal{D}_{3}\circ \mathcal{D}_{2}\circ \mathcal{D}_{1}\circ \partial_{t}I_{0} -(T' I_{0}\beta) \mathcal{D}_{2}\circ \mathcal{D}_{1}\circ \partial_{t}I_{0}-(  I_{0}\beta)\mathcal{D}_{3}\circ \mathcal{D}_{2}\circ \mathcal{D}_{1}\circ \partial_{t}I_{1}\,.
\end{eqnarray*}
Similarly, in computing the anti-derivative of $I_{1}'''I_{0}'$, one can get the same
identity.
While directly computing the right hand side in the above seems complicated, the left hand side is easy.
That is, by integration by parts and the integration formulas in Proposition \ref{propantiderivativeI0Ik''''} and Proposition \ref{propantiderivativeIkI0''''}, we can easily produce nontrivial relations.

\subsection{Proof of an identity}

 The antiderivatives of $I_j^{(a)}I_k^{(b)}$'s in previous section admits an application in packaging loop contribution in $g=1$ MSP.  We shall prove the following identity.

\begin{prop} \label{lemmadesiredidentity}   In the following,  $I_j=I_j(y)$ in integrand and $'$ means derivative with respect to $y$. 
 \begin{align}   &   - \frac{1}{2} \   \partial_T J_2 \  \int_{-\infty}^t (I_1^{(2)}-TI_0^{(2)})^2dy  -     J_2 \Big(  \int_{-\infty}^t           (I_1^{(2)}-TI_0^{(2)}) I_0^{(2)} -  (I_1^{(3)}-TI_0^{(3)}) I_0^{(1)} dy    \Big)   \nonumber \\
 & -    ( \partial_T^{-1}  J_2)     \int_{-\infty}^t (I_0^{(2)})^2   -2    I_0^{(3)} I_0^{(1)}dy \nonumber  + \partial_t^{-2}\Big(  \big(I_3'''-tI_2'''  \big)'   I_0' +     I_2''' \big( I_1'-tI_0' \big)'  \Big)   -(T-t)\partial_t^{-1}(I_2'''I_0' )\nonumber \\
     &=\frac{1}{2}\frac{T''}{T'} + 2 \frac{I_0'}{I_0}+ \frac{\ln(1-5^5e^t)}{5}.  \nonumber \end{align}
\end{prop}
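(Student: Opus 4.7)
The strategy is to reduce the entire LHS to a rational expression in $I_0$, $T'$, and $\beta = 1-5^5 e^t$ via the Picard--Fuchs factorization $\mathcal{L} = \beta\,\mathcal{D}_4\mathcal{D}_3\mathcal{D}_2\mathcal{D}_1$ of Appendix B and the antiderivative formulas of Appendix C. Every integrand on the LHS is a quartic differential polynomial in the four periods $I_0, I_1, I_2, I_3$, and such a polynomial can always be written as a total $t$-derivative plus a linear combination of $I_0\,\partial_t^4 I_k$ for $k=0,1,2,3$; Proposition \ref{propantiderivativeI0Ik''''} then yields the antiderivative in closed form, while Proposition \ref{propantiderivativeT'I0Ik''''} handles the second-order integrals.

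The first concrete step is to unfold the special geometry relations $I_k = I_0 J_k$ with $(J_0,J_1,J_2,J_3) = (1, T, F_T, TF_T - 2F)$, so that all terms involving $I_2, I_3$ on the LHS are rewritten entirely in terms of $I_0, T, F_T, F$ and their $t$-derivatives. Here $\partial_T J_2 = F_{TT}$ and $\partial_T^{-1}J_2 = F$ up to the polynomial ambiguity fixed by \eqref{eqnprepotential}, which is tracked carefully. The Yukawa coupling identity $F_{TTT} = \beta^{-1} I_0^{-2} T'^{-3}$, which follows from \eqref{eqnb3relation} with $\sigma_1 = 2$, then converts every derivative of $F$ above order three into a rational function of $I_0, T', \beta$. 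For the terms involving $I_1$, the substitution $I_1 = T I_0$ gives $I_1^{(k)} - T I_0^{(k)} = \sum_{j<k}\binom{k}{j} T^{(k-j)} I_0^{(j)}$, so all integrands reduce to polynomials in $I_0^{(j)}$ together with $T^{(j)}$ and $F^{(j)}$.

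The second step is integration by parts, in the spirit of the sample computations \eqref{eqnantiderivativeI0''I0''}--\eqref{eqnantiderivativeI1''I1''}. For each quadratic monomial $I_0^{(a)} I_0^{(b)}$ with $a+b = 4$ (and similarly for mixed $I_0^{(a)} I_1^{(b)}$), one determines constants $\mu, \nu$ so that
\[
I_0^{(a)} I_0^{(b)} = \mu\,\partial_t\!\big(I_0^{(a-1)} I_0^{(b)}\big) + \nu\,\partial_t\!\big(I_0 I_0^{(3)}\big) + \lambda\, I_0\,I_0^{(4)},
\]
the coefficients being fixed by linear algebra. The remaining $\int I_0\,\partial_t^4 I_k$ are evaluated via Proposition \ref{propantiderivativeI0Ik''''} as $(I_0\beta)\mathcal{D}_3\mathcal{D}_2\mathcal{D}_1 \partial_t I_k$, which using the conjugation formulas $\mathcal{D}_1 = I_0 \partial_t I_0^{-1}$, $\mathcal{D}_2 = (T'I_0)\partial_t(T'I_0)^{-1}$, $\mathcal{D}_3 = (T'I_0\beta)\partial_t(T'I_0\beta)^{-1}$ reduce to rational expressions in $I_0, T', \beta$. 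The double integrals $\partial_t^{-2}(\cdots)$ and the $(T-t)\partial_t^{-1}(I_2''' I_0')$ term on the LHS are handled analogously, using Proposition \ref{propantiderivativeT'I0Ik''''} after one integration by parts against $T' = \partial_t T$.

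The hard part will be the bookkeeping: many boundary terms appear at intermediate stages, and one must verify that the non-polynomial contribution $\log\beta/5$ on the RHS is produced \emph{exactly}, with coefficient $1/5$ rather than, say, $\sigma_1/2 = 1$ (which would be the coefficient if one naively used only the $b_3$-relation). This specific $1/5$ traces back to the $-(T-t)\partial_t^{-1}(I_2''' I_0')$ term arising from $Cont^0$ in \S6, whose antiderivative involves $\log\beta$ through an intermediate integration of $\partial_t \log \beta = -C$. To organize the verification, one checks: (a) both sides vanish at $t \to -\infty$, where $I_0 \to 1$, $T \to t$, $F$ tends to the cubic polynomial in \eqref{eqnprepotential}, and $\beta \to 1$; and (b) the $t$-derivatives of both sides agree, which after the reductions above becomes a purely algebraic identity in $I_0^{(j)}$ and $T^{(j)}$ that is a consequence of the $b_0$-relation \eqref{eqnb0relation}, the flatness $b_2$-relation \eqref{eqnflatnessb2relation}, and the $b_3$-relation \eqref{eqnb3relation}.
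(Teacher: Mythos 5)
Your overall strategy --- differentiate both sides in $t$, reduce the single integrals with the antiderivative formulas of Propositions \ref{propantiderivativeI0Ik''''}, \ref{propantiderivativeT'I0Ik''''}, \ref{propantiderivativeIkI0''''} together with the conjugation identities for $\mathcal{D}_1,\mathcal{D}_2,\mathcal{D}_3$, and then match boundary conditions at $t\to-\infty$ --- is the same as the paper's. But there is a genuine gap in your treatment of the double integral $\partial_t^{-2}\bigl((I_3'''-tI_2''')'I_0'+I_2'''(I_1'-tI_0')'\bigr)$. After expanding, its integrand contains the \emph{fifth}-order monomials $I_3''''I_0'$ and $I_2'''I_1''$, which do not belong to the family $I_a^{(j)}I_b^{(4-j)}$ handled by your ``total derivative plus $\lambda\, I_0\partial_t^4 I_k$'' linear algebra; integrating them by parts down to $\int I_aI_b^{(5)}$ forces you through the Picard--Fuchs equation with its non-constant coefficients $a_k=-\sigma_{4-k}C$, and this is precisely where the arithmetic producing the $\tfrac15$ enters. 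The paper resolves this by observing that, modulo exact terms, $-I_3'''I_0''+I_2'''I_1''$ equals $-\tfrac12\bigl(u_6^{(03)}-u_6^{(12)}\bigr)$, a difference of Wronskians; the recursion \eqref{eqnrecursiveWronskian} together with the symplectic relation $M_{03}=M_{12}$ of \eqref{eqnsymplecticstructureWronskian} forces this difference to satisfy a first-order linear ODE whose solution is a constant multiple of $C'=C(C+1)$ with $C=5^5e^t/(1-5^5e^t)$, and the leading behaviour of the periods fixes the constant so that the anomalous term contributes exactly the $-\tfrac15 C$ in $\partial_t\,LHS$, i.e.\ the $\tfrac15\ln(1-5^5e^t)$ on the right-hand side. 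Your proposal omits this mechanism entirely and instead attributes the logarithm to the $-(T-t)\partial_t^{-1}(I_2'''I_0')$ term; but $\int I_2'''I_0'$ reduces by integration by parts and Proposition \ref{propantiderivativeIkI0''''} to expressions in $F_{TT}$, $F_{TTT}$, $I_0$, $T'$, $\beta$ with no logarithm, so that attribution is wrong and the key step of the proof is missing.

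A secondary point: your list of ingredients for step (b) --- the $b_0$-, $b_2$- and $b_3$-relations --- leaves out the symplectic $b_1$-relation, which is indispensable for the Wronskian argument above (it is what makes $u_1^{(03)}=u_1^{(12)}$ and hence collapses the exterior-square system to a solvable first-order equation for the difference $\Delta u_6$). Your reduction of the order-four pieces $(I_1''-TI_0'')^2$, $(I_1''-TI_0'')I_0''$, $(I_0'')^2$, etc.\ is sound and agrees with \eqref{eqnantiderivativeI0''I0''}--\eqref{eqnantiderivativeI1''I1''}, but without the Wronskian step the proof does not close.
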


 By \eqref{eqnspecialgeometry}, we have $J_{2}=I_{2}/I_{0}=F_{T}$ and   $\partial_{T}J_{2}=F_{TT} ,  \partial_{T}^{-1}J_{2}=F$.  By   \eqref{eqnCttt} and \eqref{eqnABCY},  the R.H.S. of  the above identity is  ${1\over 2}A+2B+{1\over 5}\ln C_{ttt}^{-1}$.

The rest of this section will be devoted to proving Proposition \ref{lemmadesiredidentity}.
In what follows, we denote the left and right hand sides of the identity in Proposition \ref{lemmadesiredidentity}
by $LHS, RHS$ respectively.

\subsubsection{First simplification: anomalous term}

We first focus on the anomalous term on the left hand side: 
\begin{eqnarray*}
&&\int\int (I_{3}'''(y)-t I_{2}'''(y))' I_{0}'(y)dy
+\int\int I_{2}'''(y) (I_{1}'(y)-t I_{0}'(y))' dy -(T-t)\int^t I_{2}'''(y)I_{0}'(y)dy \nonumber \\
&=&
\int\int (I_{3}''''(y)I_{0}'(y)+I_{2}'''(y)I_{1}''(y))dy
 -T\int^t I_{2}'''(y)I_{0}'(y)dy\,.
\end{eqnarray*}

By integration by parts, one has
\begin{equation*}
\int I_{3}''''I_{0}'=I_{3}'''I_{0}'-\int I_{3}''' I_{0}''\,.
\end{equation*}
This gives
\begin{eqnarray*}
&&\int\int (I_{3}''''(y)I_{0}'(y)+I_{2}'''(y)I_{1}''(y))dy
-T\int^t I_{2}'''(y)I_{0}'(y)dy  \nonumber\\
&=&\int\int (-I_{3}'''(y)I_{0}''(y)+I_{2}'''(y)I_{1}''(y))dy
+\int^t I_{3}'''(y)I_{0}'(y)dy-T\int^t I_{2}'''(y)I_{0}'(y)dy
\,.
\end{eqnarray*}

Now applying integration by parts again, we get
\begin{equation*}
\int I_{3}'''I_{0}''=I_{3}''I_{0}''-\int I_{3}'' I_{0}'''\,.
\end{equation*}
Hence
\begin{equation*}
\int I_{3}'''I_{0}''={1\over 2}I_{3}''I_{0}''+ {1\over 2}\int (I_{3}'''I_{0}'' -I_{3}'' I_{0}''')\,.
\end{equation*}
Similarly, 
\begin{equation*}
\int I_{2}'''I_{1}''={1\over 2}I_{2}''I_{1}''+ {1\over 2}\int (I_{2}'''I_{1}'' -I_{2}'' I_{1}''')\,.
\end{equation*}
Therefore,
\begin{equation*}
\int (-I_{3}'''I_{0}''+I_{2}'''I_{1}'')=-{1\over 2}I_{3}''I_{0}''+{1\over 2}I_{2}''I_{1}''- {1\over 2}\int (I_{3}'''I_{0}'' -I_{3}'' I_{0}''')+ {1\over 2}\int (I_{2}'''I_{1}'' -I_{2}'' I_{1}''')\,.
\end{equation*}\\

To compute 
\begin{equation*}
- {1\over 2} (I_{3}'''I_{0}'' -I_{3}'' I_{0}''')+ {1\over 2} (I_{2}'''I_{1}'' -I_{2}'' I_{1}''')
=-{1\over 2}u_{6}^{(03)}+{1\over 2} u_{6}^{(12)}\,,
\end{equation*}
we use the Wronskian method discussed in Section \ref{secsymplecticstructureviaWronskian} above.
Here $u_{6}^{(03)}$ means $u_{6}$ with $(a,b)=(0,3)$ and 
$u_{6}^{(12)}$ is similar.
We recall that 
the two $u_{1}$'s are equal,
hence so are
\begin{equation*}
u_{2}=u_{1}'\,,\quad u_{2}'=u_{3}+u_{4}
\end{equation*}
and 
\begin{equation*}
U=a_{3}u_{4}+2u_{5}\,,
\quad
C_{1}u_{4}+2a_{0}u_{1}+2u_{6}\,,
\end{equation*}
where
\begin{equation*}
C_{1}=a_{3}'+{1\over 2}a_{3}^2-2a_{2}={4\over 5}C\,.
\end{equation*}
The difference 
\begin{equation*}
\Delta u_{6}=u_{6}^{(03)}-u_{6}^{(12)}=M_{03}-M_{12}
\end{equation*}
can be found by 
using the Wronskian method as follows.
Observe that all of the equations involved are linear. To figure out $\Delta u_{6}$, we 
recall
\begin{equation*}
u_{6}'=a_{0}u_{2}+a_{1}u_{4}-a_{3}u_{6}\,.
\end{equation*}
This gives
\begin{equation*}
\Delta (C_{1}u_{4}+2a_{0}u_{1}+2u_{6})
=C_{1}\Delta u_{4}+2 \Delta u_{6}=0\,,
\end{equation*}
 and
 \begin{equation*}
\Delta (u_{6}'
-a_{0}u_{2}-a_{1}u_{4}+a_{3}u_{6})
=(\Delta u_{6})'-a_{1}(\Delta u_{4})
+a_{3}\Delta u_{6}=0\,.
\end{equation*}
 It follows that
 \begin{equation*}
( \Delta u_{6})'+{a_{1}} {2\over C_{1}}\Delta u_{6}+a_{3}\Delta u_{6}=0\,.
 \end{equation*}
 Plugging in
 $C_{1}={4\over 5}C, a_{1}=-{2\over 5}C, a_{3}=-2C$
, we obtain  
 \begin{equation*}
 \Delta u_{6}=k C'=kC(C+1)\,.
 \end{equation*}
 for some constant $k$.
 This constant can be fixed by looking at the boundary conditions
 of $- {1\over 2} (I_{3}'''I_{0}'' -I_{3}'' I_{0}''')+ {1\over 2} (I_{2}'''I_{1}'' -I_{2}'' I_{1}''')$
 obtained by the first few terms of the series
obeying \eqref{eqnspecialgeometry} and \eqref{eqnprepotential}.
This fixes $k=-\frac{1}{5}$.

\begin{rema}
To minimize the computations, we can also prove the identity as follows.
Note that the same reasoning as above tells us that
\begin{equation*}
\Delta u_{3}+\Delta u_{4}=0\,,
\end{equation*}
\begin{equation*}
(\Delta u_{3})'
=(\Delta u_{4})'-a_{3}\Delta u_{3}
=(-\Delta u_{3})'-a_{3}\Delta u_{3}\,.
\end{equation*}
Solving the latter differential equation, we see that 
$\Delta u_{3}=a / \beta$
for some constant $a$.
Then we get
$\Delta u_{4}=-a / \beta
$
and hence
\begin{equation*}
\Delta u_{6}=-{C_{1}\over 2} \Delta u_{4}= {C_{1}\over 2}{a\over \beta}
={2\over 5}C {a\over\beta}\,.
\end{equation*}
Hence the constant $k$ in the previous discussion is related to $a$ by $k={2\over 5}a$.
But now computing $\Delta u_{3}$ or $\Delta u_{4}$ is much easier:
one has
\begin{eqnarray*}
\Delta u_{3}&=& - {1\over 2} (I_{3}'''I_{0} -I_{3} I_{0}''')+ {1\over 2} (I_{2}'''I_{1} -I_{2} I_{1}''')\,,\\
\Delta u_{4}&=& - {1\over 2} (I_{3}''I_{0}' -I_{3}' I_{0}'')+ {1\over 2} (I_{2}''I_{1}' -I_{2}' I_{1}'').
\end{eqnarray*}
Due to the asymptotic behaviour of $1/\beta$, this amounts to computing the constant term.
We consider $\Delta u_{4}=-{a / \beta}=-a+\mathcal{O}(q)$ for simplicity.
Now knowing that the first term $I_{0}$ is $1$ from \eqref{eqnboundaryconditionsforI0I1} is enough to fix $-a={1/ 2}$.
This gives $k=-{1/5}$.
\end{rema}

Therefore, we obtain 
\begin{equation*}
\int (-I_{3}'''I_{0}''+I_{2}'''I_{1}'')=-{1\over 2}I_{3}''I_{0}''+{1\over 2}I_{2}''I_{1}'' -{1\over 5}C\,.
\end{equation*}
It follows then the anomalous term considered above is given by
\begin{eqnarray*}
&&\int (-{1\over 2}I_{3}''(y) I_{0}''(y) +{1\over 2}I_{2}''(y) I_{1}''(y))dy
+\int^t I_{3}'''(y)I_{0}'(y)dy-T\int^t I_{2}'''(y)I_{0}'(y)dy
\,.
\end{eqnarray*}

Now the left hand side of the desired identity in Proposition \ref{lemmadesiredidentity} is simplified into
\begin{eqnarray*}
LHS&=&
-{1\over 2}F_{TT}\int^{t} (I_{1}''(y)-T(t)  I_{0}''(y))^2 dy \nonumber\\
&&
-F_{T}\int^{t} (I_{1}''(y)-T(t)  I_{0}''(y))  I_{0}''(y)dy+F_{T}\int^t (I_{1}'''(y)-T(t)  I_{0}'''(y))I_{0}'(y)  dy \nonumber \\
&&
-F\int^t   I_{0}''(y)^2 dy+2F\int^t I_{0}'''(y)I_{0}'(y)dy \nonumber \\
&&+\int (-{1\over 2}I_{3}''I_{0}''+{1\over 2}I_{2}''I_{1}'')
' +\int^t I_{3}'''I_{0}'-T\int^t I_{2}'''I_{0}' +
\int -{1\over 5}C \nonumber \\
&=&
-{1\over 2}F_{TT}\int^{t} (I_{1}''(y)-T(t)  I_{0}''(y))^2 dy \nonumber\\
&&
-2 F_{T}\int^{t} (I_{1}''(y)-T(t)  I_{0}''(y))  I_{0}''(y)dy -  3F\int^t   I_{0}''(y)^2 dy\nonumber \\
&&
+F_{T} (I_{1}''I_{0}'- T I_{0}''I_{0}')+ 2FI_{0}''I_{0}' \nonumber \\
&&+\int (-{1\over 2}I_{3}''I_{0}''+{1\over 2}I_{2}''I_{1}'')
 +\int^t I_{3}'''I_{0}'-T\int^t I_{2}'''I_{0}'
+
\int -{1\over 5}C 
\,.
\end{eqnarray*}

\subsubsection{Taking the derivative}

We can apply Proposition \ref{propantiderivativeI0Ik''''}, Proposition \ref{propantiderivativeIkI0''''} and integral by parts
to simply the single integrals involving $I_{0}$ in the quantity $LHS$.
But the double integral seems to be out of reach. Both this and the $\log C_{ttt}^{-1}=\int C$ term on the right hand side
of the expected identity in  Proposition \ref{lemmadesiredidentity} suggest that one takes
the derivative of the identity.

The derivative is then computed to be
\begin{eqnarray*}
&&\partial_{t}LHS
=
-{1\over 2}F_{TTT}T'\int^{t} (I_{1}''(y)-T(t)  I_{0}''(y))^2 dy\\
&& - F_{TT}T' \int^{t} (I_{1}''(y)-T(t)  I_{0}''(y))  I_{0}''(y)dy 
-F_{T}T'\int^{t} I_{0}''(y)^2 dy
\nonumber\\
&&
-T'\int I_{2}''' I_{0}'
-{1\over 2}F_{TT} (I_{1}''-T I_{0}'')^2-F_{T} (I_{1}''-T I_{0}'')I_{0}''- F I_{0}''^2
+I_{0}' \left(F_{T} (I_{1}''I_{0}'- T I_{0}'') + 2FI_{0}'' \right)'\nonumber \\
&&+ (-{1\over 2}I_{3}''I_{0}''+{1\over 2}I_{2}''I_{1}'')
+I_{3}'''I_{0}'-T I_{2}'''I_{0}'
-{1\over 5}C
\,.
\end{eqnarray*}

After plugging in \eqref{eqnantiderivativeI0''I0''},\eqref{eqnantiderivativeI1''I0''},\eqref{eqnantiderivativeI1''I1''}, 
the first two terms above reduce to expressions only involving $\mathcal{D}_{2},\mathcal{D}_{1}$
thanks to Proposition \ref{propantiderivativeI0Ik''''}, Proposition \ref{propantiderivativeIkI0''''}:
\begin{eqnarray*} 
\int ( I_{1}''(y)-T(t) I_{0}''(y))^2 dy
&=& T' I_{0} (I_{1}''-T I_{0}'')
-
  (T' I_{0}\beta) \mathcal{D}_{2}\circ \mathcal{D}_{1}\circ \partial_{t}I_{1}
+ T  (T' I_{0}\beta) \mathcal{D}_{2}\circ \mathcal{D}_{1}\circ \partial_{t}I_{0}
\,,\\
\int ( I_{1}''(y)-T(t) I_{0}''(y))I_{0}''(y) dy
&=& I_{0}I_{0}''T' -   (T' I_{0}\beta )\mathcal{D}_{2}\circ \mathcal{D}_{1}\circ \partial_{t}I_{0}
\,.
\end{eqnarray*}

We also apply integration by parts, Proposition \ref{propantiderivativeI0Ik''''} and Proposition \ref{propantiderivativeIkI0''''}
to simplify the other integrals.
This then gives a differential polynomial in the generators $A,B,C$ displayed in \eqref{eqnABCY}.
Below are the details.

\begin{eqnarray*}
-F_{T}T'\int^{t} I_{0}''(s)^2 ds  
&=&-F_{T}T'( I_{0}'' I_{0}'-I_{0}'''I_{0} )-F_{T}T' (  I_{0}\beta)\mathcal{D}_{3}\circ \mathcal{D}_{2}\circ \mathcal{D}_{1}\circ \partial_{t}I_{0}\,,\\
-T'\int I_{2}''' I_{0}'  
&=&T' [-I_{2}'' I_{0}' +I_{2}'I_{0}''-I_{2}I_{0}'''+  F_{T}  (  I_{0}\beta)\mathcal{D}_{3}\circ \mathcal{D}_{2}\circ \mathcal{D}_{1}\circ \partial_{t}I_{0} \nonumber\\
&&
-F_{TT} (T'I_{0}\beta) \mathcal{D}_{2}\circ \mathcal{D}_{1}\circ \partial_{t}I_{0} +(F_{TTT}T'^2I_{0}\beta)    \mathcal{D}_{1}\circ \partial_{t}I_{0}   ]
\,.
\end{eqnarray*}

\subsubsection{Completion of the proof of Proposition \ref{lemmadesiredidentity}}

Plugging in all of the above expressions for the anti-derivatives, we obtain
 \begin{eqnarray*}
\partial_{t}LHS
&=&
-{1\over 2}F_{TTT}T'  
 T' I_{0} (I_{1}''-T I_{0}'')- F_{TT}T'      I_{0}I_{0}''T' 
-F_{T}T'( I_{0}'' I_{0}'-I_{0}'''I_{0} )\nonumber\\
&&
-{1\over 2}F_{TT} (I_{1}''-T I_{0}'')^2-F_{T} (I_{1}''-T I_{0}'')I_{0}''- F I_{0}''^2\nonumber\\
&&
+I_{0}'  F_{TT}T' (I_{1}''- T I_{0}'') +I_{0}' F_{T} (I_{1}''-TI_{0}'')'
+ 2F_{T} T' I_{0}'  I_{0}'' +2F I_{0}'  I_{0}'''\nonumber\\
&&
+T'  (I_{2}'''I_{0}-I_{2}'' I_{0}' +I_{2}'I_{0}''-I_{2}I_{0}''')+ (-{1\over 2}I_{3}''I_{0}''+{1\over 2}I_{2}''I_{1}'')
+(I_{3}'''I_{0}'- I_{2}'''I_{1}')
\nonumber\\
&&
+{1\over 2}F_{TTT}T'
  (T' I_{0}\beta) \mathcal{D}_{2}\circ \mathcal{D}_{1}\circ \partial_{t}I_{1}\nonumber\\
&&
-{1\over 2}F_{TTT}T' T  (T' I_{0}\beta) \mathcal{D}_{2}\circ \mathcal{D}_{1}\circ \partial_{t}I_{0}
  +T'(F_{TTT} T'^2I_{0}\beta)    \mathcal{D}_{1}\circ \partial_{t}I_{0}   -{1\over 5}C
\nonumber\\
 &=&0+{1\over 2}F_{TTT}T'
  (T' I_{0}\beta) \mathcal{D}_{2}\circ \mathcal{D}_{1}\circ \partial_{t}I_{1} -{1\over 2}F_{TTT}T' T  (T' I_{0}\beta) \mathcal{D}_{2}\circ \mathcal{D}_{1}\circ \partial_{t}I_{0}
 \nonumber \\
&& +T'(F_{TTT} T'^2I_{0}\beta)    \mathcal{D}_{1}\circ \partial_{t}I_{0} -{1\over 5}C  \,.
\end{eqnarray*}

Keeping simplifying, we obtain 
\begin{eqnarray*}
\partial_{t}LHS  
&=&
+{1\over 2}F_{TTT}T'
  (T' I_{0}\beta)  [T'''I_{0}+3 T'' I_{0}'+3 T' I_{0}''\nonumber\\
  &&
 -(A+2B) (T'' I_{0}+2T' I_{0}')
 +(AB+B^2-B') T' I_{0}  ]  \nonumber\\
&& +T'(F_{TTT} T'^2\beta)    (I_{0}'' I_{0}-I_{0}' I_{0}')-{1\over 5}C\nonumber\\
&=&({1\over 2}A'+B')
+B' -{1\over 5}C\,.
\end{eqnarray*}

This 
 is identical to
\begin{equation*}
\partial_{t}RHS=\partial_{t}
\left({1\over 2}A+2B+{1\over 5}\ln C_{ttt}^{-1}\right)={1\over 2}A'+2B'-{1\over 5}C\,.
\end{equation*}

The identity $LHS=RHS$
then follows from the above equation $\partial_{t}LHS=\partial_{t}RHS$,
and the fact that they have the same boundary conditions which follow from \eqref{eqnboundaryconditionsforI0I1},\eqref{eqnspecialgeometry} and \eqref{eqnprepotential}.

\end{appendices}

\end{document}